\documentclass[a4paper,cleveref, autoref, thm-restate]{lipics-v2021}
\graphicspath{ {./images/} }

\usepackage{amstext}
\usepackage{amsmath}
\usepackage{amssymb}
\usepackage{bm}
\usepackage{color}
\usepackage{colordvi}
\usepackage{comment}
\usepackage{dutchcal}
\usepackage{enumitem}
\usepackage{float}
\usepackage{graphicx}
\usepackage{graphics}
\usepackage{standalone}
\usepackage{mathtools}
\usepackage{soul}
\usepackage{svg}
\usepackage{tikz}
\usepackage{xcolor}
\usepackage{xspace}
\usepackage[ruled,vlined,noend]{algorithm2e}
\usepackage[textsize=tiny,backgroundcolor=white,linecolor=black] {todonotes} 

        {\hspace*{\fill}$\Box$\par\vspace{4mm}}

\newcommand{\X}{\mathbf{X}}
\newcommand{\Y}{\mathbf{Y}}

\newcommand{\xy}{z}
\newcommand{\fset}{{\mathcal F}}
\newcommand{\sset}{{\mathcal S}}
\newcommand{\dset}{{\mathcal D}}

\newcommand{\rset}{{\mathcal R}}
\newcommand{\cset}{{\mathcal C}}
\newcommand{\bset}{{\mathcal B}}
\newcommand{\pset}{{\mathcal P}}

\newcommand{\R}{\ensuremath{\mathbb R}}

\newcommand{\N}{\ensuremath{\mathbb N}}

\newcommand{\Ver}{\Y}
\newcommand{\Hor}{\X}
\newcommand{\ver}{\nu}
\newcommand{\hor}{\sigma}
\newcommand{\horver}{\xy}

\newcommand{\Se}{\sset}
\newcommand{\Bo}{\bset}

\DeclareMathOperator{\sr}{\sf{ SR}}
\DeclareMathOperator{\gig}{\sf{ GIG}}
\DeclareMathOperator{\ugig}{\sf{ UGIG}}
\DeclareMathOperator{\chain}{\sf{ CHAIN}}
\DeclareMathOperator{\conv}{\sf{ CONV}}
\DeclareMathOperator{\prig}{\sf{ PRIG}}

\DeclareMathOperator{\dir}{DIR}
\DeclareMathOperator{\posucc}{\sf{ succ}}

\newcommand{\dist}{\mathbin{\dot{\cup}}}
\newcommand{\Sum}{\displaystyle\sum}

\newcommand\rowincludegraphics[2][]{\raisebox{-0.45\height}{\includegraphics[#1]{#2}}}


\bibliographystyle{plainurl}

\title{On Geometric Bipartite Graphs with Asymptotically Smallest Zarankiewicz Numbers }

\titlerunning{On Geometric Bipartite Graphs with Asymptotically Smallest Zarankiewicz Numbers} 

\author{Parinya Chalermsook}{University of Sheffield, England}{chalermsook@gmail.com}{https://orcid.org/0009-0000-2833-0472}{}

\author{Ly Orgo}{Aalto University, Finland}{ly.orgo@aalto.fi}{https://orcid.org/0009-0008-8888-3835}{}

\author{Minoo Zarsav}{Aalto University, Finland}{minoo.zarsav@aalto.fi}{https://orcid.org/0009-0003-1768-3864}{}

\authorrunning{P. Chalermsook et al.}

\Copyright{Parinya Chalermsook and Ly Orgo and Minoo Zarsav}
\ccsdesc[500]{Theory of computation~Design and analysis of algorithms}  

\keywords{Bipartite graph classes, extremal graph theory, geometric intersection graphs, Zarankiewicz problem,  bicliques} 

\funding{This work was supported by the European Research Council (ERC) under the European Union’s Horizon 2020 research and innovation programme (grant agreement No 759557).}


\relatedversion{} 

\nolinenumbers 

\EventEditors{John Q. Open and Joan R. Access}
\EventNoEds{2}
\EventLongTitle{42nd Conference on Very Important Topics (CVIT 2016)}
\EventShortTitle{CVIT 2016}
\EventAcronym{CVIT}
\EventYear{2016}
\EventDate{December 24--27, 2016}
\EventLocation{Little Whinging, United Kingdom}
\EventLogo{}
\SeriesVolume{42}
\ArticleNo{23}
\hideLIPIcs
\begin{document}

\begin{titlepage}

\maketitle

\begin{abstract}
This paper considers the \textit{Zarankiewicz problem} in bipartite graphs with low-dimensional geometric representation (i.e., low Ferrers dimension). \footnote{Ferrers dimension, along with interval dimension and order dimension, is a standard dimensional concept in graphs. Please refer to the introduction for formal definition.}  
Let $Z(n;k)$ be the maximum number of edges in a bipartite graph with $n$ nodes and is free of a $k$-by-$k$ biclique. Note that $Z(n;k) \in \Omega(nk)$ for all ``natural'' graph classes. 
Our first result reveals a separation between bipartite graphs of Ferrers dimension three and four: while we show that $Z(n;k) \leq 9n(k-1)$ for graphs of Ferrers dimension three, $Z(n;k) \in \Omega\left(n k \cdot \frac{\log n}{\log \log n}\right)$ for Ferrers dimension four graphs (Chan \& Har-Peled, 2023) (Chazelle, 1990).
To complement this, we derive a tight upper bound of $2n(k-1)$ for chordal bipartite graphs and $54n(k-1)$ for grid intersection graphs (GIG), a prominent graph class residing in four Ferrers dimensions and capturing planar bipartite graphs as well as bipartite intersection graphs of rectangles. Previously, the best-known bound for GIG was $Z(n;k) \in O(2^{O(k)} n)$, implied by the results of Fox \& Pach (2006) and Mustafa \& Pach (2016). Our results advance and offer new insights into the interplay between Ferrers dimensions and extremal combinatorics.

\end{abstract}

\end{titlepage}

\setcounter{page}{1}

\section{Introduction}

Bipartite graphs are the most natural mathematical objects used to capture interrelations between two groups of interest. Therefore, it is unsurprising that they arise routinely in mathematical and algorithmic research communities. In extremal graph theory, bipartite graphs have been studied extensively from several perspectives.

The Zarankiewicz problem is perhaps among the oldest questions in extremal graph theory. In 1951, Zarankiewicz asked for the maximum number of edges in a bipartite graph with $n$ nodes on each side that does not contain a complete bipartite subgraph $K_{k,k}$. \footnote {Here, we are interested in a symmetric form of the Zarankiewicz problem. A more general question can involve graphs with different numbers of vertices on each side, that do not contain some biclique $K_{s,t}$.} Equivalently, given an $n$-by-$n$  matrix with 0/1 entries, that does not contain a $k$-by-$k$ all-one submatrix, what is the maximum number of $1$-entries in such a matrix? 
This question was partially answered by K\"{o}vari, S\'{o}s and Tur\'{a}n: If we denote such number by $Z(n; k)$, it was proven that $Z(n;k) \in O(n^{2-1/k})$~\cite{KST}, and the best known lower bound is $\widetilde{\Omega}(n^{2-2/(k+1)})$ shown  in~\cite{Bohman_2010}. Exact values are known for $k=2$, $k=3$, and closing the gap for $k \geq 4$ is one of the central open questions in extremal combinatorics. A trivial lower bound would be $n\cdot k$, achieved by $K_{k, n}$.

This question has also attracted significant interest in the context of specific graph classes, particularly those with additional structural constraints~\cite{chan2023number,mustafa2015zarankiewicz,fox2008separator}. Chan and Har-Peled, for instance, studied a wide range of geometrically defined bipartite graphs~\cite{chan2023number}, such as incidence graphs of points and rectangles, disks, or pseudodisks.

\subsection{Our contributions}

We consider the Zarankiewicz problem for geometric bipartite graphs. For ``natural'' graph classes, the lower bound of $Z(n;k) \in \Omega(n k)$ holds trivially.  Our results show that a nearly tight upper bound can be achieved for many natural graph classes.

To formalize the discussion, consider a bipartite intersection graph $G = (U \cup V, E)$, defined by two families of objects $\fset_U$ and $\fset_V$, where each vertex $u \in U$ (resp. $v \in V$) is associated with an object $O_u \in \fset_U$ (resp. $O_v \in \fset_V$); Let $\phi: u \mapsto O_u, v \mapsto O_v$ for $u \in U$ and $v \in V$ be the bijection between vertices and their representations. 
We say that $(\fset_U, \fset_V, \phi)$ is an \textbf{intersection representation} of $G$. Whenever the mapping $\phi$ is clear from the context, we omit it. 
\Cref{table:intersection_classes} shows bipartite intersection graphs relevant to this paper. 

\begin{table}[h]
\begin{tabular}{l|l|l|l|l}
\hline 
\textbf{Graph Classes} & \textbf{Family} $\fset_U$ & \textbf{Family} $\fset_V$ & domain & \textbf{Example} \\ \hline
    $\chain$ & rightward rays & points & $\mathbb{R}$ & \rowincludegraphics[scale=0.08]{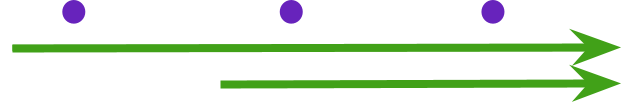} \\ \hline
    $\conv$ & intervals & points & $\mathbb{R}$ & \rowincludegraphics[scale=0.08]{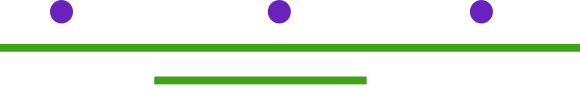} \\ \hline
    $\chain^2$ & rightward rays & upward rays & $\mathbb{R}^2$ & \rowincludegraphics[scale=0.16]{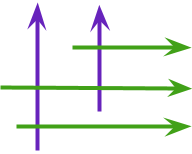} \\ \hline
    $\sr$ & horizontal segments & upward rays & $\mathbb{R}^2$ & \rowincludegraphics[scale=0.16]{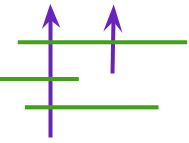} \\ \hline
    $\gig$ & horizontal segments &  vertical segments & $\mathbb{R}^2$ & \rowincludegraphics[scale=0.16]{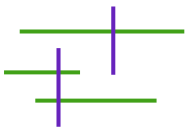} \\ \hline
    $\prig$ & axis-aligned rectangles & points & $\mathbb{R}^2$ & \rowincludegraphics[scale=0.16]{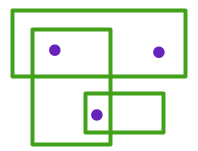} \\ \hline
\end{tabular}
\caption{Classes of bipartite intersection graphs that are relevant to this paper.}
\label{table:intersection_classes}
\vspace{-1em}    
\end{table}

For two graph classes $\cset_1 $ and $\cset_2 $, we use $\cset_1 \cdot \cset_2$ to denote the class of graphs, such that every graph $G \in \cset_1 \cdot \cset_2$ is the intersection of some graphs $ G_1 \in \cset_1$, $ G_2 \in \cset_2$, where intersection between graphs is defined as the intersection between their nodes and edges. When $\cset_1 = \cset_2$, we use the notation $\cset_1^2 = \cset_1 \cdot \cset_1$, and similarly for other powers.

The concept of Ferrers dimension is crucially based on \textbf{ chain graphs ($\chain$)}. 
A bipartite graph $G=(U\cup V, E)$ is in $\chain$ if it is an intersection graph of rays and points on the line~\cite{chaplick2014intersection}.  Ferrers dimension of a bipartite graph $G$, denoted by $\sf{fd}(G)$, is the smallest integer $d$, such that $G \in \chain^d$. It is known that every $n$-vertex graph has Ferrers dimension at most $n$.

We will use $Z_{\cset}(n,m; k, k)$ to denote the maximum number of edges in a bipartite graph $G = (U \cup V, E) \in \cset$, where $|U| = n$, $|V| = m$, such that the graph does not contain $K_{k, k}$ as a subgraph, and simply $Z_{\cset}(n; k)$, when $n = m$.

Our first main result provides a dichotomy between Ferrers dimension three and four.

\begin{restatable}{theorem}{dichotomyThm} [Dichotomy theorem] \label{thm:dichotomy}
The following dichotomy results hold for all $k\in \N$: 
\begin{itemize}
    \item 
    $Z_{\chain^3}(n;k) \leq 9n(k-1)$. 
    \item 
    $Z_{\chain^4}(n;k) \in \Omega(n k \cdot \frac{\log n}{\log \log n})$.  
\end{itemize}
\end{restatable}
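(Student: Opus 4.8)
The two directions are independent, so I sketch a plan for each.

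\textbf{Upper bound.} I would first put $\chain^3$ into a geometric normal form. A bipartite graph lies in $\chain$ precisely when its vertices can be assigned reals $a(u),b(v)$ with $uv\in E\iff a(u)\le b(v)$ (the staircase form of a chain graph), so $G\in\chain^3$ means there are three such coordinate pairs; negating coordinates makes all three inequalities ``$\le$'', and hence $G$ is a \emph{$3$-dimensional dominance bipartite graph}: $u\mapsto p_u\in\R^3$, $v\mapsto q_v\in\R^3$, with $uv\in E\iff p_u\preceq q_v$ coordinatewise (perturb so all coordinates are distinct). Here a $K_{k,k}$ is a family of $k$ red points whose coordinatewise maximum is dominated by the coordinatewise minimum of $k$ blue points. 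The plan is then an induction on the dimension $d\in\{1,2,3\}$, proving $Z_{\chain^d}(n;k)\le c_d\,(|U|+|V|)(k-1)$ with $c_1<c_2<c_3\le 9/2$. The case $d=1$ is the elementary bound for a single Ferrers matrix. For the inductive step I fix one coordinate, scan the vertices in that coordinate's order while maintaining the ``lower staircase'' of the red points already seen; when a blue vertex $q$ is scanned, its new red neighbours are exactly those it now dominates. Charge the $k-1$ of them that are extreme in a fixed secondary order to $q$ (and symmetrically run the dual scan charging to the $U$-side), which accounts for $O((k-1)(|U|+|V|))$ edges; the remaining ``excess'' edges should, after this charging, be controlled by the $(d-1)$-dimensional bound, since for each of them the scanned coordinate is automatically satisfied, while $K_{k,k}$-freeness is inherited because any biclique in the residual instance lifts to one of the same size in $G$. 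Summing over the two steps and tracking constants yields $c_3\le 9/2$, i.e. $Z_{\chain^3}(n;k)\le 9n(k-1)$.

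\textbf{Main obstacle (upper bound).} The delicate point is to perform this ``dimension reduction on the excess edges'' without incurring a spurious $\log n$ factor. A naive divide-and-conquer on the scanned coordinate also reduces cross-edges to a $\chain^{d-1}$ instance, but does so over $\Theta(\log n)$ recursion levels, and would only give $Z_{\chain^3}(n;k)\in O(nk\log n)$. The charging must therefore be arranged so that each vertex is ``merged'' exactly once — the residual instance living on essentially the same vertex set, one dimension lower, rather than on a recursively halved vertex set — and making the staircase/ordering bookkeeping cooperate with the $K_{k,k}$-free hypothesis, then pushing clean constants through two such steps, is where the real work lies. As a sanity check, the linear-in-$n$, linear-in-$k$ shape of the answer is already forced by the extremal ``slab'' configurations: points clustered near a common curve together with their small coordinatewise translates.

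\textbf{Lower bound.} Here I would reduce to a known construction rather than build a new one. Incidence graphs of points and axis-parallel rectangles in the plane have Ferrers dimension at most $4$: the membership $(x,y)\in[a,b]\times[c,d]$ is the conjunction of the four threshold conditions $a\le x$, $x\le b$, $c\le y$, $y\le d$, i.e. of four chain-graph relations. It therefore suffices to exhibit, for every $k$ and infinitely many $n$, a $K_{k,k}$-free incidence configuration of $n$ points and $n$ axis-parallel rectangles with $\Omega\!\big(nk\cdot\tfrac{\log n}{\log\log n}\big)$ incidences. This is exactly the hard instance underlying Chazelle's lower bound for planar orthogonal range searching (1990), recast by Chan and Har-Peled (2023) in Zarankiewicz form: a recursion of depth $\Theta(\log n/\log\log n)$ in which at each level the current point set is fanned out into $\approx\log n$ groups and rectangles are added so that the level contributes $\Omega(nk)$ incidences while no $k$ points ever lie in $k$ common rectangles. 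I would recall this construction and verify the three ingredients (incidence count, $K_{k,k}$-freeness, and that the resulting bipartite graph embeds into $\chain^4$), which together with the upper-bound half completes the dichotomy.
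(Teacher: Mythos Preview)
Your lower-bound half is correct and is essentially what the paper does: point--rectangle incidence graphs sit inside $\chain^4$ via the four threshold inequalities, Chazelle's construction gives a $K_{2,2}$-free instance with $\Omega(n\log n/\log\log n)$ incidences, and one boosts to general $k$ by duplicating each object $k-1$ times (the paper states this as a separate lemma rather than appealing to a ready-made $K_{k,k}$-free version, but the content is the same).

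Your upper-bound plan, however, has a genuine gap exactly where you flag it. You correctly normalise $\chain^3$ to $3$-dimensional bipartite dominance, and you correctly observe that a naive divide-and-conquer on one coordinate gives only $O(nk\log n)$. But your proposed fix --- ``charge the $k-1$ extreme neighbours in a secondary order, then the excess edges live in a single $\chain^{d-1}$ instance on the same vertex set because the scanned coordinate is automatically satisfied'' --- is not an argument. Nothing in the charging forces the scanned coordinate to become trivial on the uncharged edges; an excess edge $uv$ only tells you that there are $\ge k-1$ red points preceding $u$ in your secondary order that are also dominated by $v$, and this says nothing about collapsing a whole coordinate. Without a concrete mechanism here you are stuck at $O(nk\log n)$, and the constants $c_1<c_2<c_3\le 9/2$ you aim for have no recursive derivation behind them.

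The paper does \emph{not} proceed by dimensional induction. It first passes to an equivalent model for $\chain^3$: horizontal segments contained in bottomless rectangles (this is the same as your $3$-D dominance picture, with the two interval-endpoint coordinates and the height). On the segment side it then defines three explicit partial orders $\pset^{DL},\pset^{DR},\pset^{C}$ --- ``extends down-left'', ``extends down-right'', ``is nested inside'' --- and checks by case analysis that \emph{every} pair of segments is comparable in at least one of them. An edge $\{u,v\}$ is called \emph{bulky} of type $T\in\{DL,DR,C\}$ if the rectangle $\phi(v)$ contains at least $k-1$ $T$-successors of $\phi(u)$; a short argument shows that for each fixed $u$ there are at most $k-1$ bulky edges of each type (else the $k-1$ ``canonical'' $T$-successors of $\phi(u)$ together with $u$ and the $\ge k$ bulky rectangles form a $K_{k,k}$), so at most $3(k-1)|U|$ bulky edges in total. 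For the remaining \emph{thin} edges one fixes $v$, notes that each thin neighbour has at most $k-2$ successors of each type among the thin neighbours of $v$, and combines this with the ``every pair is comparable'' fact to get $\binom{|T|}{2}\le 3(k-2)|T|$, hence $|T|\le 6(k-1)$. Summing gives $(3|U|+6|V|)(k-1)\le 9n(k-1)$. The point is that the three orders together cover all pairs, which is what replaces your hoped-for single residual $\chain^{d-1}$ instance; this combinatorial covering is the idea your sketch is missing.
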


By using our upper bound for Ferrers dimension three as a base case, a result by Chan and Har-Peled~\cite{chan2023number} can be improved 
(proof in~\cref{sec:high dim}).
\begin{corollary}
\label{cor: high dim}
For graphs of Ferrers dimension $d \geq 3$, $k \in \N$, we have $Z_{\chain^d}(n;k) \leq O(n k \cdot \lceil\log n\rceil^{d-3})$.     
\end{corollary}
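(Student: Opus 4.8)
The plan is to prove \cref{cor: high dim} by induction on $d$, with the first bullet of \cref{thm:dichotomy} serving as the base case $d=3$ (where $\lceil\log n\rceil^{d-3}=1$ and $Z_{\chain^3}(n;k)\le 9n(k-1)$). It is convenient to prove a slightly stronger, asymmetric statement: that there is a constant $c_d$, depending only on $d$, with
\[
Z_{\chain^d}(n,m;k)\ \le\ c_d\,(n+m)(k-1)\,\lceil\log(n+m)\rceil^{\,d-3}.
\]
Specializing to $n=m$ and using $\lceil\log 2n\rceil=O(\lceil\log n\rceil)$ then yields the corollary. For the base case one takes the symmetric bound of \cref{thm:dichotomy} together with the trivial monotonicity $Z_{\chain^d}(n,m;k)\le Z_{\chain^d}(\max(n,m);k)$, valid because $\chain^d$ is closed under adding isolated vertices; likewise, throughout the argument one uses that $\chain^j$ is closed under induced subgraphs (immediate for $\chain$, and preserved by the $\cdot$ operation).

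For the inductive step, fix $d\ge 4$ and let $G=(U\cup V,E)\in\chain^d$ with $|U|=n$, $|V|=m$ and no $K_{k,k}$. Since $\chain^d=\chain\cdot\chain^{d-1}$, we may write $E=E_1\cap E_2$ with $G_1=(U\cup V,E_1)\in\chain$ and $G_2=(U\cup V,E_2)\in\chain^{d-1}$, and realize $G_1$ by rightward rays $R_u=[a_u,\infty)$ for $u\in U$ and points $p_v$ for $v\in V$, so that $uv\in E_1\iff a_u\le p_v$. Split $V$ into the set $V_1$ of the $\lfloor m/2\rfloor$ vertices of smallest coordinate and the remaining set $V_2$; put $s=\min_{v\in V_2}p_v$ and split $U$ into $U_1=\{u:a_u\le s\}$ and $U_2=\{u:a_u> s\}$. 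Then every pair in $U_1\times V_2$ lies in $E_1$, while no pair in $U_2\times V_1$ does. Consequently $E$ is partitioned into: the edges of the induced subgraphs $G[U_1\cup V_1]$ and $G[U_2\cup V_2]$ (each in $\chain^d$ and $K_{k,k}$-free), no edges between $U_2$ and $V_1$, and the edges between $U_1$ and $V_2$, which coincide with those of $G_2[U_1\cup V_2]\in\chain^{d-1}$ (an induced subgraph of $G$, hence $K_{k,k}$-free). This gives the recursion
\[
Z_{\chain^d}(n,m;k)\ \le\ Z_{\chain^d}(n_1,\lfloor m/2\rfloor;k)+Z_{\chain^d}(n_2,\lceil m/2\rceil;k)+Z_{\chain^{d-1}}(n_1,\lceil m/2\rceil;k),
\]
where $n_1+n_2=n$.

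To finish, I would unroll this recursion down to parts with $|V|\le 1$: the recursion tree has depth $\lceil\log_2 m\rceil$, the leaves together contribute at most $\sum_{\text{leaf }\ell}|U^\ell|\le n\le n(k-1)$ edges, and the $\chain^{d-1}$ terms at the internal nodes of one fixed level $j$ sum, by the inductive hypothesis applied to $d-1$, to at most $c_{d-1}(k-1)\lceil\log(n+m)\rceil^{\,d-4}$ times $\sum_{t}\bigl(|U_1^t|+|V_2^t|\bigr)$, which is $O(n+m)$ because the $U$-parts (and the $V$-parts) at level $j$ form a partition of $U$ (resp.\ $V$). Summing over the $O(\lceil\log(n+m)\rceil)$ levels and setting $c_d:=1+O(c_{d-1})$ closes the induction. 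The one delicate point — and the only real obstacle, though it is essentially bookkeeping — is that the recursion shrinks only the $V$-side while leaving $|U|$ untouched, so a naive "master-theorem" reading would overcount the cross terms; the remedy is precisely the level-by-level accounting above, which keeps the per-level cost at $O\bigl((n+m)k\lceil\log(n+m)\rceil^{\,d-4}\bigr)$ regardless of how many blocks a level contains. Minor care is also needed with the floors/ceilings in the halving and with the degenerate cases $k=1$ and $m\le 1$, none of which affects the asymptotics.
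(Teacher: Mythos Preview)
Your argument is correct. Both your proof and the paper's peel off one $\chain$ factor at the cost of a logarithmic factor, using the $d=3$ bound from \cref{thm:dichotomy} as the base case, but the decomposition differs. The paper follows Chan--Har-Peled's dyadic scheme: each ray is covered by at most $\lceil\log n\rceil$ canonical dyadic intervals, and for each such interval $\nu$ the induced subgraph on rays containing $\nu$ and points inside $\nu$ is a biclique in the $\chain$ coordinate, hence lies in $\chain^{d-1}$; summing over all $\nu$ and using that each vertex appears in $O(\log n)$ of them gives the bound in a single pass with no recursion-tree bookkeeping. Your recursive median split realises the same phenomenon locally---the $U_1\times V_2$ block is a biclique in the $\chain$ coordinate and drops to $\chain^{d-1}$---and your level-by-level accounting plays the role of the dyadic counting. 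The dyadic version is a bit more direct and plugs straight into the paper's asymmetric bound $Z_{\chain^3}(m,n;k,k)\le(3m+6n)(k-1)$ from \cref{thm:chain3}; your version is self-contained and avoids introducing dyadic ranges, at the price of the (routine) recursion-tree analysis you describe.
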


We complement the lower bound for $\chain^4$ by providing Zarankiewicz upper bounds for a prominent graph class in $\chain^4$.

\begin{restatable}{theorem}{introGIG} \label{thm:intro_GIG}
For all $k, n \in \N$, $Z_{\gig}(n;k) \leq 54n(k-1)$. 
\end{restatable}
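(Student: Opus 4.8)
The plan is to translate the geometric statement into a purely combinatorial one about $0/1$ matrices and then to play the interval structure on the two sides of a grid intersection graph against each other. By a left-to-right sweep one may assume (after a perturbation) that the $n$ vertical segments sit at $x$-coordinates $1,\dots,n$ and the $n$ horizontal segments at heights $1,\dots,n$; then $h_i$ crosses $w_j$ if and only if $j\in I_i$ and $i\in J_j$, where $I_i\subseteq\{1,\dots,n\}$ is the contiguous range of columns reached by $h_i$ and $J_j\subseteq\{1,\dots,n\}$ is the contiguous range of rows reached by $w_j$. Equivalently, the bipartite adjacency matrix of a GIG is the entrywise conjunction of a matrix whose rows are intervals of ones and a matrix whose columns are intervals of ones, and $K_{k,k}$-freeness says this matrix has no $k\times k$ all-ones submatrix (the submatrix need not be contiguous). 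The goal is then: such a matrix has at most $54n(k-1)$ ones.

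A first attempt at using \Cref{thm:dichotomy} --- note that $\gig$ is a subclass of $\chain^{4}$ and not of $\chain^{3}$ --- is to cover each row-interval $I_i$ by the $O(\log n)$ canonical dyadic intervals of a segment tree. Inside one dyadic block $\beta$, every row that uses $\beta$ spans $\beta$, so only the constraint $i\in J_j$ survives, a $\chain^{2}\subseteq\chain^{3}$ graph with at most $9\cdot(\text{size})\cdot(k-1)$ ones; summing over all blocks and all $O(\log n)$ levels gives only the bound of \Cref{cor: high dim}, i.e.\ $O(nk\log n)$, and it is exactly this logarithm that must be removed. To remove it I would instead partition the edge set of $G$ into a constant number of classes --- six would suffice for the stated constant --- each of which is, as a bipartite graph on the same vertex set, contained in $\chain^{3}$, and apply $Z_{\chain^{3}}(n;k)\le 9n(k-1)$ to each, giving $6\cdot 9n(k-1)=54n(k-1)$. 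Each class should be cut out by a case distinction local to a single crossing, arranged so that in each case two of the four inequalities $\ell_i\le j\le r_i$, $\ell'_j\le i\le r'_j$ defining the incidence relation collapse onto a common linear order, leaving only three independent orders. Such a collapse is available only because both $\fset_U$ and $\fset_V$ are families of \emph{segments}, so each free index is pinned between two finite endpoints; for a genuine $\chain^{4}$ graph one order comes from a one-sided ray and no such collapse exists, which is consistent with the matching lower bound $\Omega\!\left(nk\cdot\tfrac{\log n}{\log\log n}\right)$.

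The technical heart, and the step I expect to be the main obstacle, is designing this $O(1)$-way partition. $K_{k,k}$-freeness is a global property: it does not descend to the natural factors $G^{\uparrow},G^{\downarrow}\in\sr$ in the decomposition $E(G)=E(G^{\uparrow})\cap E(G^{\downarrow})$ obtained by extending each vertical segment into an upward and a downward ray, so one cannot simply apply a Zarankiewicz bound for $\sr$ to a factor; and any ``halve the column range and recurse'' scheme reintroduces the logarithm. The partition must therefore be chosen so that the three-order description of each class is faithful on the whole class, which forces the comparisons defining the classes to be between features internal to a single crossing rather than between a segment and an external grid. A possible alternative route is to prove instead that every $K_{k,k}$-free GIG is $O(k)$-degenerate and deduce the edge bound by peeling a low-degree vertex; the banded GIG in which $h_i$ reaches columns $[\,i-\tfrac{k-1}{2},\,i+\tfrac{k-1}{2}\,]$ and $w_j$ reaches rows $[\,j-\tfrac{k-1}{2},\,j+\tfrac{k-1}{2}\,]$ is $K_{k,k}$-free with every degree $\approx k-1$, so $O(k)$-degeneracy would be tight and, with a symmetric count over the two sides, would yield the claimed form. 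Once the partition (or the degeneracy bound) is in place, the rest --- checking that each class lies in $\chain^{3}$, and accounting carefully for the cases and for rows versus columns to reach the constant $54$ --- is routine.
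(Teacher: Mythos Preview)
Your write-up is a plan, not a proof: you explicitly identify ``the technical heart, and the step I expect to be the main obstacle'' as the construction of a six-way partition of $E(G)$ into $\chain^3$ subgraphs, and then do not construct it. The alternative route you mention --- $O(k)$-degeneracy --- is likewise stated but not proved. Since either of these \emph{is} the theorem, what remains after removing them is only the arithmetic $6\cdot 9n(k-1)=54n(k-1)$ or $27(k-1)\cdot 2n=54n(k-1)$.

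On the partition idea specifically: it is not clear that a purely local case distinction on a single crossing can force two of the four defining inequalities $\ell_i\le j$, $j\le r_i$, $\ell'_j\le i$, $i\le r'_j$ to ``collapse onto a common linear order.'' Any condition you impose on the pair $(i,j)$ that involves $\ell_i,r_i,\ell'_j,r'_j$ is itself a new order comparison, so replacing one of the four inequalities by a stronger local one still leaves four comparisons; getting down to three would require that one inequality become \emph{globally redundant} on the class, and you have not exhibited such a redundancy. This is exactly why the naive factorisation $G=G^{\uparrow}\cap G^{\downarrow}$ into two $\sr$ graphs fails, as you correctly note.

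The paper follows your ``alternative route'': it proves directly that every $K_{k,k}$-free grid intersection graph is $27(k-1)$-degenerate, and then applies the standard peeling bound. The degeneracy proof is not routine. It is a charging argument in which every horizontal segment is endowed with $27(k-1)$ credits and pays them out via two schemes: one (\Cref{alg:close_type}) pays $\tfrac{9}{2}$ credits to the $k-1$ leftmost and $k-1$ rightmost ``up-heavy'' and ``down-heavy'' vertical neighbours, and a second (\Cref{alg:estranged_type}) pays $\tfrac{9}{4}$ credits to certain non-neighbouring vertical segments. The analysis (\Cref{lem:payments_from_interval}, \Cref{lem:gig_box}) shows that each block of $3(k-1)$ consecutive horizontal neighbours of a vertical segment $\nu$ generates at least $\tfrac{9}{2}(k-1)$ credits for $\nu$, either directly or, when too few neighbours are ``generous,'' by exhibiting $k-1$ auxiliary horizontal segments outside $N(\nu)$ that are forced (by repeated use of $K_{k,k}$-freeness) to pay through the second scheme. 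Summing over blocks yields that any $\nu$ with $|N(\nu)|\ge 27(k-1)$ receives at least $|N(\nu)|$ credits, contradicting the budget if no low-degree vertex exists. None of this structure is visible from your outline; in particular the notion of up/down-heavy segments and the two-tier payment are the ideas that replace the missing partition.
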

This is our second main result. Grid intersection graph (GIG) is an intersection graph of horizontal and vertical segments in ${\mathbb R}^2$. This graph class contains all graphs of Ferrers dimension two and is known to be in $\chain^4$. GIG is rich, includes all planar bipartite graphs, and is equivalent to the bipartite intersection graphs of rectangles~\cite{bellantoni1993grid}. 
They have been studied from both structural and algorithmic perspectives~\cite{chaplick2017ferrers,kratochvil1989np,kratochvil1994special}. 
This result improves upon the $2^{O(k)} n$ upper bound that follows from~\cite{mustafa2015zarankiewicz,fox2010separator} (their results hold for more general intersection graphs of curves).\footnote{Note that Fox and Pach explicitly mentioned the term $2^{O(k)}$, while Mustafa and Pach's dependency on $k$ is somewhat hidden in their calculation. Looking at their calculations reveals that the constant is $2^{O(k)}$.}

Our final result applies to chordal bipartite graphs -- a bipartite graph is chordal if every cycle of length at least six contains a chord. 
Chordal bipartite graphs contain all graphs of Ferrers dimension at most two. On the other hand, it is relatively rich, having unbounded Ferrers dimension~\cite{chaplick2014intersection,chandran2011chordal}. 

\begin{restatable}{theorem}{introChordal} \label{thm:chordal-intro}
For all $k, n \in \N$, $Z_{\cset}(n;k) \leq 2n(k-1)$, where $\cset$ is a class of chordal bipartite graphs. 
\end{restatable}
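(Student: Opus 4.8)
The plan is to exploit the well-known perfect-elimination structure of chordal bipartite graphs. Recall that a bipartite graph is chordal bipartite if and only if it admits a \emph{perfect edge elimination ordering}: there is an ordering of the edges such that repeatedly the current edge is \emph{bisimplicial} (i.e.\ its endpoints' neighborhoods induce a complete bipartite graph among remaining vertices). More convenient here is the vertex version: every induced subgraph of a chordal bipartite graph contains a vertex $v$ whose neighborhood $N(v)$ is ``doubly simplicial'' in the sense that the neighborhoods of the vertices in $N(v)$ form a chain under inclusion. This is the analogue of a simplicial vertex in chordal graphs. I would set up the argument as an induction on $|U|+|V|$, peeling off such a structurally nice vertex and charging its incident edges.

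The key steps, in order: First, I would state and cite (or quickly prove) the structural lemma: in any chordal bipartite graph $G=(U\cup V,E)$ with at least one edge, there is a vertex $w$ (say in $V$) such that $\{N(u): u \in N(w)\}$ is linearly ordered by inclusion — equivalently $N(w)$ has a vertex $u^\star$ adjacent to everything that any neighbor of $w$ sees, making the bipartite graph induced on $N(w)\cup N(N(w))$ have a nested structure. Second, I would use the $K_{k,k}$-freeness to bound $\deg(w)$: if $\deg(w) \ge k$, pick the $k$ neighbors $u_1,\dots,u_k$ of $w$ with smallest neighborhoods; since the neighborhoods of all neighbors of $w$ are nested, and $w$ together with any other $k-1$ vertices of the chain... — more precisely, among the neighbors of $w$, order them $u_1,\dots,u_t$ so that $N(u_1)\subseteq\cdots\subseteq N(u_t)$; then $w\in N(u_i)$ for all $i$, and if $t \ge k$ then $\{u_{t-k+1},\dots,u_t\}$ together with any $k$ common neighbors would force a $K_{k,k}$ unless the number of common neighbors of $u_{t-k+1},\dots,u_t$ (which equals $|N(u_{t-k+1})| \ge \deg(w)$ since $w,\dots$) is at most $k-1$. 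I would make this precise to conclude that a vertex of this special type has degree at most... this needs care, so let me instead charge edges: delete $w$, and observe we remove $\deg(w)$ edges; to get the factor $2(k-1)$ I want an \emph{average} degree bound, so I should find a vertex of degree $\le 2(k-1)$ at each step. Third, I combine: if every induced chordal bipartite subgraph on $N'$ vertices has a vertex of degree at most $2(k-1)$, then $|E| \le 2(k-1)\cdot(|U|+|V|)/ \text{?}$ — recounting: summing $\deg(w) \le 2(k-1)$ over the $|U|+|V|$ deletion steps gives $|E| \le 2(k-1)(|U|+|V|) = 2(k-1)\cdot 2n = 4n(k-1)$, which is off by a factor $2$. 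So the sharper statement must be that the \emph{bisimplicial edge} elimination gives something better: each bisimplicial edge $uv$ has the property that $N(u)$ and $N(v)$ are both contained in small sets; I would instead argue there is always a vertex of degree at most $k-1$ in one of the two sides, or amortize so that each vertex is charged at most $2(k-1)$ total — giving $|E| \le 2(k-1)\cdot \max(|U|,|V|) \le 2(k-1)n$ when $|U|=|V|=n$; in the general unbalanced case one gets $Z_{\mathcal C}(n,m;k,k) \le 2(k-1)\max(n,m)$, which symmetrizes correctly.

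I expect the main obstacle to be \textbf{pinning down exactly which low-degree structure the perfect elimination scheme guarantees so that the constant is precisely $2$, not $4$}: naively deleting a simplicial-type vertex costs up to $2(k-1)$ edges per vertex over $n+m$ vertices and loses a factor of two. The fix is presumably to use the \emph{bisimplicial edge} ordering and charge each removed edge $uv$ so that $u$ pays only when it is the ``small-side'' endpoint — using that in a bisimplicial edge the two closed neighborhoods are comparable, so one of $N(u),N(v)$ is determined by the $K_{k,k}$-free condition to have size $\le k-1$ relative to the other — and then amortize the two unit charges of each edge onto its two endpoints, one of which (being on the ``lower'' end of a nested chain each time it is charged) is charged at most $k-1$ times in total. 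I would also need to handle degenerate cases (isolated vertices, graphs with no edges) and confirm tightness via an explicit construction (e.g.\ a disjoint-union / blow-up of complete bipartite pieces $K_{k-1,\,\cdot}$ arranged in a chain) to justify calling the bound tight as claimed in the introduction.
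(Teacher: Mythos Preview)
Your proposal has a genuine gap: you never actually establish the degree bound that yields the constant $2$, and the amortization fix you sketch is both unnecessary and not carried out. The missing observation is much simpler than what you attempt. From your own structural lemma --- a vertex $w$ whose neighbors $u_1,\dots,u_t$ satisfy $N(u_1)\subseteq\cdots\subseteq N(u_t)$ --- you can conclude $(k-1)$-degeneracy directly: if $t\le k-1$ then $\deg(w)\le k-1$; if $t\ge k$ then $u_1,\dots,u_k$ have common neighborhood exactly $N(u_1)$, and since $w\in N(u_1)$, having $|N(u_1)|\ge k$ would produce a $K_{k,k}$ on $\{u_1,\dots,u_k\}\cup N(u_1)$, so $\deg(u_1)\le k-1$. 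Either way some vertex has degree at most $k-1$, hence $|E|\le (k-1)(|U|+|V|)=2n(k-1)$ when $|U|=|V|=n$. No amortization, no bisimplicial-edge bookkeeping, no factor-of-two loss.

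The paper's proof is essentially this same $(k-1)$-degeneracy claim, but reached via a different (and arguably cleaner) structural input: it invokes the characterization that chordal bipartite graphs admit a row/column ordering of the biadjacency matrix avoiding the pattern $\begin{psmallmatrix}0&1\\1&1\end{psmallmatrix}$ (Farber; Klinz et al.). In that ordering one looks at the last column and the lowest $1$ in it; the forbidden pattern forces the submatrix on that row's support and that column's support to be all ones, so if both supports have size $\ge k$ one has a $K_{k,k}$. Your nested-neighborhood vertex is exactly what the last column of such a matrix gives you, so the two routes are really the same argument in different language --- you were one line away from finishing it.
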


We observe that $Z_{\chain}(n;k) \geq 2 n(k-1) - O(k^2)$ even for $\chain$ (see~\cref{lem: chain lower bound} in~\cref{sec: lower bound}), so our upper bound is tight up to an additive constant for fixed $k$.  
\Cref{thm:chordal-intro} improves upon the $4nk$ upper bound that holds for $\conv$~\cite{chan2023number} (since such graphs have Ferrers dimension at most two). 
Please refer to~\cref{tab:zaran} for a comprehensive list of our results and~\cref{fig:diagram} for a landscape of $Z_{\cset}(n;k)$, when $\cset$ is one of these graph chasses.

\begin{table}[h]
\begin{center}
 
\begin{tabular}{ |p{4.5cm}|p{1.6cm}|p{3.5cm}|  }
 \hline
\textbf{Graph classes}  & {\bf Our UB} & \textbf{Known UB}\\
 \hline
Chordal bipartite graphs      &$2n(k-1)$ & $3kn$ for a special case.~\cite{chan2023number} \\
Segment ray graphs (SR)     &$4n(k-1)$ & $O(kn)$~\cite{chan2023number} \\
$\chain^3$   & $9 n(k-1)$ & \\
Grid intersection graphs (GIG)  & $54 n(k-1)$ & $2^{O(k)}n$~\cite{fox2010separator,mustafa2015zarankiewicz}\\
 \hline
\end{tabular}
\caption{Zarankiewicz Bounds proved in this paper. 
Our bounds follow from \cref{thm:chordal-intro}, \cref{cor:sr}, \cref{cor:chain3_main}, and \cref{thm:intro_GIG} respectively} 
\label{tab:zaran}
\end{center}
\vspace{-1em}
\end{table}

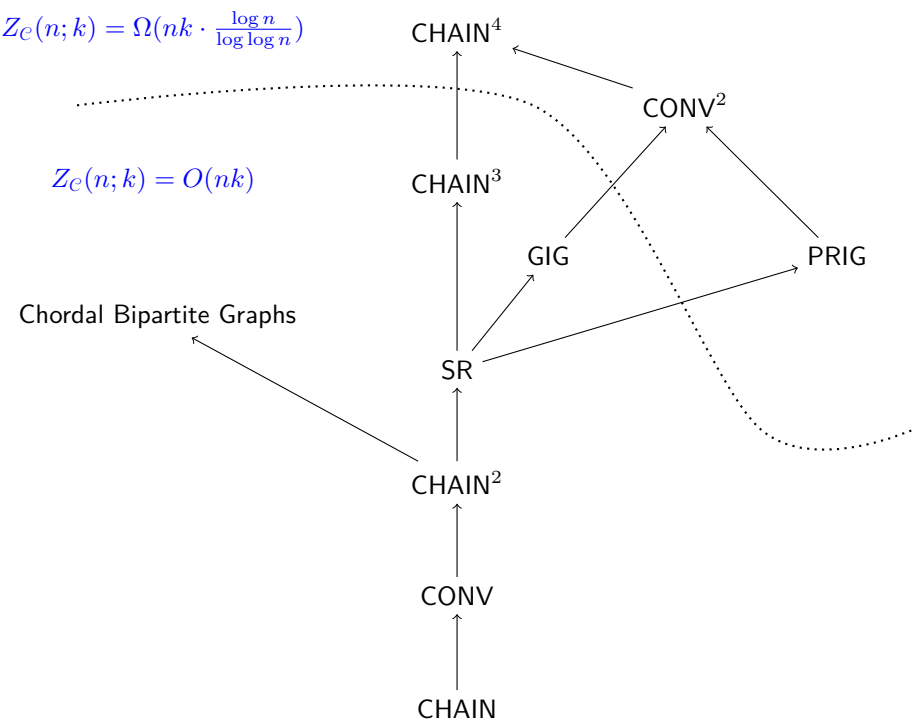
\begin{figure}[h!]
\centering
\begin{tikzpicture}

\node (C1) at (0,1) {$\chain$};
\node (IP) at (0,2.5) {$\conv$};
\node (C2) at (0,4) {$\chain^2$};
\node (SR) at (0,5.5) {$\sr$};
\node (C3) at (0,8) {$\chain^3$};
\node (C4) at (0,10) {$\chain^4$};
\node (CBG) at (-4,6.2) {\sf{ Chordal Bipartite Graphs}};
\node (GIG) at (1.2,7) {$\gig$};
\node (IP2) at (3,9) {$\conv^2$};
\node (RP) at (5,7) {$\prig$}; 


\node at (-4,10) {\textcolor{blue}{$Z_{\cset}(n;k) = \Omega (n k \cdot \frac{\log n}{\log \log n})$}};

\node at (-4,8) {\textcolor{blue}{$Z_{\cset}(n;k) = O(nk)$}};

\draw[->] (C1) -- (IP);
\draw[->] (IP) -- (C2);
\draw[->] (C2) -- (SR);
\draw[->] (SR) -- (C3);
\draw[->] (C3) -- (C4);
\draw[->] (C2) -- (CBG);
\draw[->] (SR) -- (GIG);
\draw[->] (GIG) -- (IP2);
\draw[->] (IP2) -- (C4);
\draw[->] (SR) -- (RP);
\draw[->] (RP) -- (IP2);

\draw[dotted, thick] plot [smooth] coordinates {(-5,9) (1,9)  (4,4.7) (6,4.7)};


\end{tikzpicture}
\caption{The relation between graph classes considered in this paper. All inclusions denoted by arrows are known to be proper. Our results imply the separation shown by the dotted curve. We show that $\chain^3$, $\gig$, and chordal bipartite graphs satisfy $Z_{\cset}(n;k) \in O(nk)$, which implies the same for the rest of the graph classes below them. \label{fig:diagram}}
\end{figure}

We remark that all our upper bounds are algorithmic in the sense that, given a graph and its geometric representation in class $\cset$, we give an efficient algorithm that reports a $k$-by-$k$ biclique whenever the number of edges exceeds our $Z_{\cset}(n;k)$ upper bound (e.g., given a grid intersection graph $\cset = \gig$ with more than $54 (k-1)n$ edges, our algorithm efficiently finds a $k$-by-$k$ biclique).

{\bf Comparison to known results:} Prior to our results, the upper bound $Z_{\cset}(n;k) \in O(nk)$ has been known for $\conv$, segments and rays\footnote{Chan and Har-Peled use the equivalent 3-sided rectangle and point intersection graphs.}, and halfspaces and points~\cite{chan2023number}. We improve a constant for $\conv$ (since it is a special case of chordal bipartite graphs) and generalize their result on the segment-ray graphs to grid intersection graphs.
The grid intersection graph is a special case of string graphs, so the upper bound~\cite{fox2010separator,mustafa2015zarankiewicz} of $Z_{\gig}(n;k) \in O(2^{O(k)} n)$ follows from their results on string graphs.
Please refer to~\cref{fig:diagram} for the relations between these classes.

{\bf Further related works:} The graph classes in this paper have received much attention from various perspectives, including recognition algorithms, optimization, and structures. For instance, chordal bipartite graphs have been considered in~\cite{lubiw1985doubly,golovach2016enumerating,muller1996hamiltonian,uehara2005graph}. For grid intersection graphs, the recognition problem is NP-complete~\cite{kratochvil1994special,kratochvil1989np}. The class is known to capture planar bipartite graphs~\cite{hartman1991grid} and is equivalent to the intersection graph of rectangles that are bipartite~\cite{bellantoni1993grid}.  
For more detailed literature on GIG, we refer the readers to an exposition in the PhD thesis of Mustata~\cite{mustata2014subclasses}. 

Zarankiewicz problems have been studied in many other special graphs, such as semi-algebraic graphs~\cite{fox2017semi, DBLP:journals/siamdm/Do19}, graphs of bounded VC dimension~\cite{janzer2024zarankiewicz}, graphs forbidding a fixed induced subgraph~\cite{bourneuf2023polynomial}, and in geometric intersection graphs \cite{keller2023zarankiewicz, DBLP:conf/compgeom/ChanKS25, DBLP:conf/soda/ChanH23, Tomon_Zakharov_2021, chan2024zarankiewicz}. We refer to the recent survey by Smorodinsky for a more comprehensive list of related works~\cite{smorodinsky2024survey}

\section{Preliminaries}
\label{sec:prelim}

We use standard graph-theoretic notation. A bipartite graph is denoted as $G= (U \cup V, E)$ where $U$ and $V$ are partitions of the vertices. For a graph $G$, we refer to its set of vertices as $V(G)$, its set of edges as $E(G)$, and its induced subgraph on $S \subseteq V(G)$ as $G[S]$.


The intersection of two graphs $G_1$ and $G_2$ is defined as $G_1 \cap G_2 = (V(G_1) \cap V(G_2), E(G_1) \cap E(G_2))$. When $G_1$ belongs in graph class $\cset_1$ and $G_2$ in class $\cset_2$, then we say that $G_1 \cap G_2$ belongs in $\cset_1 \cdot \cset_2$. For graph classes $\cset_1$ and $\cset_2$, that are closed under taking induced subgraphs, a graph $G \in \cset_1 \cdot \cset_2$ can be assumed to be the intersection of two graphs on the same vertex set, because if $G_1 \in \cset_1$, then $G_1[V(G_2)\cap V(G_1)] \in \cset_1$. Similarly for $G_2$.


Therefore, for two graph classes $\cset_1$ and $\cset_2$ that are closed under taking induced subgraphs, the intersection of the graph classes can be defined as:
\[\cset_1 \cdot \cset_2 = \{G_1 \cap G_2: V(G_1)= V(G_2), G_1 \in \cset_1, G_2 \in \cset_2\}\] 
One can naturally write $\cset^d = \cset \cdot \cset \ldots \cset$ ($d$ times).

Bicliques are the graphs $K_{a, b}$, where $a$ and $b$ are some positive integers. 

\begin{proposition}
If the graph class $\cset$ contains every biclique, then $\cset\subseteq \cset^2 \subseteq \ldots$. 
\end{proposition}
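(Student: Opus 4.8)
The plan is to prove the chain of inclusions link by link: I will show that $\cset^{d}\subseteq\cset^{d+1}$ for every $d\ge 1$, from which the statement $\cset\subseteq\cset^2\subseteq\cdots$ follows by composing the individual inclusions. Since $\cset^{d+1}=\cset^{d}\cdot\cset$ by definition, proving $\cset^{d}\subseteq\cset^{d+1}$ amounts to exhibiting, for an arbitrary $G\in\cset^{d}$, a decomposition $G=G_1\cap G_2$ with $G_1\in\cset^{d}$ and $G_2\in\cset$ on the same vertex set. Note that no induction is actually needed: the argument is uniform in $d$, using only the recursive definition of $\cset^{d+1}$.

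The key idea I would use is that a complete bipartite graph acts as a neutral element for the graph-intersection operation. Given a bipartite graph $G=(U\cup V,E)\in\cset^{d}$, I take $G_1:=G$ and let $G_2:=B$ be the biclique $K_{|U|,|V|}$ realized on the vertex set $U\cup V$ itself, i.e. with all $|U|\cdot|V|$ edges between $U$ and $V$. By the hypothesis of the proposition, $\cset$ contains every biclique, so $B\in\cset$. Moreover $V(B)=V(G)$, and every edge of $G$ joins $U$ to $V$ and is therefore also an edge of $B$, so $E(G)\subseteq E(B)$. Consequently $G\cap B=\bigl(V(G)\cap V(B),\,E(G)\cap E(B)\bigr)=\bigl(V(G),E(G)\bigr)=G$, and hence $G=G_1\cap G_2\in\cset^{d}\cdot\cset=\cset^{d+1}$, as desired.

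I do not expect a genuine obstacle: the argument relies only on the raw definition $G=G_1\cap G_2$ of the intersection of graph classes, so no closure assumptions on $\cset$ are needed beyond the stated one. The only points that warrant a word of care are (i) placing the neutral biclique on exactly the vertex set $V(G)$, so that the two vertex sets coincide inside the intersection, and (ii) the degenerate situation where $U$ or $V$ is empty; in that case $G$ is edgeless and one may simply re-choose the bipartition so that both parts are nonempty (the single-vertex case being trivial and, in any case, never forcing a new graph into $\cset^{d+1}$ that was not already in $\cset^{d}$).
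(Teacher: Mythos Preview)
Your proof is correct and follows exactly the same idea as the paper: intersect $G\in\cset^{d}$ with the biclique on its own vertex set, which lies in $\cset$ by hypothesis and leaves $G$ unchanged, yielding $G\in\cset^{d+1}$. The extra remarks about degenerate bipartitions are harmless additions not needed for the argument.
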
 
\begin{proof}
Here, we show that for any positive integer $i$,  $\cset^{i} \subseteq \cset^{i+1}$. Let $G = (U \cup V, E) \in \cset^i$. Then take $H$ being the complete bipartite graph on $U$ and $V$ (where the edges connect vertices in $U$ and $V$). Then  $H \in \cset$. Therefore, $G = G\cap H \in \cset^{i+1}$.      
\end{proof}

Since $\chain$ contains every biclique, this implies that $\chain \subseteq \chain^2 \subseteq \ldots$. It is known that $\chain^2$ is equivalent to the class of all two-directional orthogonal ray graphs (2DOR) \cite[Theorem 2.1]{chaplick2014intersection}.



A bipartite graph $G= (U \cup V, E)$ is \textbf{convex} over $V$ if 
there exists an ordering $V = \{v_1, \ldots v_{|V|}\}$ such that for all $u \in U$, the vertices $N(u)$ are contiguous ($\{v_i, \ldots v_{j}\}$ for some $i, j \in [|V|]$). We call the set of such graphs $\conv$. $\conv$ is known to be equivalent to a containment graph of points and intervals in $\R$. 
An interesting graph class worth mentioning is $\conv^2$ -- the intersection of two convex graphs. 

\begin{theorem} \label{thm:conv2}
    A bipartite graph $G = (U \cup V, E)$ is in $\conv^2$ if and only if $G$ is a disjoint union of $\gig$ and $\prig$ graphs.
\end{theorem}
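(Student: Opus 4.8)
The plan is to prove the two directions separately. For the easier ``if'' direction, I would show that both $\gig$ and $\prig$ lie in $\conv^2$, and then argue that $\conv^2$ is closed under disjoint union (the latter being routine: given intersection representations of the two pieces, place them far apart along both coordinate axes, so that no spurious intersections are created; one must only check that convex orderings concatenate). That $\gig \subseteq \conv^2$ should follow by taking, for a grid intersection representation, the two ``axis projections'': a horizontal segment and a vertical segment intersect iff they overlap in the $x$-coordinate \emph{and} overlap in the $y$-coordinate, and each of these two conditions is an interval-containment (hence convex) condition once we think of each $U$-vertex as an interval and each $V$-vertex as a point, or vice versa. Similarly $\prig \subseteq \conv^2$: a rectangle contains a point iff the point's $x$-coordinate lies in the rectangle's $x$-interval and the $y$-coordinate lies in its $y$-interval, again a conjunction of two convexity constraints. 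So $G_1$ captures the $x$-constraint and $G_2$ the $y$-constraint.

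For the ``only if'' direction, suppose $G = G_1 \cap G_2$ with $G_1, G_2 \in \conv$ on the common vertex set $U \cup V$. Using the point–interval characterization of $\conv$, for each $i \in \{1,2\}$ we get an assignment of a point $p^i_v \in \R$ to each $v \in V$ and a closed interval $I^i_u \subseteq \R$ to each $u \in U$, such that $uv \in E(G_i) \iff p^i_v \in I^i_u$. Then $uv \in E(G)$ iff $p^1_v \in I^1_u$ \emph{and} $p^2_v \in I^2_u$. Mapping $v \mapsto (p^1_v, p^2_v) \in \R^2$ and $u \mapsto I^1_u \times I^2_u$ (an axis-aligned rectangle), this says exactly that $G$ is the point–rectangle incidence graph on this configuration, i.e.\ $G \in \prig$. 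The remaining work is to explain why a $\prig$ graph is a disjoint union of $\gig$ graphs (and degenerate pieces): the key structural fact is that a connected $\prig$ graph, once each rectangle is shrunk minimally to still cover its points, either has all its rectangles degenerate to horizontal/vertical segments (giving a $\gig$) or, if some rectangle is genuinely two-dimensional but the graph is connected, one can still re-represent it; I would split $G$ into connected components and, within a component, distinguish the case where the component already admits a grid representation from small exceptional components. Here I expect the cleanest route is: every $\prig$ graph whose every connected component contains a cycle (or is suitably ``generic'') is a $\gig$, while acyclic components can be realized either way, so the union decomposition follows — a connected bipartite $\prig$ graph is either itself a $\gig$ or a $\prig$ that is not a $\gig$, and these two subclasses together generate, under disjoint union, exactly $\conv^2$.

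The main obstacle will be the structural statement in the ``only if'' direction: passing from ``$G$ is a point–rectangle incidence graph'' to ``$G$ is a disjoint union of $\gig$ and $\prig$ graphs'' is not immediate, because a single connected $\prig$ graph need not individually be a $\gig$, yet the theorem asserts the decomposition is into pieces each of which is one \emph{or} the other. I would handle this by working component-by-component on $G$ and showing that each connected component of a $\prig$ graph is itself either a $\gig$ graph or a $\prig$ graph — which is trivially true as stated, so the real content is that $\conv^2$ is \emph{exactly} (not just contained in) the disjoint unions of these, requiring both the containment $\gig \cup \prig \subseteq \conv^2$ from the first paragraph and closure of $\conv^2$ under disjoint union. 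I would double-check the edge cases (isolated vertices, empty graphs, $K_{a,b}$) to make sure the characterization is literally an ``if and only if''.
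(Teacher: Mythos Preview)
Your ``if'' direction is correct and matches the paper: show $\gig,\prig\subseteq\conv^2$ via axis projections, and check that $\conv^2$ is closed under disjoint union.

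The ``only if'' direction, however, has a genuine gap. You write ``for each $i\in\{1,2\}$ we get an assignment of a point $p^i_v$ to each $v\in V$ and an interval $I^i_u$ to each $u\in U$'', thereby assuming that \emph{both} $G_1$ and $G_2$ are convex with points on the $V$ side and intervals on the $U$ side. But the definition of $\conv^2$ only guarantees $V(G_1)=V(G_2)$; it does \emph{not} force the two bipartitions (and hence the point/interval roles) to coincide. This mismatch is exactly where the $\gig$ piece comes from, and your argument suppresses it entirely. The paper's proof refines the common vertex set into four parts according to how the two bipartitions cross: $A_1=U_1\cap U_2$, $B_1=V_1\cap V_2$, $A_2=U_1\cap V_2$, $B_2=V_1\cap U_2$. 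There are no $G$-edges between $\{A_1,B_1\}$ and $\{A_2,B_2\}$ (any such edge would lie inside one side of some $G_i$), and then $G[A_1\cup B_1]\in\prig$ (interval$\times$interval versus point$\times$point) while $G[A_2\cup B_2]\in\gig$ (interval$\times$point versus point$\times$interval). That is the whole decomposition.

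Your attempted repair---further decomposing an already-obtained $\prig$ graph into $\gig$ pieces---is both unnecessary and false. It is unnecessary because a $\prig$ graph is already a legal piece in ``disjoint union of $\gig$ and $\prig$ graphs''. It is false because connected $\prig$ graphs need not be $\gig$: the $\Omega\!\left(nk\cdot\frac{\log n}{\log\log n}\right)$ lower-bound constructions for $\chain^4$ are connected $\prig$ graphs, whereas every $\gig$ graph satisfies the $O(nk)$ Zarankiewicz bound. So the statement ``every $\prig$ graph whose every connected component contains a cycle is a $\gig$'' cannot hold. The fix is not to push harder on that route, but to go back and drop the assumption that the two convex representations share the same point/interval assignment.
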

The proof can be found in \cref{sec:conv2}.


Finally, a bipartite graph is a \textbf{chordal bipartite graph} if every induced cycle of length at least six contains a chord. It is known that chordal bipartite graphs can have unbounded Ferrers dimension~\cite{chaplick2014intersection,chandran2011chordal}.

\section{Warm-up}
\label{sec:warm-up}

Recall that a graph $G$ is $d$-\textbf{degenerate} if every induced subgraph has a vertex of degree at most $d$. All graph classes considered in this paper are closed under taking induced subgraphs.

\begin{observation}
\label{obs: degenrate counting}
A $d$-degenerate graph $G$ has at most $d \cdot |V(G)|$ edges.
\end{observation}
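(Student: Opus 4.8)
The plan is to prove this by a straightforward induction on $|V(G)|$, using the hereditary nature of degeneracy. The base case $|V(G)| = 0$ (or $|V(G)| = 1$) is immediate, since such a graph has no edges and $0 \leq d \cdot |V(G)|$.

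For the inductive step, suppose the claim holds for all $d$-degenerate graphs on fewer than $n$ vertices, and let $G$ be $d$-degenerate with $|V(G)| = n \geq 1$. By definition of $d$-degeneracy applied to $G$ itself (viewed as an induced subgraph of itself), there is a vertex $v \in V(G)$ with $\deg_G(v) \leq d$. Consider $G - v = G[V(G) \setminus \{v\}]$. Every induced subgraph of $G - v$ is also an induced subgraph of $G$, hence contains a vertex of degree at most $d$; therefore $G - v$ is again $d$-degenerate, and it has $n - 1$ vertices. By the induction hypothesis, $|E(G-v)| \leq d(n-1)$. Since deleting $v$ removes exactly $\deg_G(v) \leq d$ edges, we get $|E(G)| \leq |E(G-v)| + d \leq d(n-1) + d = dn = d \cdot |V(G)|$, completing the induction.

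There is no real obstacle here; the only point that needs a line of justification is that $G - v$ remains $d$-degenerate, which is exactly why the definition is phrased in terms of \emph{every} induced subgraph rather than just $G$. (An equivalent one-shot argument: iteratively deleting a vertex of degree $\leq d$ yields an ordering $v_1, \dots, v_n$ of $V(G)$ in which each $v_i$ has at most $d$ neighbors among $\{v_1, \dots, v_{i-1}\}$; charging each edge to its higher-indexed endpoint shows $|E(G)| \leq dn$.) I would include whichever phrasing is most consistent with how degeneracy orderings are used later in the paper.
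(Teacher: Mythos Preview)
Your proof is correct and is the standard argument; the paper itself does not supply any proof for this observation, treating it as well-known. Either phrasing you gave (the induction or the degeneracy ordering with the charging argument) would be an appropriate justification to include.
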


\subsection{Chordal bipartite graphs} \label{sec:chordal_proof}

For a bipartite graph $G$, denote its biadjacency matrix by $M_G$. 
We say that a matrix $M$ \textbf{contains} submatrix $P$ if we can obtain $P$ by removing some rows and columns of $M$. A matrix $M$ is $P$-\textbf{free} if it does not contain submatrix $P$. A bipartite graph $G$ is $P$-\textbf{freeable}, if there exists an ordering of rows and columns, such that the biadjacency matrix $M_G$ is $P$-free.

The following structural result is known for chordal bipartite graphs. 

\begin{theorem}[\cite{klinz1995permuting, DBLP:journals/dm/Farber83}]
\label{thm: chordal freeable}
Every chordal bipartite graph is  $\begin{psmallmatrix}
0 & 1 \\
1 & 1 
\end{psmallmatrix}$-freeable. 
\end{theorem}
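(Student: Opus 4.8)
The plan is to exhibit the required row/column ordering via a \emph{doubly lexical ordering} of the biadjacency matrix $M_G$, and then to argue that chordal bipartiteness forces every such ordering to avoid the pattern $\Gamma := \begin{psmallmatrix} 0 & 1 \\ 1 & 1 \end{psmallmatrix}$. Recall that an ordering of the rows and columns of a $0/1$ matrix is doubly lexical if, simultaneously, the rows read left-to-right are sorted in nondecreasing lexicographic order from top to bottom and the columns read top-to-bottom are sorted in nondecreasing lexicographic order from left to right (under a fixed canonical tie-breaking convention). The starting point is the classical fact, due to Lubiw and (algorithmically) Paige--Tarjan, that \emph{every} $0/1$ matrix admits a doubly lexical ordering. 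I would invoke this as a black box to fix an ordering of $U$ (the rows) and $V$ (the columns), so that the remaining task is purely to show that this ordering is $\Gamma$-free whenever $G$ is chordal bipartite.

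The core is a proof by contradiction. Suppose that, in the doubly lexical ordering, the pattern $\Gamma$ occurs at rows $u < u'$ and columns $v < v'$; that is, $uv$ is a non-edge while $uv'$, $u'v$, and $u'v'$ are edges. These four vertices already induce a path $v\,u'\,v'\,u$ (a $P_4$) whose endpoints $u$ and $v$ are non-adjacent, the only possible chord $uv$ being absent. A $P_4$ by itself is harmless, so the goal is to use the extremality of the doubly lexical ordering to \emph{extend} this $P_4$ into an induced cycle of length at least six, contradicting the definition of chordal bipartiteness.

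The mechanism for the extension is the local exchange property enforced by double lexicality. Because the entry at $(u,v)$ is $0$ while $(u',v)$ and $(u,v')$ are $1$, the lexical minimality of the earlier row $u$ and the earlier column $v$ forces disagreements further to the left (respectively further up): there must be additional columns on which $u$ and $u'$ differ, and additional rows on which $v$ and $v'$ differ. Chasing these first disagreements, as selected by the lexical order, produces an alternating sequence of edges and non-edges, and hence a detour of odd length at least three joining the two non-adjacent endpoints $u$ and $v$. I would then show that the extremality of the chosen vertices pins down their adjacencies tightly enough that the closed walk formed by the $P_4$ together with this detour has no chords, i.e.\ is an induced $C_{2t}$ with $2t \ge 6$. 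This chordless-detour construction is the main obstacle: the bookkeeping needed to certify that the cycle we build is genuinely induced (no shortcut edges among the selected vertices) is precisely where the doubly lexical hypothesis must be used in full, and where a naive argument would only yield a permitted $C_4$ or a non-induced cycle.

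Finally, once $\Gamma$-freeness of the doubly lexical ordering is established, the theorem is immediate: the doubly lexical ordering is the desired ordering of rows and columns witnessing that $G$ is $\Gamma$-freeable. I note that only the direction ``chordal bipartite $\Rightarrow$ $\Gamma$-freeable'' is required here, so the (easier) converse implication can be omitted.
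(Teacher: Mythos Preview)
The paper does not prove this theorem at all: it is stated with citations to Klinz et al.\ and Farber and used as a black box in the proof of \cref{chordal_degree}. So there is no ``paper's own proof'' to compare against; you are supplying an argument where the authors chose to quote the literature.

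That said, your outline is the classical route (Hoffman--Kolen--Sakarovitch, Lubiw): take a doubly lexical ordering of the biadjacency matrix, assume a $\Gamma$ occurs, and use the lexical extremality to walk back to earlier rows/columns until an induced $C_{2t}$ with $t\ge 3$ is forced. Two cautions. First, your description of the doubly lexical convention (``rows sorted nondecreasingly top to bottom'' and likewise for columns) needs to be pinned down carefully, because the direction of the inequality determines which of the two $1$'s in $\Gamma$ gives the contradiction; with the usual convention the argument goes through, but a sloppy choice here can make the ``there must be an earlier disagreement'' step false. Second, the part you flag as ``the main obstacle''---certifying that the walk you build is actually chordless---is genuinely where all the work lives, and your sketch does not yet indicate the invariant that controls it. In the standard proof one does not build a single long detour in one shot; rather, one shows that a $\Gamma$ at $(u,u';v,v')$ forces another $\Gamma$ strictly earlier (in a suitable well-founded sense), and the minimal such $\Gamma$ then yields the induced cycle directly. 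Framing it as ``find the minimal $\Gamma$'' rather than ``extend the $P_4$'' makes the chordlessness bookkeeping tractable.
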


\begin{lemma} \label{chordal_degree}
For any $k \in \N$, a chordal bipartite graph that does not contain $K_{k,k}$ is $(k-1)$-degenerate.
\end{lemma}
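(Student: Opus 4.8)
The plan is to combine \cref{thm: chordal freeable} with a short extremal argument on the $\begin{psmallmatrix}0 & 1\\ 1 & 1\end{psmallmatrix}$-free biadjacency matrix. Since both chordal bipartiteness and $K_{k,k}$-freeness are inherited by induced subgraphs, and since, by definition, $G$ is $(k-1)$-degenerate once every induced subgraph of $G$ has a vertex of degree at most $k-1$, it suffices to prove the following: every $K_{k,k}$-free chordal bipartite graph $G=(U\cup V,E)$ has a vertex of degree at most $k-1$.

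Using \cref{thm: chordal freeable}, fix orderings of $U$ (the rows) and $V$ (the columns) so that $M_G$ contains no $\begin{psmallmatrix}0 & 1\\ 1 & 1\end{psmallmatrix}$ submatrix. Let $r$ be the \emph{last} row in this ordering. If $\deg(r)\le k-1$ we are done, so assume $N(r)\neq\emptyset$ and let $c^{\ast}$ be the \emph{last} column belonging to $N(r)$. The heart of the argument is the claim that \emph{for every row $i\neq r$ with $c^{\ast}\in N(i)$ we have $N(r)\subseteq N(i)$.} Indeed, $i<r$ because $r$ is the last row; pick any $c\in N(r)$ with $c<c^{\ast}$ and look at the $2\times 2$ submatrix of $M_G$ on rows $i<r$ and columns $c<c^{\ast}$: its entries $M_G[i,c^{\ast}]$, $M_G[r,c]$, $M_G[r,c^{\ast}]$ all equal $1$, so if $M_G[i,c]=0$ we would have exactly the pattern $\begin{psmallmatrix}0 & 1\\ 1 & 1\end{psmallmatrix}$, which is forbidden; hence $M_G[i,c]=1$, i.e. $c\in N(i)$. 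As every element of $N(r)$ is at most $c^{\ast}$, this yields $N(r)\subseteq N(i)$. (Phrased differently: $N(i)\cap N(r)$ is always downward closed inside $N(r)$ in the column order, so once it contains the maximum element $c^{\ast}$ of $N(r)$ it must be all of $N(r)$.)

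To conclude, suppose for contradiction that $\deg(r)\ge k$ and $\deg(c^{\ast})\ge k$. Choose $k$ columns $c_1,\dots,c_k\in N(r)$, and note that $N(c^{\ast})$ consists of $r$ together with at least $k-1$ further rows $i_1,\dots,i_{k-1}$; by the claim, $N(r)\subseteq N(i_j)$ for each $j$, and trivially $N(r)\subseteq N(r)$. Thus the $k$ rows $i_1,\dots,i_{k-1},r$ are each adjacent to all of $c_1,\dots,c_k$, producing a $K_{k,k}$ in $G$ -- a contradiction. Hence $\deg(r)\le k-1$ or $\deg(c^{\ast})\le k-1$, giving the desired low-degree vertex.

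I expect the only genuinely delicate point to be the orientation of the forbidden pattern: since the $0$ sits in the top-left corner, the "completable" corner is the one in the earlier row and earlier column, which is precisely why one must take $r$ to be the last row and $c^{\ast}$ the last column of $N(r)$, so that the prefix $N(i)\cap N(r)$ is pinned down to be all of $N(r)$. The remaining items (reducing to a single graph via heredity, the degenerate case $\deg(r)=0$, and small $k$, where $K_{1,1}$-freeness just says $E=\emptyset$) are routine.
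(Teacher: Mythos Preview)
Your proof is correct and follows essentially the same route as the paper's: both invoke \cref{thm: chordal freeable} to get a $\begin{psmallmatrix}0&1\\1&1\end{psmallmatrix}$-free biadjacency matrix, locate a ``corner'' $1$-entry, and use the forbidden pattern to show that the row- and column-neighbourhoods through that entry induce an all-ones block, which forces a $K_{k,k}$ once both neighbourhoods have size at least $k$. The only cosmetic difference is the choice of pivot---the paper takes the last column and then the bottommost $1$ in it, whereas you take the last row $r$ and then the rightmost $1$ in it---and the paper phrases the argument as a single contradiction on minimum degree rather than your case split on $\deg(r)$ and $\deg(c^\ast)$.
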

\begin{proof} Let $H$ be a chordal bipartite graph. If for every induced subgraph of $H$, a node of degree at most $k-1$ exists, we are done. Therefore, let us assume by contradiction that $G$ is an induced subgraph of $H$, where a node of degree at most $k-1$ does not exist. Then all nodes $V(G)$ have a degree of at least $k$ in $G$.
Using~\cref{thm: chordal freeable}, we know that there is a $P$-free biadjacency matrix of $G$, where $P=\begin{psmallmatrix}
0 & 1 \\
1 & 1 
\end{psmallmatrix}$. Let $M_G$ be that biadjacency matrix. Let $G = (U \cup V, E)$, where each row $i \in [|U|]$ and column $j \in [|V|]$ correspond to the vertices $u_i \in U$ and $v_j \in V$, respectively.
Let $i \in [|U|]$ be the maximum integer, such that $M_G(i, |V|) = 1$ (i.e., this is the bottommost row in the last column such that the entry is one). 
Consider the rows $R= \{i': u_{i'} \in N_G(v_{|V|})\}\subseteq [|U|]$ and columns $C= \{j': v_{j'} \in N_G(u_i)\}\subseteq [|V|]$. 
\begin{figure}[h]
    \centering
    \includegraphics[scale=0.18]{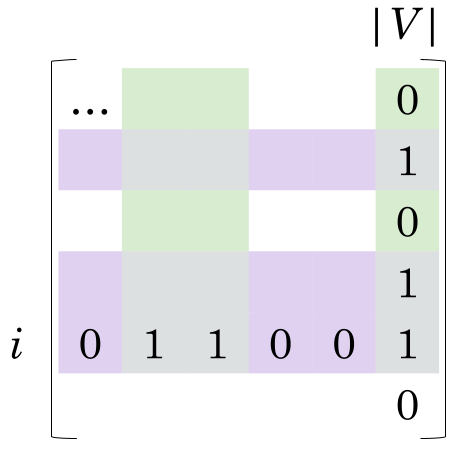}
    \caption{An example, where rows $R$ are denoted in purple and columns $C$ in green.}
    \label{fig:chordal_example}
\end{figure}
Let $M'$ be the submatrix induced on rows $R$ and columns $C$. 
According to our assumption, this submatrix $M'$ contains at least $k$ rows and $k$ columns. 
It is easy to see that $M'$ is an all-one matrix: Assume that it is not, and $M_G(a,b) = 0$ for some $a \in R$ and $b \in C$. Since $a \in R$, we have $M_G(a,|V|) = 1$ and since $b \in C$, $M_G(i,b)=1$.  
The $2$-by-$2$ submatrix induced on rows $\{a,i\}$ and columns $\{b,|V|\}$ is then equal to the forbidden pattern $P$, a contradiction. 
\end{proof}

\begin{theorem} \label{thm:chordal}
For all $n, m, k \in \N$,  $Z_{\cset}(m,n;k, k) \leq (m + n)(k-1)$, where $\cset$ is the class of chordal bipartite graphs.  
\end{theorem}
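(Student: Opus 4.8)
The plan is to combine Lemma~\ref{chordal_degree} with the edge-counting bound for degenerate graphs (Observation~\ref{obs: degenrate counting}). Concretely, let $G = (U \cup V, E)$ be any chordal bipartite graph with $|U| = m$, $|V| = n$, that does not contain $K_{k,k}$ as a subgraph. Since the class of chordal bipartite graphs is closed under taking induced subgraphs, every induced subgraph of $G$ is again a chordal bipartite graph with no $K_{k,k}$, so Lemma~\ref{chordal_degree} applies to $G$ itself and tells us that $G$ is $(k-1)$-degenerate.

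Once $(k-1)$-degeneracy is in hand, the theorem follows immediately: by Observation~\ref{obs: degenrate counting}, a $(k-1)$-degenerate graph on the vertex set $U \cup V$ has at most $(k-1) \cdot |V(G)| = (k-1)(m+n)$ edges. Taking the maximum over all such graphs $G$ gives $Z_{\cset}(m,n;k,k) \leq (m+n)(k-1)$, which is exactly the claimed bound. Setting $m = n$ then recovers the statement $Z_{\cset}(n;k) \leq 2n(k-1)$ of \cref{thm:chordal-intro}.

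There is essentially no obstacle here — the work has already been done in Lemma~\ref{chordal_degree}, whose proof uses the $\begin{psmallmatrix} 0 & 1 \\ 1 & 1 \end{psmallmatrix}$-freeability of chordal bipartite graphs (\cref{thm: chordal freeable}) to locate a vertex of degree at most $k-1$ in any induced subgraph. The only point worth stating carefully is the reduction from ``bound on $Z$'' to ``degeneracy of every $K_{k,k}$-free member of the class'', namely the hereditary closure remark, and the trivial observation that $|V(G)| = m + n$. One could alternatively phrase the whole argument self-contained by noting that if $G$ had more than $(m+n)(k-1)$ edges then some induced subgraph would have minimum degree at least $k$, and then repeat the matrix-pattern argument of Lemma~\ref{chordal_degree} to extract a $K_{k,k}$; but invoking the lemma directly is cleaner.
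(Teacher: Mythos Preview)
Your proposal is correct and matches the paper's own proof exactly: the paper also derives \cref{thm:chordal} by combining \cref{chordal_degree} with \cref{obs: degenrate counting}. Your extra remark about hereditary closure is harmless but unnecessary, since \cref{chordal_degree} already asserts $(k-1)$-degeneracy directly for any $K_{k,k}$-free chordal bipartite graph.
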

\begin{proof}
    The result follows from \cref{chordal_degree} and \cref{obs: degenrate counting}.
\end{proof}

The matching lower bound is shown in \cref{lem: chain lower bound}.

\introChordal*
\begin{proof}
    The result follows from \cref{thm:chordal} when $n = m$.
\end{proof}


\subsection{Segment ray graphs} \label{sec:sr_proof}

Next, we present a simple proof of the upper bound of $Z_{\sr}(m, n ; k, k)$ for the intersection graph of horizontal segments and vertical upward rays in ${\mathbb R}^2$, denoted by $\sr$.
More precisely, $(\sset, \rset, \phi)$, is a segment ray ($\sr$) representation of a bipartite graph $G= (U \cup V, E)$, if $\sset$ and $\rset$ are sets of horizontal segments and vertical upwards rays in $\R^2$ respectively, and $\phi: U \mapsto \sset, V \mapsto \rset$, such that $\{u, v\} \in E$, where $u \in U$, $v \in V$, if and only if $\phi(u)$ and $\phi(v)$ intersect.

Although the upper bound $O((m + n)k)$ was shown in~\cite{chan2023number}, we improve this upper bound to $2(m + n)(k-1)$.

\begin{lemma} \label{sr_degree}
Every segment-ray graph that does not contain $K_{k,k}$ is $2(k-1)$-degenerate. 
\end{lemma}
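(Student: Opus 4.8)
The plan is to mimic the proof of \cref{chordal_degree}: show that every induced subgraph of a segment-ray graph contains a vertex of degree at most $2(k-1)$, assuming the graph is $K_{k,k}$-free. Since the class $\sr$ is closed under induced subgraphs, it suffices to take an arbitrary segment-ray graph $G = (U \cup V, E)$ that is $K_{k,k}$-free and exhibit a low-degree vertex. The natural candidate is the vertex whose geometric object is ``extreme'' in some direction — here, because the rays are upward-pointing vertical rays, the relevant extremal object is the \emph{lowest} segment, i.e.\ the horizontal segment $s^* \in \sset$ with smallest $y$-coordinate (breaking ties arbitrarily). Let $u^*$ be the corresponding vertex and let $R$ be the set of rays intersecting $s^*$; these correspond to $N(u^*)$, and I want to show $|N(u^*)| \le 2(k-1)$, or else find some other vertex of small degree.

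The key geometric observation is this: a vertical upward ray $r$ intersects the lowest segment $s^*$ exactly when the $x$-coordinate of $r$ lies in the $x$-interval spanned by $s^*$ \emph{and} the base (bottom endpoint) of $r$ lies at or below the height of $s^*$. Since $s^*$ is the lowest segment, every ray in $R$ extends upward through the entire vertical strip above $s^*$ up to infinity. Now sort the rays of $R$ by $x$-coordinate, say $r_1, r_2, \ldots, r_t$. Consider any segment $s$ (vertex $u \in U$) that intersects a ray $r_i \in R$: because $r_i$ is a full upward ray starting below $s^*$, and $s$ lies at height $\ge$ that of $s^*$, the segment $s$ must cross the $x$-coordinate of $r_i$ at some height above the base of $r_i$, hence $s$ intersects $r_i$. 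The point is that if a segment $s$ spans an $x$-interval containing the $x$-coordinates of $r_i, r_{i+1}, \ldots, r_j$, then $s$ is adjacent to all of them. So I would argue: partition $R$ into two halves, the ``left half'' $r_1, \ldots, r_{\lceil t/2 \rceil}$ and the ``right half''. Any segment $s$ whose $x$-interval contains the median ray's $x$-coordinate and also contains $r_1$'s (resp. $r_t$'s) $x$-coordinate is adjacent to all of the left (resp. right) half. If $t \ge 2k-1$, then each half has $\ge k$ rays; and I want to find $k$ segments that are commonly adjacent to one of the halves.

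The cleanest route is to look at the bipartite structure restricted to $u^*$'s neighborhood: let $R = N(u^*)$ ordered by $x$-coordinate as $r_1, \ldots, r_t$, and suppose for contradiction $t \ge 2k-1$. Let $r_{\mathrm{mid}}$ be the median ray (so there are $\ge k-1$ rays on each side of it in the ordering, plus $r_{\mathrm{mid}}$ itself, giving $\ge k$ on the left-inclusive side and $\ge k$ on the right-inclusive side). Consider all segments $s$ (including $s^*$) that intersect $r_{\mathrm{mid}}$ — there are at most $k-1$ of them other than\ldots\ wait, that's not immediate. Instead: among the $\ge k$ rays $r_1,\ldots,r_{\mathrm{mid}}$, a segment adjacent to both $r_1$ and $r_{\mathrm{mid}}$ spans their whole $x$-range (since segments are $x$-intervals) and is therefore adjacent to \emph{all} of $r_1,\ldots,r_{\mathrm{mid}}$. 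So the set $A$ of segments adjacent to both $r_1$ and $r_{\mathrm{mid}}$ forms, together with $r_1,\ldots,r_{\mathrm{mid}}$, a complete bipartite subgraph; since $G$ is $K_{k,k}$-free and there are $\ge k$ rays, we get $|A| \le k-1$. Symmetrically the set $B$ of segments adjacent to both $r_{\mathrm{mid}}$ and $r_t$ satisfies $|B| \le k-1$. But $s^*$ is adjacent to all of $r_1,\ldots,r_t$, so $s^* \in A \cap B$; moreover every segment adjacent to $r_{\mathrm{mid}}$ lies in $A \cup B$ — because a segment adjacent to $r_{\mathrm{mid}}$ is an $x$-interval containing $r_{\mathrm{mid}}$'s coordinate, and it must extend either leftward to reach $r_1$ or\ldots\ hmm, not necessarily. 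Let me state it correctly: a segment adjacent to $r_{\mathrm{mid}}$ contains $x(r_{\mathrm{mid}})$; it need not reach $x(r_1)$. So this needs one more idea: I will charge $r_{\mathrm{mid}}$'s neighborhood differently, e.g.\ take $r_{\mathrm{mid}}$ to be chosen so that $\{r_1, \ldots, r_{\mathrm{mid}}\}$ and $\{r_{\mathrm{mid}}, \ldots, r_t\}$ each have exactly $k$ elements (possible when $t = 2k-1$), and observe that $\deg(u^*) = t$ together with the $K_{k,k}$-freeness forces, via a Dilworth/interval-covering argument on the $x$-intervals of the segments, a contradiction.

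The main obstacle I anticipate is exactly this last bookkeeping step: converting ``$s^*$ is the lowest segment and is adjacent to $2k-1$ rays'' into a forbidden $K_{k,k}$ without losing a constant factor. The resolution will hinge on the fact that segments are \emph{intervals} in the $x$-coordinate, so adjacency to two rays $r_i, r_j$ (with $i<j$) in $N(u^*)$ automatically implies adjacency to every ray in between — a Helly-type / consecutive-ones property. This should let me show that the segments adjacent to a ``central'' ray, restricted suitably, plus that ray's $k$ leftward or $k$ rightward companions, contain a $K_{k,k}$. Once the degeneracy bound is established, \cref{obs: degenrate counting} immediately gives the edge bound, yielding $Z_{\sr}(m,n;k,k) \le 2(m+n)(k-1)$ and in particular \cref{cor:sr}.
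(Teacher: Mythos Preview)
Your plan has the right general shape---pick an extremal geometric object and exploit the interval structure---but the specific extremal object you choose (the lowest segment $s^*$) leads to a dead end that you yourself flag and do not resolve. The difficulty is not mere bookkeeping. Suppose every vertex has degree at least $2k-1$, let $s^*$ be the lowest segment, and let $R=\{r_1,\ldots,r_t\}$ be its ray-neighbours ordered by $x$-coordinate. As you note, every ray in $R$ starts below every segment, so adjacency between $R$ and all segments is governed purely by $x$-coordinates. If you now pass to a middle ray $r^*=r_k$ and look at the $\geq 2k-1$ segments through it, each such segment is an $x$-interval containing $x(r^*)$; at most $k-1$ of them reach left of $x(r_1)$ and at most $k-1$ reach right of $x(r_t)$ (otherwise the outer $k$ rays on that side give a $K_{k,k}$), but the remaining segments are strictly inside $s^*.x$ and give you nothing further. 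The asymmetry is that segments are \emph{intervals} in $x$ while rays are \emph{points} in $x$, so the clean left/right dichotomy you need at the second level is available when you partition rays relative to a ray, but not when you partition segments relative to a segment. No Dilworth-style patch on the first level repairs this.

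The paper's proof exploits exactly this asymmetry by starting on the other side. Take the ray $r$ with the \emph{highest} starting point; then every other ray reaches at least as high, so it meets any segment in $N(r)$ whenever the $x$-coordinates match. Among the $\geq 2k-1$ segments in $N(r)$, let $S_{left}$ (resp.\ $S_{right}$) be the $k-1$ with leftmost left endpoints (resp.\ rightmost right endpoints), and pick $s\in N(r)\setminus(S_{left}\cup S_{right})$. Now look at the $\geq 2k-1$ rays through $s$: these \emph{are} points in $x$, so they split cleanly into those left of $r$ and those right of $r$, and one side has at least $k-1$ rays. Those rays, together with $r$, are trapped in the $x$-range $[s_l,r_x]$ (or $[r_x,s_r]$), which every segment in $S_{left}$ (or $S_{right}$) covers; together with $s$ this yields $K_{k,k}$. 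Swapping which side you start the two-level argument from is precisely the ``one more idea'' you were missing.
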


\begin{proof}
Let $H$ be a segment-ray graph. If, for every induced subgraph of $H$, a node of degree at most $2(k-1)$ exists, we are done. Therefore, let us assume by contradiction, that $G$ is an induced subgraph of $H$, where a node of degree at most $2(k-1)$ does not exist. Then, all nodes $V(G)$ must have a degree at least $2k -1$ in $G$. 

Let $(\sset, \rset, \phi)$ be the $\sr$ representation of $G = (U \cup V, E)$, such that $\phi: U \mapsto \sset, V \mapsto \rset$. Let $x$ and $y$ be the horizontal and vertical axes, respectively.
Each ray $r\in\rset$ can be addressed by a pair $(r_x,r_y)$, where $r_x$ and $r_y$ are the $\bold{x}$ and $\bold{y}$ coordinates of its starting point respectively. 
Each horizontal segment $s\in\sset$ can be addressed using three variables $(s_l, s_r, s_y)$, such that $[s_l, s_r]$ and $s_y$ are the projections of $s$ to the $x$ and $y$ axis respectively. 

Let $r \in \rset$ be the ray with the largest $r_y$ (highest starting point). 
Let $S$ be the set of segments that intersect $r$. Then, according to our assumption, $|S|\geq 2k-1$. 
Let $S_{right} \subseteq S$ be the set of $k-1$ segments with the largest (rightmost) $s_r$, and let $S_{left} \subset S$ be the set of $k-1$ segments with the least (leftmost) $s_l$. 
Then there must be at least one segment $s \in S \setminus (S_{right} \cup S_{left})$ (\cref{fig:SR_degree}). 
\begin{figure}[h]
    \centering
    \includegraphics[scale=0.18]{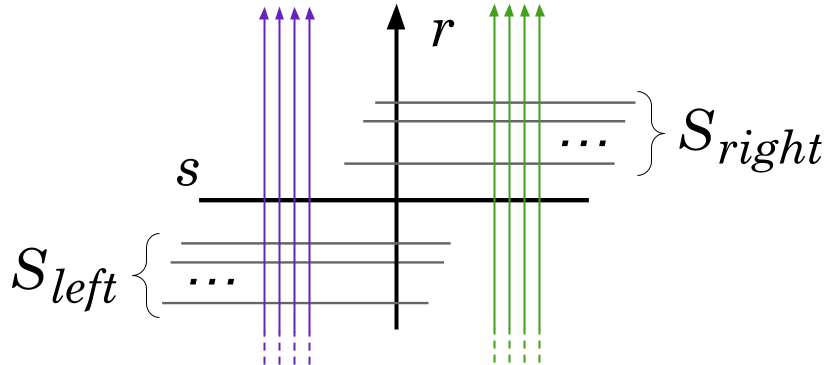}
    \caption{$R_{left}$ and $R_{right}$ are denoted by purple and green rays respectively. }
    \label{fig:SR_degree}
\end{figure} 

Let $R$ be the set of rays that intersect $s$. By our assumption, $|R| \geq 2k-1$. 
Let $R_{left}, R_{right} \subseteq R$ be the sets of rays to the left and right of $r$, respectively.  
Since $R = R_{left} \cup R_{right} \cup \{r\}$, then at least one of $R_{left}, R_{right}$ must contain at least $k-1$ rays. Let us assume it is $R_{left}$, the other case can be argued symmetrically.
Since $r$ is the ray with the highest $r_y$, then each ray in $R_{left}$ must intersect each segment in $S_{left}$ (\cref{fig:SR_degree}). This implies that the vertices mapped to rays in $R_{left} \cup \{r\}$ and segments in $S_{left} \cup \{s\}$ must induce a $k$-by-$k$ biclique. This is a contradiction. 
\end{proof}

\begin{theorem} \label{thm:sr}
For all $n, m, k \in \N$, $Z_{\sr}(m,n;k, k) \leq 2(m + n)(k-1)$ in segment ray graphs.  
\end{theorem}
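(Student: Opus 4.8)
The plan is straightforward once Lemma~\ref{sr_degree} is available: a $2(k-1)$-degenerate graph on $N$ vertices has at most $2(k-1)N$ edges by Observation~\ref{obs: degenrate counting}. So I would first take any graph $G = (U \cup V, E) \in \sr$ on $|U| = m$, $|V| = n$ vertices that is $K_{k,k}$-free. By Lemma~\ref{sr_degree}, $G$ is $2(k-1)$-degenerate, and hence $|E| \le 2(k-1)|V(G)| = 2(k-1)(m+n)$. This already is the claimed bound $Z_{\sr}(m,n;k,k) \le 2(m+n)(k-1)$, so the theorem follows immediately.

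The only subtlety worth spelling out is that Lemma~\ref{sr_degree} as stated applies to "every segment-ray graph that does not contain $K_{k,k}$", and degeneracy is a hereditary property in the sense that every induced subgraph of such a $G$ is again an $\sr$ graph (the class is closed under induced subgraphs, as noted at the start of Section~\ref{sec:warm-up}) and is still $K_{k,k}$-free; so every induced subgraph has a vertex of degree at most $2(k-1)$, which is exactly the definition of $2(k-1)$-degeneracy used in Observation~\ref{obs: degenrate counting}. Thus there is nothing to check beyond chaining the two cited results.

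I do not anticipate any real obstacle: this is a one-line corollary of the lemma and the counting observation, exactly parallel to the derivation of Theorem~\ref{thm:chordal} from Lemma~\ref{chordal_degree} and Observation~\ref{obs: degenrate counting}. If anything, the "hard part" was already done inside the proof of Lemma~\ref{sr_degree}, namely the geometric argument that picks the highest ray $r$, the segments $S$ it meets, a middle segment $s \in S \setminus (S_{right} \cup S_{left})$, and then the rays $R$ meeting $s$, producing a $K_{k,k}$ between $R_{left} \cup \{r\}$ and $S_{left} \cup \{s\}$ whenever all degrees exceed $2(k-1)$. Given that, the proof of Theorem~\ref{thm:sr} is just: "The result follows from Lemma~\ref{sr_degree} and Observation~\ref{obs: degenrate counting}."
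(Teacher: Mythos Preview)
Your proposal is correct and matches the paper's proof exactly: the paper's entire proof of Theorem~\ref{thm:sr} is the single sentence ``The result follows from \cref{sr_degree} and \cref{obs: degenrate counting}.'' Your additional remarks about hereditariness are accurate but already implicit in how Lemma~\ref{sr_degree} is stated and proved.
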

\begin{proof}
    The result follows from \cref{sr_degree} and \cref{obs: degenrate counting}.
\end{proof}

\begin{corollary} \label{cor:sr}
When $m = n$, $Z_{\sr}(n;k) \leq 4n(k-1)$ in segment ray graphs.  
\end{corollary}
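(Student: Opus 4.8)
The plan is to derive this directly from \cref{thm:sr}. When $m=n$, \cref{thm:sr} gives $Z_{\sr}(n,n;k,k) \leq 2(n+n)(k-1) = 4n(k-1)$. Since $Z_{\sr}(n;k)$ is by definition $Z_{\sr}(n,n;k,k)$, the bound follows immediately. So the body of the proof is essentially one line: instantiate $m=n$ in \cref{thm:sr}.

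\begin{proof}
    Apply \cref{thm:sr} with $m = n$: a segment-ray graph $G = (U \cup V, E)$ with $|U| = |V| = n$ and no $K_{k,k}$ has at most $2(n + n)(k-1) = 4n(k-1)$ edges. By definition $Z_{\sr}(n;k) = Z_{\sr}(n,n;k,k)$, so $Z_{\sr}(n;k) \leq 4n(k-1)$.
\end{proof}

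There is no real obstacle here; the only thing to be careful about is matching the notational convention introduced earlier (the paper writes $Z_{\cset}(n;k)$ as shorthand for $Z_{\cset}(n,n;k,k)$), so the substitution is purely bookkeeping. All the genuine content lives in \cref{sr_degree} (the $2(k-1)$-degeneracy argument via the highest ray and a middle segment) and \cref{obs: degenrate counting} (the edge count of a degenerate graph), both of which are already established in the excerpt.
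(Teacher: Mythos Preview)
Your proof is correct and matches the paper's approach exactly: the corollary is stated immediately after \cref{thm:sr} without a separate proof, since it is just the specialization $m=n$ of that theorem, which is precisely what you do.
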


\section{Graphs of Ferrers Dimension Three}\label{sec:chain3}

This section presents the proof of the first part of \cref{thm:dichotomy}.

Let $x$ and $y$ be the horizontal and vertical axes, respectively.
We say that a rectangle $r$ in $\mathbb{R}^2$ is bottomless \footnote{These rectangles have also been called $3$-sided rectangles \cite{chan2023number} in the context of $\sr$ graphs.} if its projection to the $y$-axis is an interval that starts at $-\infty$.


We say that $(\Se, \Bo, \phi)$ is a segment bottomless rectangle containment representation of a bipartite graph $G = (U \cup V, E)$, if $\Se$ and $\Bo$ are sets of horizontal segments and bottomless rectangles in $\mathbb{R}^2$, respectively, $\phi: U \mapsto \Se, V \mapsto \Bo$, and for all $u\in U$, $v\in V$, we have $\{u, v\} \in E$ if and only if $\phi(u)$ is contained within $\phi(v)$. 
A bipartite graph $G$ is a segment bottomless rectangle containment graph, if it has a segment bottomless rectangle containment representation. 

For $s \in \Se \cup \Bo$, let $s.x$ and $s.y$ denote the projections of $s$ to $x$- and $y$-axis respectively. For an interval $s.x = [a, b]$, we use $\min(s.x) = a$ and $\max(s.x) = b$ to denote its minimum and maximum points.


\begin{lemma} \label{lem:chain3_is_brc}
A bipartite graph $G$ is of Ferrers dimension three ($G \in \chain^3$) if and only if it is a segment bottomless rectangle containment graph.  
\end{lemma}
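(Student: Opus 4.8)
The plan is to prove both directions by a coordinate-wise translation between the single predicate ``$\phi(u)$ is contained in $\phi(v)$'' and a conjunction of three ``a point lies in a ray'' predicates, one for each relevant direction (left edge, right edge, height).

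\emph{Containment representation $\Rightarrow$ $\chain^3$.} Starting from a segment bottomless rectangle containment representation, I would write $\phi(u)=[a_u,b_u]\times\{h_u\}$ for $u\in U$ and $\phi(v)=[c_v,d_v]\times(-\infty,e_v]$ for $v\in V$, and observe that $\phi(u)\subseteq\phi(v)$ holds exactly when the three inequalities $c_v\le a_u$, $b_u\le d_v$ and $h_u\le e_v$ hold simultaneously. Each of these, read as an adjacency rule on $U\cup V$, defines a chain graph: the first is realized by sending each $u$ to the point $a_u$ and each $v$ to the rightward ray $[c_v,\infty)$; the second and third are realized the same way after reflecting the line, so that the $v$'s become leftward rays. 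Then $G$ is the intersection, on the common vertex set $U\cup V$, of these three chain graphs, so $G\in\chain^3$.

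\emph{$\chain^3 \Rightarrow$ containment representation.} For the converse I would start from $G=G_1\cap G_2\cap G_3$ with each $G_i$ a chain graph on $U\cup V$, and use the standard facts that a chain graph remains a chain graph when the two sides are swapped (it is exactly a $2K_2$-free bipartite graph, a symmetric condition) and when the line is reflected. Hence I can fix a representation of each $G_i$ in which every vertex of $U$ is a point and every vertex of $V$ is a ray, taking the ray rightward for $G_1$ and leftward for $G_2$ and $G_3$; calling the resulting points $a_u,b_u,h_u$ and the ray endpoints $c_v,d_v,e_v$, the adjacencies become exactly $c_v\le a_u$, $b_u\le d_v$, $h_u\le e_v$. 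I would then assemble the segment $[a_u,b_u]\times\{h_u\}$ and the bottomless rectangle $[c_v,d_v]\times(-\infty,e_v]$, so that $\phi(u)\subseteq\phi(v)\iff\{u,v\}\in E(G_1)\cap E(G_2)\cap E(G_3)=E$.

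The one genuine subtlety — and the only place I expect to spend real effort — is that $a_u$ and $b_u$ come from two unrelated representations on the same $x$-axis, so nothing yet forces $a_u\le b_u$ (nor $c_v\le d_v$), i.e.\ the ``segments'' and ``rectangles'' need not be geometrically legal. The fix is that the adjacency of $G_1$ depends only on the order type of the numbers $\{a_u\}\cup\{c_v\}$, and likewise $G_2$ on $\{b_u\}\cup\{d_v\}$; so I can post-compose the first representation with any strictly increasing map into $(0,1)$ and the second with any strictly increasing map into $(2,3)$, leaving all edges intact while forcing $a_u,c_v<1<2<b_u,d_v$, and the $y$-coordinates $h_u,e_v$ need no adjustment. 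Beyond this the lemma is pure bookkeeping — a dictionary between the two representations rather than a substantial theorem.
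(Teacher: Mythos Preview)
Your proof is correct. The paper follows the same coordinate-wise dictionary but organizes the factorization as $\chain^3 = \chain^2 \cdot \chain$ rather than your $\chain \cdot \chain \cdot \chain$: it projects to the $x$-axis to get an interval containment bigraph (invoking the known equivalence $\chain^2 = $ interval containment bigraphs) and to the $y$-axis to get a point--ray graph. This lets the paper sidestep the well-formedness issue you handle explicitly, since the intervals coming out of the $\chain^2$ black box are already geometrically valid, whereas your rescaling into $(0,1)$ and $(2,3)$ effectively re-derives that black box on the fly. Your route is thus a bit more self-contained; the paper's is a bit shorter by delegation. Neither is materially harder, and the underlying idea is the same.
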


The proof can be found in~\Cref{sec:chain3_is_bot}.

Given a bipartite graph $G \in \chain^3$, we now know it must have a containment representation $(\Se, \Bo, \phi)$, where $\Se$ and $\Bo$ are sets of horizontal segments and bottomless rectangles in $\mathbb{R}^2$, and $\phi: U \mapsto \Se, V \mapsto \Bo$. 

Let $k$ be the minimum positive integer such that $G$ does not contain $K_{k, k}$ as a subgraph.

Without loss of generality, assume that no two horizontal segments have the same $y$-coordinate. We define three (strict) partial orders (denoted by $\pset^{DL}$, $\pset^{DR}$, and $\pset^{C}$) on $\sset$ that will be crucial to our analysis (\Cref{fig:CHAIN3_segment_orderings}). 
For $s, s' \in \sset$, we say
\begin{itemize}
    \item[$\bullet$] $s'$ succeeds $s$ in the partial order $\pset^{DL}$, and denote it by $s \prec_{DL} s'$, if $\min(s'.x)\leq \min(s.x)$, $\max(s'.x) \leq \max(s.x)$, and $s'.y < s.y$, 

    \item[$\bullet$] $s'$ succeeds $s$ in the partial order $\pset^{DR}$, and denote it by $s \prec_{DR} s'$, if $\min(s.x) \leq \min(s'.x)$, and $\max(s.x) \leq \max(s'.x)$, and $s'.y < s.y$, 
    
    \item[$\bullet$] $s'$ succeeds $s$ in the partial order $\pset^{C}$, and denote it by $s \prec_{C} s'$, if $s'.x \subset s.x$.
\end{itemize}
It is easy to check that $\pset^{DL}$, $\pset^{DR}$, and $\pset^{C}$ are transitive and asymmetric.



    

\begin{figure}[h]
    \vspace{-1em} 
    \centering
    \includegraphics[scale=0.18]{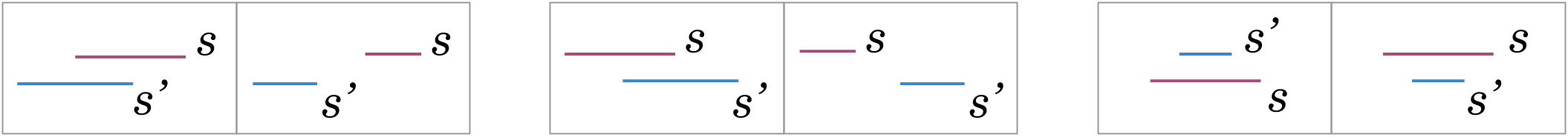}
    \caption{The scenarios that define relations in $\pset^{DL}$ (two leftmost), $\pset^{DR}$ (two in the middle) and $\pset^{C}$ (two rightmost images), where $s'$ succeeds $s$.  
    }
    \label{fig:CHAIN3_segment_orderings} 
\end{figure} 
If $s \prec_{DL} s'$ or $s' \prec_{DL} s$, we say that $s$ and $s'$ are comparable in $\pset^{DL}$; otherwise, they are incomparable. We can say the same about comparability in $\pset^{DR}$ and $\pset^C$.  
The following claim is straightforward, and a consequence of a simple case analysis (see \cref{fig:CHAIN3_segment_orderings}).  

\begin{observation} \label{lem:chain3_all_ordered}
A pair of segments $s, s' \in \Se$ is comparable in one of the partial orders $\pset^{DL},\pset^{DR},\pset^C$. 
\end{observation}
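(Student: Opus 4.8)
The plan is to do a direct case analysis on the relative position of the $x$-projections of $s$ and $s'$, since the $y$-coordinates are assumed distinct and thus always strictly ordered. Write $s.x = [a,b]$, $s'.x = [a',b']$, and assume without loss of generality that $s'.y < s.y$ (otherwise swap the roles of $s$ and $s'$; the three partial orders are set up symmetrically in the two segments, so nothing is lost). It now suffices to show that $s'$ succeeds $s$ in at least one of $\pset^{DL}$, $\pset^{DR}$, $\pset^{C}$. Since $s'.y < s.y$, the $y$-condition required in all three definitions is automatically satisfied, so the task reduces to a statement purely about the two intervals $[a,b]$ and $[a',b']$: at least one of the following holds --- (i) $a' \le a$ and $b' \le b$ (giving $s \prec_{DL} s'$); (ii) $a \le a'$ and $b \le b'$ (giving $s \prec_{DR} s'$); (iii) $[a',b'] \subsetneq [a,b]$, i.e. $a < a'$ and $b' < b$ with at least one inequality... actually $a \le a'$, $b' \le b$ and $[a',b']\neq[a,b]$ (giving $s \prec_{C} s'$).

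The key step is the elementary observation that for any two real intervals $[a,b]$ and $[a',b']$, comparing the left endpoints and the right endpoints gives four sign patterns for the pair $(\,\mathrm{sign}(a'-a),\,\mathrm{sign}(b'-b)\,)$: if $a' \le a$ and $b' \le b$ we are in case (i); if $a \ge a'$... let me restate: if $a \le a'$ and $b \le b'$ we are in case (ii); if $a \le a'$ and $b' \le b$ then $[a',b'] \subseteq [a,b]$, which is case (iii) when the containment is proper, and when $[a',b']=[a,b]$ we are simultaneously in cases (i) and (ii) so we are still fine; and the remaining pattern $a' \le a$ and $b \le b'$ means $[a,b]\subseteq[a',b']$, which again falls under case (i) with the roles read off directly ($a'\le a$, $b'\ge b$ contradicts $b\le b'$... ) --- here one uses that $[a,b]\subseteq[a',b']$ gives $a'\le a$ and $b\le b'$, i.e. $\min(s'.x)\le\min(s.x)$ and $\max(s'.x)\le\max(s.x)$ fails, so instead this is the $\pset^{DL}$ relation $s' \prec_{DL} s$ read with $s,s'$ swapped. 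The clean way to phrase it: the predicate ``$a'\le a$'' is either true or false, and ``$b'\le b$'' is either true or false; enumerate the $2\times 2$ possibilities and check each lands in $\pset^{DL}$, $\pset^{DR}$, or $\pset^{C}$ (with boundary/equality cases absorbed into whichever order's non-strict inequalities accommodate them).

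I do not expect a genuine obstacle here; the only thing to be careful about is bookkeeping the non-strict versus strict inequalities at the interval endpoints so that the equality case $[a,b]=[a',b']$ and the two one-sided-equality cases are each covered by the $\le$'s appearing in the definitions of $\prec_{DL}$ and $\prec_{DR}$ (note $\prec_{C}$ alone, using $\subsetneq$, would miss them). A glance at \cref{fig:CHAIN3_segment_orderings} confirms the six depicted scenarios exhaust these patterns, so the analysis is complete.
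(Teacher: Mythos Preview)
Your approach --- a $2\times 2$ case analysis on the signs of $a'-a$ and $b'-b$ after assuming $s'.y < s.y$ --- is exactly what the paper has in mind when it calls this ``a simple case analysis (see \cref{fig:CHAIN3_segment_orderings})''. The plan is sound.

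There is one stumble in your execution, however, and it matters. In the fourth case, $a' \le a$ and $b \le b'$ (with at least one strict), you have $s.x \subsetneq s'.x$, and you try to place this under $\pset^{DL}$ with the roles of $s,s'$ swapped. That does not work: $s' \prec_{DL} s$ would require $s.y < s'.y$, contradicting your WLOG assumption $s'.y < s.y$. The correct resolution is $s' \prec_{C} s$, which is available precisely because $\pset^{C}$ carries \emph{no $y$-condition} (re-read its definition). This also means your sentence ``it now suffices to show that $s'$ succeeds $s$'' is slightly too strong: in this fourth case it is $s$ that succeeds $s'$ (in $\pset^C$), but of course comparability is symmetric, so the observation still follows. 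Once you patch this one case, the argument is complete and matches the paper's.
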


For a segment $s \in \Se$, denote by $\posucc^{DL}(s), \posucc^{DR}(s), \posucc^{C}(s) \subseteq \Se$ the sets of all successors of $s$ with respect to partial orders $\pset^{DL}$, $\pset^{DR}$ and $\pset^{C}$, respectively.

To count the number of edges in a bipartite graph $G$ with a representation $(\Se, \Bo, \phi)$, we classify the edges into \textbf{bulky} and \textbf{thin} edges. 
For an edge $\{u, v\}\in E(G)$, we have containment $\phi(u) \subseteq \phi(v)$, where $\phi(u) \in \Se$, $\phi(v) \in \Bo$. We say that the edge $\{u, v\}$ is \textbf{DL-bulky} if the bottomless rectangle $\phi(v)$ contains $k-1$ segments from $\posucc^{DL}(\phi(u))$ (See \cref{fig:DL_bulky}).
We can analogously define \textbf{DR-bulky} and \textbf{C-bulky} edges. All edges that are not bulky are called thin.
An edge is bulky if it is DL-bulky, DR-bulky, or C-bulky.  

\begin{figure}[h]
    \vspace{-1em} 
    \centering
    \includegraphics[scale=0.18]{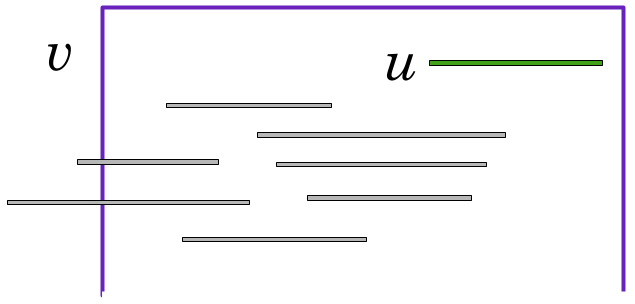}
    \caption{The edge $\{u, v\}$ is DL-bulky, if $\phi(v)$ (purple) contains $k-1$ segments succeeding $\phi(u)$ (green) in $\pset^{DL}$.}
    \label{fig:DL_bulky} 
\end{figure} 

If $s \prec_{DL} s'$ or $s' \prec_{DL} s$, we say that $s$ and $s'$ are comparable in $\pset^{DL}$; otherwise, they are incomparable. We can say the same about comparability in $\pset^{DR}$ and $\pset^C$.  
The following claim is straightforward, and a consequence of a simple case analysis (see \cref{fig:CHAIN3_segment_orderings}).  

\begin{lemma} \label{lem:chain3_bulky}
For each $u \in U$, the number of bulky edges incident to $u$ of each type is at most $(k-1)$.  Therefore, at most $3(k-1)$ bulky edges are incident to each $u$.    
\end{lemma}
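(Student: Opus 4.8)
The plan is to fix a vertex $u \in U$ with segment $s = \phi(u) \in \Se$, and to bound the number of DL-bulky edges at $u$; the bounds for DR-bulky and C-bulky edges follow by the same argument (using $\pset^{DR}$ or $\pset^{C}$ in place of $\pset^{DL}$), and summing the three types gives the final $3(k-1)$. First I would suppose for contradiction that $u$ has at least $k$ DL-bulky edges, say to vertices $v_1, \dots, v_k \in V$ with bottomless rectangles $B_i = \phi(v_i) \in \Bo$, each containing $s$. Since each $B_i$ is bottomless, its $x$-projection $B_i.x$ is an interval containing $s.x$; order the $v_i$ so that $B_1.x \subseteq B_2.x \subseteq \dots$ is impossible in general, but one can at least pick the one with the \emph{smallest} $x$-projection — call it $v^* = v_{i^*}$ with rectangle $B^* = B_{i^*}$ — so that $B^*.x \subseteq B_i.x$ fails too. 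The right move is instead: among $B_1, \dots, B_k$, take $B^*$ to be one whose $x$-interval is inclusion-minimal (such a rectangle exists, though it need not be unique or contained in all others). Hmm — this is exactly where care is needed, so let me restructure.

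The cleaner approach: since $\{u,v^*\}$ is DL-bulky, $B^* = \phi(v^*)$ contains $k-1$ segments $t_1, \dots, t_{k-1} \in \posucc^{DL}(s)$, i.e. $s \prec_{DL} t_j$ for each $j$. By definition of $\prec_{DL}$, each $t_j$ satisfies $\min(t_j.x) \le \min(s.x)$ and $\max(t_j.x) \le \max(s.x)$, so $t_j.x$ extends at least as far left as $s.x$ and ends no further right. Now take \emph{any} other DL-bulky neighbour $v_\ell$ of $u$; its rectangle $B_\ell$ contains $s$, hence $B_\ell.x \supseteq s.x$, hence $\min(B_\ell.x) \le \min(s.x) \le \max(s.x) \le \max(B_\ell.x)$. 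I claim $B_\ell$ then contains each $t_j$: vertically, $B_\ell$ is bottomless and $t_j.y < s.y$ (from $s \prec_{DL} t_j$) while $t_j.y$ lies below $s.y$ which lies in $B_\ell.y$, so $t_j.y \in B_\ell.y$; horizontally, $t_j.x \subseteq B_\ell.x$ precisely when $\min(B_\ell.x) \le \min(t_j.x)$ and $\max(t_j.x) \le \max(B_\ell.x)$ — the second holds since $\max(t_j.x) \le \max(s.x) \le \max(B_\ell.x)$, but the first does \emph{not} follow, because $t_j$ may extend further left than $B_\ell$. So I must choose $v^*$ to be the DL-bulky neighbour of $u$ whose rectangle has the \emph{leftmost} left edge, i.e. minimal $\min(B^*.x)$; then for that choice, the $t_j$ contained in $B^*$ satisfy $\min(t_j.x) \ge \min(B^*.x)$, and for every other DL-bulky neighbour $v_\ell$ we have $\min(B_\ell.x) \ge \min(B^*.x)$... which is still the wrong direction.

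So the correct selection is the opposite: let $v^*$ be the DL-bulky neighbour whose rectangle $B^*$ has the \emph{rightmost} left edge $\min(B^*.x)$ among all the (at least $k$) DL-bulky neighbours of $u$. For any other DL-bulky neighbour $v_\ell$ we then have $\min(B_\ell.x) \le \min(B^*.x)$. The $k-1$ segments $t_1,\dots,t_{k-1} \in \posucc^{DL}(s)$ witnessed by $B^*$ satisfy $\min(t_j.x) \ge \min(B^*.x) \ge \min(B_\ell.x)$ and $\max(t_j.x) \le \max(s.x) \le \max(B_\ell.x)$, so $t_j.x \subseteq B_\ell.x$; and vertically $t_j.y < s.y$ together with bottomlessness of $B_\ell$ and $s \in B_\ell$ gives $t_j.y \in B_\ell.y$. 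Hence every $B_\ell$ with $\ell \ne i^*$ contains all of $t_1,\dots,t_{k-1}$, and also contains $s$. Taking $U' = \{u\} \cup \{$vertices mapped to $t_1,\dots,t_{k-1}\}$ (size $k$) and $V' = \{v_1,\dots,v_k\} \setminus \{v^*\} \cup \{v^*\}$ — in fact all $k$ of the $v_i$, since $B^*$ itself contains $s$ and its own witnesses $t_j$ — we get that $U' \cup V'$ induces $K_{k,k}$, contradicting the choice of $k$. The main obstacle, as the above groping shows, is getting the selection of the ``extremal'' rectangle exactly right so that the one-sided constraints in the definition of $\prec_{DL}$ (left endpoint non-increasing, right endpoint non-increasing, but only relative to $s$) line up with containment in the other rectangles; once the right extremal choice is pinned down, the rest is the case analysis already packaged in \cref{lem:chain3_all_ordered} plus the observation that bottomless rectangles are closed ``downward'' in $y$.
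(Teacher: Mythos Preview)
Your final argument is correct and is essentially the same as the paper's, with a minor dual twist: the paper fixes the $k-1$ DL-successors of $\phi(u)$ with \emph{largest} left endpoint and shows every DL-bulky rectangle must contain all of them, whereas you fix the DL-bulky rectangle $B^*$ with \emph{rightmost} left edge and show its $k-1$ witnesses are contained in every other DL-bulky rectangle. Both pivots exploit the same one-sided monotonicity (left endpoints only decrease along $\prec_{DL}$, right endpoints are already dominated by $s$, and bottomlessness handles the $y$-coordinate), and both yield the same $K_{k,k}$; the exploratory false starts in your write-up should of course be excised, but the eventual choice of extremal object and the containment verification are sound.
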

\begin{proof}
We present the proof for DL-bulky edges. The proofs for the other two cases are similar.
Let $B \subseteq N_G(u)$ be the nodes $v \in V$, such that $\{u, v\}$ is DL-bulky. 
Let $S' \subseteq \posucc^{DL}(\phi(u))$ be the $k-1$ segments $s' \in S'$ with the largest $\min(s'.x)$. Let $U'$ be the set of nodes represented by $S'$.

We will show that each $v \in B$ has $\{u\} \cup U'$ in its neighborhood, and therefore $|B| \leq k-1$.
Assume by contraposition that $|B| \geq k$. Let $v\in B$ be an arbitrary node.
For any $s'\in S'$, $\phi(u) \prec_{DL} s'$ and $\phi(u) \subseteq \phi(v)$ imply the following:
\begin{itemize}
    \item $s'.y \in \phi(v).y$. Since $\phi(u) \prec_{DL} s'$, we have $s'.y\leq \phi(u).y$, and since $\phi(v)$ contains $\phi(u)$, we have $\phi(u).y \in \phi(v).y$. Since $\phi(v).y$ is an interval that starts at $- \infty$, then $s'.y \in \phi(v).y$.
    \item $\max(s'.x) \leq \max(\phi(v).x)$. Similarly to the previous case, the implications give us the inequalities in $\max(s'.x) \leq \max(\phi(u).x) \leq \max(\phi(v).x)$
\end{itemize}

Let $s' \in S'$ be an arbitrary segment.
Since $\phi(v).y$ contains $s'$ on the $y$-axis,  $\max(\phi(v).x)$ is larger than any point in $s'$ on the $x$-axis, then to show that $\phi(v)$ contains $s'$, it only remains to show that $\min(\phi(v).x) \leq \min(s'.x)$. 
Since $S'$ was chosen to contain the segments in $\posucc^{DL}(\phi(u))$ with the largest $\min(s'.x)$, then $\min(\phi(v).x) \leq \min(s'.x)$ holds for $s' \in S'$, if it holds for any $k-1$ segments in $\posucc^{DL}(\phi(u))$. Since $\{u, v\}$ is DL-bulky, $\phi(v)$ contains at least $k-1$ segments in $\posucc^{DL}(\phi(u))$. This implies that $\phi(v)$ contains $S'$, and we have $U' \in N_G(v)$. Since this is true for all $v \in B$, then $B$ and $U'\cup\{u\}$ induce a $K_{k,k}$, which is a contradiction. 

The proof for DR-bulky edges is symmetric and the proof for C-bulky edges can easily be seen by selecting $S' \subseteq \posucc^{C}(\phi(u))$ to be the $k-1$ segments $s' \in S'$ with the smallest $s'.y$. Since every $s' \in S'$ is already contained in $\phi(u)$ on the $x$-axis, then the $y$ coordinate is the only factor determining which elements of $\posucc^{C}(\phi(u))$ are contained within a bottomless rectangle that contains $\phi(u)$. 

\end{proof}

\begin{lemma} \label{lem:chain3_thin}
For each $v \in V$, there are at most $6(k-1)$ thin edges incident to $v$. 
\end{lemma}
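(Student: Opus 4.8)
The plan is to fix a vertex $v \in V$, write $r = \phi(v)$ for its bottomless rectangle, and set $W = \{u \in U : \{u,v\}\text{ is a thin edge}\} \subseteq N_G(v)$, so that the claim reduces to showing $|W| \le 6(k-1)$. (If $k=1$ then $G$ has no edges and $W = \emptyset$, so we may assume $k\ge 2$.) The key first step is to observe that thinness is precisely an upper bound on the number of successors of $\phi(u)$ that lie inside $r$: for every $u \in W$, since $\{u,v\}$ is not DL-bulky, $r$ contains at most $k-2$ segments of $\posucc^{DL}(\phi(u))$; since it is not DR-bulky, at most $k-2$ segments of $\posucc^{DR}(\phi(u))$; and since it is not C-bulky, at most $k-2$ segments of $\posucc^{C}(\phi(u))$.

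Next I would encode the pairwise comparabilities among the segments $\{\phi(u) : u \in W\}$ — all of which are contained in $r$ — as an arc-colored tournament $D$ on vertex set $W$. For each unordered pair $\{u,u'\} \subseteq W$, \cref{lem:chain3_all_ordered} supplies an order among $\pset^{DL},\pset^{DR},\pset^{C}$ in which $\phi(u)$ and $\phi(u')$ are comparable; I fix one such order, orient the pair from the smaller segment to the larger one, and color the resulting arc by that order. The crucial estimate is that, for each $u \in W$ and each of the three colors, the out-degree of $u$ in that color is at most $k-2$: if $u \to u'$ is a DL-colored arc then $\phi(u)\prec_{DL}\phi(u')$, so $\phi(u') \in \posucc^{DL}(\phi(u))$, and moreover $\phi(u') \subseteq \phi(v) = r$ since $u' \in N_G(v)$; as $\phi$ is injective, distinct DL-out-neighbors of $u$ yield distinct segments of $\posucc^{DL}(\phi(u))$ contained in $r$, of which there are at most $k-2$ by the previous paragraph. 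The same holds for DR- and C-colored arcs, so every vertex of $D$ has total out-degree at most $3(k-2)$.

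Finally, since $D$ is a tournament on $W$, summing out-degrees gives $\binom{|W|}{2} = \sum_{u\in W}\deg_D^+(u) \le 3(k-2)\,|W|$, whence $|W| - 1 \le 6(k-2)$ and $|W| \le 6(k-2)+1 \le 6(k-1)$. The argument is short and I do not anticipate a genuine obstacle; the two points needing care are the off-by-one in the definition of bulky (which is what produces the $k-2$, and hence keeps us comfortably inside the claimed $6(k-1)$) and the fact that the per-color out-degree must be bounded using only the successors of $\phi(u)$ that are contained in $r$, rather than all successors.
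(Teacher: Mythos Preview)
Your proposal is correct and is essentially the same argument as the paper's: you both restrict to the segments $\phi(T)\subseteq\phi(v)$, use \cref{lem:chain3_all_ordered} to see every pair is comparable in some $\pset^{DL},\pset^{DR},\pset^{C}$, bound the number of successors of each $\phi(u)$ inside $\phi(v)$ by $k-2$ per order via thinness, and conclude $\binom{|T|}{2}\le 3(k-2)|T|$. The only cosmetic difference is that you package the counting as out-degrees in a colored tournament, whereas the paper sums comparable pairs in the three induced partial orders directly.
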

\begin{proof}
For a partial order $\pset$, we use $E(\pset)$ to denote the number of comparable pairs in $\pset$.

Let $T\subset N_G(v)$ be the set of nodes $u$, such that $\{u,v\}$ is thin.
Let $\pset_l, \pset_r, \pset_c$ be the partial orders $\pset^{DL}, \pset^{DR}, \pset^C$ induced on set $\phi(T)$. According to \cref{lem:chain3_all_ordered}, each pair of segments $s, s' \in \phi(T)$, is comparable in at least one of the partial orders.
We then have $|E(\pset_l) \cup E(\pset_r) \cup E(\pset_c)| \geq {|T| \choose 2}$.

Since for all $u \in T$, the edge $\{u,v\}$ is thin, there are at most $(k-2)$ DL-successors of $\phi(u)$ in $\phi(T)$, so we have $|E(\pset_l)| \leq (k-2)|T|$. Similarly, we have at most $(k-2)$ C-successors and $(k-2)$ DR-successors in $\phi(T)$. Therefore, $|E(\pset_r)|, |E(\pset_c)| \leq (k-2)|T|$. 
Combining the edges in the three partial orders, we get $|E(\pset_l) \cup E(\pset_r) \cup E(\pset_c)|\leq 3(k-2)|T|$. 
Putting these inequalities together, we have 
\[ |T|(|T|-1)/2 = {|T| \choose 2} \leq 3(k-2)|T|\]
which implies that $|T| \leq 6k-11<6(k-1).$
\end{proof}

\begin{theorem} \label{thm:chain3}
For all $n, m, k \in \N$,  $Z_{\chain^3}(m, n;k, k) \leq (3m+6n)(k-1)$.
\end{theorem}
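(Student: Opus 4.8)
The plan is to obtain the bound by a simple charging argument that combines the two counting lemmas already established. First I would fix a $K_{k,k}$-free graph $G = (U\cup V, E)\in\chain^3$ with $|U| = m$ and $|V| = n$, and use \cref{lem:chain3_is_brc} to pass to a segment bottomless rectangle containment representation $(\Se,\Bo,\phi)$ with $\phi\colon U\mapsto\Se,\ V\mapsto\Bo$; as in the setup we may assume no two segments share a $y$-coordinate (a harmless perturbation) and take $k$ minimal with $G$ being $K_{k,k}$-free. Then every edge of $G$ is, by definition, either \emph{bulky} (DL-, DR-, or C-bulky) or \emph{thin}, so $E$ is partitioned into $E_{\mathrm{bulky}}\dist E_{\mathrm{thin}}$ with no edge counted twice.

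Second, I would charge each bulky edge to its endpoint in $U$ and each thin edge to its endpoint in $V$. By \cref{lem:chain3_bulky}, each $u\in U$ is incident to at most $3(k-1)$ bulky edges, so $|E_{\mathrm{bulky}}|\le 3m(k-1)$. By \cref{lem:chain3_thin}, each $v\in V$ is incident to at most $6(k-1)$ thin edges, so $|E_{\mathrm{thin}}|\le 6n(k-1)$. Adding the two estimates gives $|E| = |E_{\mathrm{bulky}}| + |E_{\mathrm{thin}}| \le 3m(k-1) + 6n(k-1) = (3m+6n)(k-1)$, which is exactly the claimed inequality; specializing to $m=n$ yields $Z_{\chain^3}(n;k)\le 9n(k-1)$, the first bullet of \cref{thm:dichotomy}.

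With \cref{lem:chain3_bulky} and \cref{lem:chain3_thin} in hand there is no real obstacle here — the statement is literally the sum of the two one-sided bounds, and the only thing to be careful about is keeping the asymmetric constants attached to the correct side ($3(k-1)$ per $U$-vertex for bulky edges, $6(k-1)$ per $V$-vertex for thin edges). The genuine difficulty lives entirely in the supporting lemmas: \cref{lem:chain3_bulky} must show that a $U$-vertex with $k$ bulky edges of a single type already forces a $K_{k,k}$ (by exhibiting $k-1$ extremal successor segments that sit inside every bulky neighbor, using monotonicity of the partial order together with the bottomless structure on the $y$-axis), and \cref{lem:chain3_thin} must run a Turán/Dilworth-type double count on the three partial orders, which in turn rests on \cref{lem:chain3_all_ordered} guaranteeing that any two segments are comparable in at least one of $\pset^{DL},\pset^{DR},\pset^{C}$.
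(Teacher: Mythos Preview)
Your proposal is correct and matches the paper's own proof essentially verbatim: the paper also partitions $E$ into bulky and thin edges, applies \cref{lem:chain3_bulky} to bound the bulky edges by $3(k-1)m$ and \cref{lem:chain3_thin} to bound the thin edges by $6(k-1)n$, and sums. Your added commentary about where the real work lies (in the two supporting lemmas and in \cref{lem:chain3_all_ordered}) is accurate and helpful context, but the proof of \cref{thm:chain3} itself is exactly this two-line summation.
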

\begin{proof} Let $G = (U \cup V, E) \in \chain^3$ be a $K_{k, k}$-free graph, where $|U| = m$, $|V| = n$. 
    Note that we defined every edge in $E$ as either thin or bulky.
    According to \cref{lem:chain3_bulky}, the number of bulky edges in $E$ is at most $3(k-1) m$. According to \cref{lem:chain3_thin}, the number of thin edges in $E$ is at most $6(k-1) n$. This means that the total number of edges is $|E| \leq (3m+6n)(k-1)$.
\end{proof}

\begin{corollary} \label{cor:chain3_main}
     When $m = n$, we have $Z_{\chain^3}(n;k) \leq 9(k-1)n$.
\end{corollary}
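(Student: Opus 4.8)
The plan is to obtain the corollary directly from \cref{thm:chain3} by specializing its two-sided bound to the symmetric case. \Cref{thm:chain3} asserts that every $K_{k,k}$-free graph $G = (U \cup V, E) \in \chain^3$ with $|U| = m$ and $|V| = n$ satisfies $|E| \leq (3m + 6n)(k-1)$. Setting $m = n$ gives $|E| \leq (3n + 6n)(k-1) = 9n(k-1)$, which is exactly the claimed bound $Z_{\chain^3}(n;k) \leq 9(k-1)n$. So the entire proof is a one-line invocation of the theorem with equal part sizes.

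Since the substitution itself is immediate, it is worth recording where the constant $9 = 3 + 6$ comes from, as that is the part that required real work. Fixing a segment/bottomless-rectangle containment representation $(\Se, \Bo, \phi)$ of $G$ (available by \cref{lem:chain3_is_brc}), the edge set is partitioned into bulky and thin edges. \Cref{lem:chain3_bulky} bounds the bulky edges incident to each $u \in U$ by $3(k-1)$ — one factor $(k-1)$ for each of the three partial orders $\pset^{DL}, \pset^{DR}, \pset^{C}$ — so there are at most $3m(k-1)$ bulky edges in total. \Cref{lem:chain3_thin} bounds the thin edges incident to each $v \in V$ by $6(k-1)$, via the counting inequality $\binom{|T|}{2} \leq 3(k-2)|T|$ for the set $T$ of thin neighbours of $v$, so there are at most $6n(k-1)$ thin edges. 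Summing the two contributions and putting $m = n$ yields the corollary.

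There is essentially no obstacle in the corollary itself; the genuine difficulties lie upstream and are already resolved in the excerpt. The structural input is \cref{lem:chain3_is_brc}, the equivalence between $\chain^3$ and segment/bottomless-rectangle containment graphs, which supplies the geometry the whole argument rests on. The combinatorial heart is the double counting in \cref{lem:chain3_thin}: one must show that if a single vertex $v$ had more than $6(k-1)$ thin neighbours, the corresponding segments would be pairwise comparable in one of the three orders (\cref{lem:chain3_all_ordered}), while each order alone contributes only $O(k|T|)$ comparable pairs, forcing $|T| = O(k)$ and contradicting the count. With those lemmas in hand, \cref{cor:chain3_main} is merely the specialization $m = n$ of \cref{thm:chain3}.
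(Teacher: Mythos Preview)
Your proposal is correct and matches the paper's approach exactly: the corollary is simply \cref{thm:chain3} specialized to $m=n$, and the paper does not even spell out a separate proof. Your additional recap of how the constants $3$ and $6$ arise is accurate and consistent with the paper's argument in \cref{lem:chain3_bulky} and \cref{lem:chain3_thin}.
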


\dichotomyThm*
\begin{proof}
The first part will be proved in \cref{sec:chain3}. The second part is a simple corollary of existing results: Chazelle \cite{chazelle1990lower} constructed $K_{2,2}$-free $\prig$ graphs on $n'$ vertices that have at least $\Omega(n' \cdot \frac{\log n'}{\log \log n'})$ edges. Chan and Har-Peled~\cite{chan2023number} assert the applicability of this result in graph theory, since the original context was pointer machines. By a simple modification that we show in \cref{lem:lb_duplication}, there exist $K_{k,k}$-free $\prig$ graphs on $N$ vertices with at least $\Omega(N\cdot (k-1) \cdot \frac{\log \frac{N}{k-1}}{\log \log \frac{N}{k-1}}) = \Omega(N\cdot k \cdot \frac{\log N}{\log \log N})$ edges.
Since $\prig \subseteq \conv^2$ (\cref{lem:prig_in_conv2}), which is contained in $\chain^4$ \cite{chaplick2014intersection}, then the constructed graphs give a lower bound for Zarankiewicz numbers for $\chain^4$ graphs.
\end{proof}

\section{Grid Intersection Graphs} \label{sec:gig}

This section will use a more formal definition of grid intersection graphs.
We say that $(\Hor, \Ver, \phi)$ is a grid intersection representation of a bipartite graph $G = (U \cup V, E)$, if $\Hor$, $\Ver$ are sets of horizontal and vertical segments in $\mathbb{R}^2$ respectively, $\phi: U \mapsto \Hor, V \mapsto \Ver$, and for all $u\in U$, $v\in V$, we have $\{u, v\} \in E$ if and only if $\phi(u)$ and $\phi(v)$ intersect. A bipartite graph $G$ is a grid intersection graph ($\gig$) if it has a grid intersection representation. 
For a segment $\horver \in \Hor \cup \Ver$, let $N(\horver) = \{\horver' \in \Hor \cup \Ver: \horver' \cap \horver \neq \emptyset \}$, e.g the set of segments that intersect $\horver$. 


Let $k$ be the minimum positive integer such that $G$ does not have $K_{k, k}$ as a subgraph. 

\begin{observation} \label{obs:degeneracy}
    If $G = (U \cup V, E) \in \gig$ is $27(k-1)$-degenerate, then $|E| \leq 27(k-1)(|U| + |V|)$.
\end{observation}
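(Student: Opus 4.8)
The plan is to obtain this statement immediately from the general degeneracy-counting bound already recorded, namely \cref{obs: degenrate counting}, which asserts that every $d$-degenerate graph $H$ has at most $d\cdot|V(H)|$ edges. The only thing that needs checking is that this bound is being instantiated correctly in the bipartite setting, so the argument is pure bookkeeping.

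Concretely, the steps are: first, observe that the vertex set of $G = (U\cup V, E)$ is $V(G) = U\cup V$, so $|V(G)| = |U|+|V|$; second, note that the hypothesis says precisely that $G$ is $d$-degenerate with $d = 27(k-1)$; third, substitute into \cref{obs: degenrate counting} to get $|E|\le d\cdot|V(G)| = 27(k-1)(|U|+|V|)$, which is exactly the claimed inequality.

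There is therefore no real obstacle in the statement itself — it merely fixes the target shape of the bound. The genuine work lies in the lemmas that follow, which must show that a $K_{k,k}$-free grid intersection graph is in fact $27(k-1)$-degenerate, i.e., that every induced subgraph contains a vertex of degree at most $27(k-1)$. I expect that argument to pick an extremal segment (for instance a topmost vertical segment, or one with a rightmost endpoint), partition its neighbourhood into a constant number of groups according to the relative geometry of the segments crossing it, and then run a biclique-forcing argument on each group in the style of \cref{sr_degree}; choosing the group sizes so that one of them is forced to contain a $K_{k,k}$ is where the constant $27$ will emerge. Once that degeneracy bound is in hand, \cref{obs:degeneracy} converts it, by setting $n=m$, into the edge bound $Z_{\gig}(n;k)\le 54n(k-1)$ of \cref{thm:intro_GIG}.
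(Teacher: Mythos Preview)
Your proof of the observation is correct and matches the paper's treatment: the observation is a direct instantiation of \cref{obs: degenrate counting} with $d = 27(k-1)$ and $|V(G)| = |U|+|V|$, and the paper does not even spell this out. Your side remark about how the degeneracy itself will be established is off, though---the paper does not argue via an extremal segment and a partition of its neighbourhood in the style of \cref{sr_degree}, but instead sets up a credit-payment scheme (Algorithms~\ref{alg:close_type} and~\ref{alg:estranged_type}) and shows that every vertical segment of large degree receives at least as many credits as its degree, forcing some vertex of small degree to exist; that, however, is outside the scope of the statement you were asked to prove.
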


The proof is done via charging arguments. 

\subsection{Charging Arguments}


For each segment $\horver \in \Hor \cup \Ver$, let $\horver.x$ and $\horver.y$ denote its projections to the $x$-axis (horizontal) and $y$-axis (vertical) respectively. Note that the projections are closed intervals and points.
For an interval $[a, b]$, let $\min([a, b]) = a$ and $\max([a, b]) = b$.
For two intervals $I$ and $I'$, we say that $I < I'$, if $\max(I) < \min(I')$.

For any segment $\horver \in \Hor \cup \Ver$, we define $\dir_{down}(\horver) = \{\horver' \in \Hor \cup \Ver: \horver'.y < \horver.y \} $, i.e. the set of segments whose projection to the $y$ axis is entirely smaller than that of $\horver$. Similarly, we define $\dir_{up}(\horver) = \{\horver' \in \Hor \cup \Ver: \horver'.y > \horver.y \} $, $\dir_{left}(\horver) = \{\horver' \in \Hor \cup \Ver: \horver'.x < \horver.x \} $, $\dir_{right}(\horver) = \{\horver' \in \Hor \cup \Ver: \horver'.x > \horver.x \} $. For a set $S$, we define $\dir_{up}(S) = \bigcap_{\horver \in S} \dir_{up}(\horver) $ and similarly for the other directions.

\begin{definition} \label{def:down_heavy}
    We say that a vertical segment $\ver \in \Ver$ is \textbf{down-heavy} with respect to $\hor \in \Hor$, if $\hor.y \subset \ver.y$ and $|N(\ver)\cap \dir_{down}(\hor)| \geq 3(k-1)$.
\end{definition}

\begin{definition} \label{def:up_heavy}
    We say that a vertical segment $\ver \in \Ver$ is \textbf{up-heavy} with respect to $\hor \in \Hor$, if $\hor.y \subset \ver.y$ and $|N(\ver)\cap \dir_{up}(\hor)| \geq 3(k-1)$.
\end{definition}

\begin{figure}[h]
    \centering
    \includegraphics[scale=0.15]{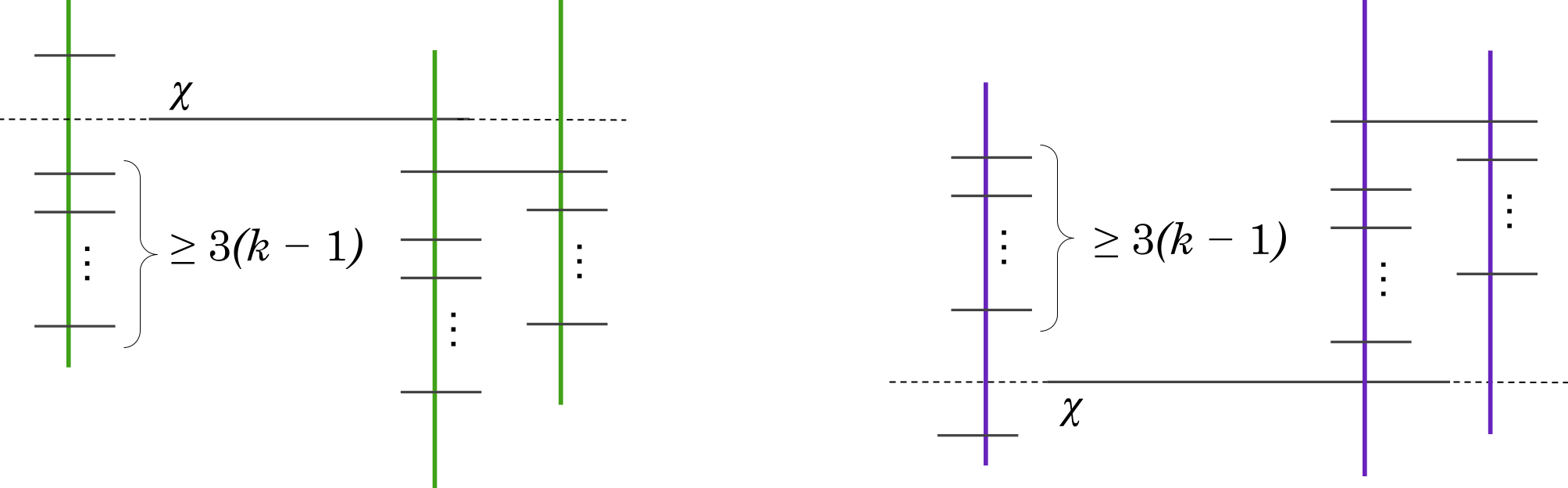}
    \caption{down-heavy segments are represented by the green vertical segments on the left and up-heavy segments by the purple ones on the right. }
    \label{fig:down_heavy}
\end{figure} 

To show that a node $w \in U \cup V$, such that $|N(w)| \leq 27(k-1)$ exists in any $\gig$ graph, we present
a payment scheme algorithm whereby each horizontal segment starts with $27(k-1)$ credits, which is subsequently paid to vertical segments. We will see that each vertical segment $\ver \in \Ver$ whose degree is at least $27(k-1)$, receives at least $|N(\ver)|$ credits.

\begin{algorithm}[H]
\DontPrintSemicolon
\caption{Neighbor payments}
\label{alg:close_type}
    \textbf{From each} $\hor\in \Hor$\;
    \Indp
        \textbf{Pay} $\frac{9}{2}$ credits to each of the following vertical segments in $N(\hor)$:\;
        \Indp
        $k-1$ leftmost up-heavy vertical segments with respect to $\hor$\;
        $k-1$ rightmost up-heavy vertical segments w.r.t. $\hor$\;
        $k-1$ leftmost down-heavy vertical segments w.r.t. $\hor$\;
        $k-1$ rightmost down-heavy vertical segments w.r.t. $\hor$\;
        \Indm
    \Indm
\end{algorithm}

\begin{algorithm}[H]
\DontPrintSemicolon
\caption{Further payments}
\label{alg:estranged_type}
    \textbf{From each} $\hor\in \Hor$\;
    \Indp
        \textbf{Pay} $\frac{9}{4}$ credits to each of the following segments $\ver$, where $\hor.y \subset \ver.y$:\;
        \Indp
        $k-1$ rightmost up-heavy vertical segments in $\dir_{left}(\hor)$\;
        $k-1$ rightmost down-heavy vertical segments in $\dir_{left}(\hor)$\;
        $k-1$ leftmost up-heavy vertical segments in $\dir_{right}(\hor)$\;
        $k-1$ leftmost down-heavy vertical segments in $\dir_{right}(\hor)$\;
        \Indm
    \Indm
\end{algorithm}

\begin{figure}[h]
    \centering
    \includegraphics[scale=0.16]{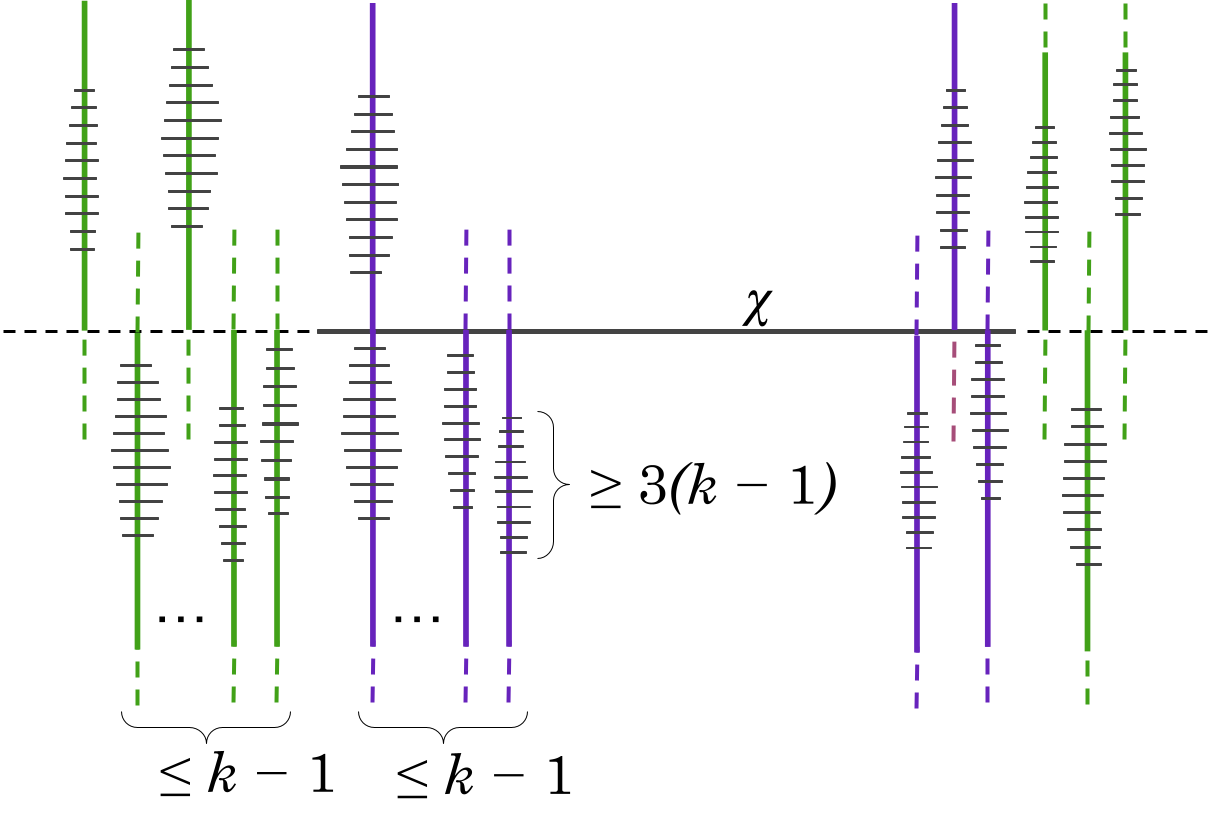}
    \caption{\Cref{alg:close_type} credit receivers are shown in purple and \Cref{alg:estranged_type} credit receivers in green.}
    \label{fig:GIG_deg_algo_credits}
\end{figure} 

Now, we will analyze the credits received by an arbitrary vertical segment $\ver \in \Ver$.  

For a set of horizontal segments $S \subseteq \Hor$, let $S.y$ denote the interval $[min, max]$, where $min = \min_{\hor \in S}(\hor.y)$ and $max = \max_{\hor \in S}(\hor.y)$.
\begin{lemma} \label{lem:payments_from_interval}
    Let $S \subset N(\ver)$, such that 
    \begin{itemize}
        \item $|S| = 3(k-1)$
        \item $|N(\ver) \cap \dir_{up} (S)| \geq 3(k-1)$ (Intuition: $S$ is not among the highest elements of $N(\ver)$)
        \item $|N(\ver) \cap \dir_{down} (S)| \geq 3(k-1)$ (Intuition: $S$ is not among the lowest elements of $N(\ver)$)
    \end{itemize}
    Then at least $\frac{9}{2} (k-1)$ credits are paid to $\ver$ from horizontal segments $\hor \in \Hor$, such that $\hor.y \in S.y$.
\end{lemma}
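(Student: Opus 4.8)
The plan is to argue by contradiction, using the fact that $G$ is $K_{k,k}$-free. Write $B := \{\hor\in\Hor : \hor.y\in S.y\}$ for the set of horizontal segments whose height lies in the band $S.y$ (so $S\subseteq B$). Two preliminary observations carry most of the bookkeeping. First, $\ver$ has a neighbor in $\dir_{up}(S)$ and a neighbor in $\dir_{down}(S)$ (there are at least $3(k-1)\ge 1$ of each, since we may assume $k\ge 2$), and every neighbor of $\ver$ meets $\ver$; hence $\ver.y$ strictly contains $S.y$, and in particular $\hor.y\subset\ver.y$ for every $\hor\in B$. Second, for $\hor\in B$ we have $\min(S.y)\le\hor.y\le\max(S.y)$, so $\dir_{down}(\hor)\supseteq\dir_{down}(S)$ and $\dir_{up}(\hor)\supseteq\dir_{up}(S)$, which gives $|N(\ver)\cap\dir_{down}(\hor)|\ge 3(k-1)$ and $|N(\ver)\cap\dir_{up}(\hor)|\ge 3(k-1)$. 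Combining these, $\ver$ is simultaneously \emph{up-heavy and down-heavy with respect to every} $\hor\in B$ (\cref{def:down_heavy}, \cref{def:up_heavy}).

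Next I reduce to a counting statement. The payment scheme of \cref{alg:close_type} and \cref{alg:estranged_type} is symmetric under reflecting the plane in a horizontal line, which interchanges the ``up'' lists with the ``down'' lists, and a payment from an up-list is accounted separately from one from a down-list. So it suffices to prove that $\ver$ collects at least $\tfrac94(k-1)$ credits through the two down-heavy lists of \cref{alg:close_type} and the two down-heavy lists of \cref{alg:estranged_type}, summed over all payers in $B$: the up-half then follows by symmetry, and the two halves add. Since each such payment is at least $\tfrac94$, it is enough to exhibit $k-1$ distinct segments of $B$ that make a down-payment to $\ver$. Assume this fails, so at most $k-2$ segments of $B$ down-pay $\ver$; as $|B|\ge 3(k-1)$, at least $2k-1$ segments $\hor\in B$ are \emph{bad}, meaning $\ver$ is down-heavy with respect to $\hor$ yet $\hor$ appears in none of the (at most two) down-lists for which $\ver$ is eligible.

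The point of badness is that $\ver$ is then \emph{buried} among the down-heavy-with-respect-to-$\hor$ vertical segments: unwinding the lists, there is a set of $k-1$ such vertical segments whose $x$-coordinates all lie strictly between $\ver.x$ and one endpoint of $\hor$, on one side of $\ver$ — on both sides when $\ver$ meets $\hor$ (failure of the leftmost- and rightmost-lists of \cref{alg:close_type}), and on the side of $\hor$ facing $\ver$ when $\ver$ lies entirely to one side of $\hor$ (failure of the relevant list of \cref{alg:estranged_type}). Pigeonholing the $\ge 2k-1$ bad segments by which side of $\ver$ this ``wall'' lies on, at least $k$ of them, say $h_1,\dots,h_k$, have walls on a common side — say to the left of $\ver$.

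It remains — and this is the crux — to turn $h_1,\dots,h_k$ together with their walls into a $K_{k,k}$, contradicting the minimality of $k$. The idea is to fix the $h_j$ whose wall-defining endpoint is \emph{closest} to $\ver$ and take $k-1$ wall segments $\ver_1',\dots,\ver_{k-1}'$ of that $h_j$; their $x$-coordinates then lie between $\ver.x$ and the corresponding endpoint of every $h_i$, so each $\ver_i'$ crosses every $h_i$ it is not separated from, and for the remaining $h_i$ one falls back on the fact that $\ver_i'$ is down-heavy with respect to its $h_j$ — so $\ver_i'.y\ni h_j.y$ and $\ver_i'$ has at least $3(k-1)$ neighbors with smaller $y$-coordinate than $h_j$. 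Discarding the at most $2(k-1)$ of these low neighbors that stick out too far to the left or right, one concludes that $\{\ver\}\cup\{\ver_1',\dots,\ver_{k-1}'\}$ together with $k$ of the remaining horizontal neighbors is a copy of $K_{k,k}$. The main obstacle is precisely this step: the wall of each bad $h_j$ is defined only relative to $h_j$, so the work lies in producing a single wall that simultaneously meets, or shares enough low neighbors with, all $k$ of the $h_j$; this forces one to track the three position cases — $\ver$ to the left of, to the right of, or meeting a bad segment — with care, and it is here that the factor $3$ in the definition of ``heavy'' gets consumed.
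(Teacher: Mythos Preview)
Your preliminary observations are correct: $\ver$ is indeed both up-heavy and down-heavy with respect to every $\hor$ with $\hor.y\in S.y$, and the up/down symmetry reduction to showing $\tfrac{9}{4}(k-1)$ credits from the down-lists is valid. But the proposal is not a proof --- you yourself flag the contradiction step as ``the main obstacle,'' and the sketch you offer for it does not go through.

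Specifically, after pigeonholing to $k$ bad segments $h_1,\dots,h_k$ with walls on (say) the left, you fix one $h_j$ and take its wall $\ver'_1,\dots,\ver'_{k-1}$. The claim that these $\ver'_i$ cross the remaining $h_\ell$ fails on the $y$-axis: each $\ver'_i$ is only guaranteed to satisfy $h_j.y\in\ver'_i.y$, with no control over whether $h_\ell.y\in\ver'_i.y$ for $\ell\neq j$. Your fallback --- use that $\ver'_i$ is down-heavy with respect to $h_j$ and discard at most $2(k-1)$ of its low neighbours --- only produces horizontal neighbours of a \emph{single} $\ver'_i$; a $K_{k,k}$ requires $k$ horizontal segments adjacent to \emph{all} of $\ver,\ver'_1,\dots,\ver'_{k-1}$ simultaneously, and there is no reason those low neighbours of $\ver'_i$ even meet $\ver$. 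The coordination problem you name at the end is exactly the content of the lemma, and nothing in the sketch resolves it.

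The paper's argument is organised differently: no up/down symmetry, but a case split on whether at least $k-1$ members of $S$ already pay $\ver$ via \cref{alg:close_type}. If not, a separate structural lemma (\cref{lem:gig_box}) extracts, from the many ``stingy'' segments of $S$, a set of $k-1$ horizontal segments in the band $S.y$ lying strictly to the right of $\ver$ --- and symmetrically on the left --- that all share a common $x$-coordinate with the stingy segments. That shared $x$-coordinate is precisely what supplies the missing coordination: it is what lets one assemble the $K_{k,k}$ that forces each of these $2(k-1)$ outside segments to pay $\ver$ via \cref{alg:estranged_type}, yielding $\tfrac{9}{4}\cdot 2(k-1)=\tfrac{9}{2}(k-1)$ in total.
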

The proof is deferred to $\cref{sec:gig_appendix}$.

\begin{lemma} \label{lem:vertical_credits}
    If $|N(\ver)| \geq 27(k-1)$, then $\ver$ receives at least $|N(\ver)|$ credits.
\end{lemma}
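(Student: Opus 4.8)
<br>

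The plan is to bound the total number of credits received by a vertical segment $\ver$ with $|N(\ver)| \geq 27(k-1)$ by cutting its neighborhood $N(\ver)$ into consecutive horizontal bands and applying \Cref{lem:payments_from_interval} to enough of them. First I would order the horizontal segments in $N(\ver)$ by their $y$-coordinate (all distinct, by general position as in the $\chain^3$ section, or we can assume it without loss of generality), say $\hor_1, \hor_2, \dots, \hor_{|N(\ver)|}$ from bottom to top. Write $t = |N(\ver)|$ and partition this ordered list greedily into blocks of exactly $3(k-1)$ consecutive segments: $S_1$ is the bottom $3(k-1)$, $S_2$ the next $3(k-1)$, and so on, giving $\lfloor t / (3(k-1)) \rfloor$ full blocks (a leftover partial block at the top is simply ignored). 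Because $t \geq 27(k-1) = 9 \cdot 3(k-1)$, there are at least $9$ full blocks.

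Next I would identify which of these blocks $S_i$ satisfy the three hypotheses of \Cref{lem:payments_from_interval}. The first hypothesis, $|S_i| = 3(k-1)$, holds by construction. For the second, $|N(\ver) \cap \dir_{up}(S_i)| \geq 3(k-1)$: every horizontal segment of $N(\ver)$ lying strictly above the whole block $S_i$ is in $\dir_{up}(S_i)$, and there are exactly $t - i \cdot 3(k-1)$ such segments among $\hor_{i\cdot 3(k-1)+1}, \dots, \hor_t$ (one should double-check that "above in $y$-coordinate" inside $N(\ver)$ implies membership in $\dir_{up}(\hor)$ for each $\hor \in S_i$ — this is where I want to be careful, since $\dir_{up}$ is defined by strict interval comparison $\horver'.y > \horver.y$; for horizontal segments these projections are single points, so the $y$-order is exactly the interval order and the implication is clean). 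Hence the second hypothesis holds whenever $i \le (t/(3(k-1))) - 1$, i.e. for all but the topmost full block. Symmetrically, $|N(\ver) \cap \dir_{down}(S_i)| \geq 3(k-1)$ holds for all but the bottommost full block. So every full block except possibly the top one and the bottom one satisfies all three hypotheses; since there are at least $9$ full blocks, at least $7$ of them are "good".

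For each good block $S_i$, \Cref{lem:payments_from_interval} guarantees at least $\frac{9}{2}(k-1)$ credits paid to $\ver$ from horizontal segments $\hor$ with $\hor.y \in S_i.y$. The bands $S_i.y$ have pairwise disjoint interiors (consecutive blocks share at most a single endpoint $y$-value, and a horizontal segment has a single $y$-coordinate, so each $\hor$ contributes to the count of at most one band — here I would note that if a boundary $y$-value coincides with some $\hor.y$ we can break the tie so that $\hor$ is counted in only one $S_i.y$, or simply observe that the lemma's conclusion counts $\hor$ with $\hor.y \in S_i.y$ and the double-counted boundary segments form a negligible set that we can discard before defining the blocks). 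Therefore the credits from distinct good blocks are to distinct horizontal segments, hence add up: $\ver$ receives at least $7 \cdot \frac{9}{2}(k-1) = \frac{63}{2}(k-1) \geq 27(k-1) \ge |N(\ver)|$? This last inequality fails in general since $|N(\ver)|$ may be much larger than $27(k-1)$, so the argument must be tightened: rather than taking a fixed number of blocks, I would use all $\lfloor t/(3(k-1))\rfloor - 2 \geq t/(3(k-1)) - 3$ good blocks, yielding at least $\frac{9}{2}(k-1)\bigl(t/(3(k-1)) - 3\bigr) = \frac{3t}{2} - \frac{27}{2}(k-1)$ credits; since $t \geq 27(k-1)$ we have $\frac{3t}{2} - \frac{27}{2}(k-1) \geq \frac{3t}{2} - \frac{t}{2} = t = |N(\ver)|$, which is exactly what we need. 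The main obstacle is the bookkeeping around block boundaries and confirming that the payment lemma's hypotheses are met by "all but the two extreme blocks" — i.e. correctly translating the intuitive statements in \Cref{lem:payments_from_interval} (''$S$ is not among the highest/lowest elements'') into the clean counting bound on $\dir_{up}(S_i)$ and $\dir_{down}(S_i)$, and ensuring no horizontal segment is counted toward two different blocks' payments.
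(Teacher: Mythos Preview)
Your proposal is correct and follows essentially the same approach as the paper: partition $N(\ver)$ (ordered by $y$) into consecutive blocks of size $3(k-1)$, discard the top and bottom blocks so that the remaining $\ell \geq t/(3(k-1))-3$ blocks satisfy the hypotheses of \Cref{lem:payments_from_interval}, and conclude that $\ver$ receives at least $\tfrac{9}{2}(k-1)\ell \geq \tfrac{3t}{2}-\tfrac{27}{2}(k-1) \geq t$ credits. Your worry about boundary double-counting is unnecessary: since the horizontal segments in $N(\ver)$ have distinct $y$-coordinates and the blocks are disjoint, the closed intervals $S_i.y$ are genuinely pairwise disjoint (there is a gap between $\max S_i.y$ and $\min S_{i+1}.y$), so no paying segment $\hor$ can have $\hor.y$ in two of them.
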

\begin{proof}
    We choose the maximum number $\ell$ of disjoint sets $S_1, \ldots S_{\ell}$ in $N(\ver)$, such that every set $S_i$ satisfies the conditions of \cref{lem:payments_from_interval}. Since each set receives $\frac{9}{2} (k-1)$ credits from a distinct set of horizontal segments, then the total amount in credits $\ver$ receives is at least $\frac{9}{2} (k-1) \ell$. 

    It remains to see that $\frac{9}{2} (k-1) \ell \geq |N(\ver)|$. Note that $\ell \geq \lfloor \frac{|N(\ver)| - 3(k-1) - 3(k-1)}{3(k-1)}\rfloor \geq \frac{|N(\ver)| - 9(k-1)}{3(k-1)}$. We get that $\frac{9}{2} (k-1) \ell \geq \frac{9}{2} (k-1) \frac{|N(\ver)| - 9(k-1)}{3(k-1)} = \frac{3}{2}|N(\ver)| - \frac{27}{2}(k-1) = |N(\ver)| + \frac{1}{2}(|N(\ver)| - 27(k-1))$. Since according to our assumption $|N(\ver)| \geq 27(k-1)$, then this result is at least $|N(\ver)|$. 
\end{proof}

\begin{lemma} \label{lem:gig_degenerate}
    Any $G = (U \cup V, E) \in \gig$ is $27(k-1)$-degenerate.
\end{lemma}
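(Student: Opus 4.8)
The plan is to put together the two payment schemes (\Cref{alg:close_type} and \Cref{alg:estranged_type}) with the credit accounting already established. First I would fix an arbitrary $K_{k,k}$-free grid intersection graph $G=(U\cup V,E)$ with representation $(\Hor,\Ver,\phi)$, and argue that it suffices to exhibit a single vertex $w\in U\cup V$ with $|N(w)|\le 27(k-1)$; since the class is closed under induced subgraphs, applying this to every induced subgraph yields $27(k-1)$-degeneracy. To find such a vertex, I would run the total payment scheme: every horizontal segment $\hor\in\Hor$ disburses a fixed budget of $27(k-1)$ credits (at most $4(k-1)\cdot\frac92 + 4(k-1)\cdot\frac94 = 18(k-1)+9(k-1)=27(k-1)$, so the budget is never exceeded), and the total number of credits in the system is exactly $27(k-1)|\Hor|$.

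The core of the argument is then a double-counting dichotomy. Suppose for contradiction that \emph{every} vertex has degree at least $27(k-1)+1$. In particular every vertical segment $\ver\in\Ver$ has $|N(\ver)|\ge 27(k-1)$, so by \Cref{lem:vertical_credits} it receives at least $|N(\ver)|$ credits. Summing over $\ver\in\Ver$, the total credit received is at least $\sum_{\ver\in\Ver}|N(\ver)| = |E|$. On the other hand, the total credit in the system is $27(k-1)|\Hor|$, and since every horizontal segment also has degree at least $27(k-1)+1 > 27(k-1)$, we get $|E| = \sum_{\hor\in\Hor}|N(\hor)| > 27(k-1)|\Hor|$. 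Comparing the two bounds on $|E|$ gives $27(k-1)|\Hor| \ge |E| > 27(k-1)|\Hor|$, a contradiction. Hence some vertex has degree at most $27(k-1)$, as required. (One should double-check the degenerate case $k=1$, where $K_{1,1}$-freeness forces $E=\emptyset$ and the claim is vacuous, and the small cases where $|\Ver|$ or $|\Hor|$ is too small for \Cref{lem:vertical_credits} to apply, which are handled by the same degree inequality.)

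The main obstacle is not in this final bookkeeping lemma itself but in verifying that the payment scheme is well-defined and that \Cref{lem:payments_from_interval} (hence \Cref{lem:vertical_credits}) actually holds — i.e.\ that a vertical segment which is ``sandwiched'' in $N(\ver)$ genuinely collects $\frac92(k-1)$ credits from horizontal segments at the relevant heights, using the $K_{k,k}$-freeness to rule out too many heavy segments piling up on one side. That content is isolated in \Cref{lem:payments_from_interval}, which we may assume; given it, the proof of \Cref{lem:gig_degenerate} is exactly the charging-balance computation above, and the only thing to be careful about is that the per-segment payout of $27(k-1)$ is an upper bound on what each $\hor$ spends, so that the conservation inequality points in the right direction.
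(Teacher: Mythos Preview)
Your proposal is correct and follows essentially the same approach as the paper: assume for contradiction that every vertex has degree at least $27(k-1)+1$, observe that each horizontal segment pays out at most $4(k-1)\cdot\tfrac{9}{2}+4(k-1)\cdot\tfrac{9}{4}=27(k-1)$ credits (strictly less than its degree), while by \Cref{lem:vertical_credits} each vertical segment receives at least its degree in credits, and compare the two sums against $|E|$ to derive a contradiction. Your explicit computation of the $27(k-1)$ budget and the remark on closure under induced subgraphs are helpful clarifications, but the argument is otherwise identical to the paper's.
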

\begin{proof}
    Assume by contradiction that there is a graph $G = (U \cup V, E) \in \gig$, where no such node exists.
    Since according to our assumption, for any node $w \in U \cup V$, we have $|N(w)| \geq 27(k-1) + 1$, then by running \cref{alg:close_type} and \cref{alg:estranged_type}, we see that for each horizontal node, fewer credits are given out than their degree. This implies that the amount in credits given by these algorithms is smaller than $|E|$.

    For each vertical node $\ver \in \Ver$, on the other hand, according to \cref{lem:vertical_credits}, the amount in credits received is at least that of their degree. This implies that the total amount in credits received from these algorithms greater than or equal to $|E|$. This is a contradiction.
\end{proof}

\begin{theorem} \label{thm:gig}
For all $n, m, k \in \N$,  $Z_{\gig}(m,n;k, k) \leq 27(m+n)(k-1)$.  
\end{theorem}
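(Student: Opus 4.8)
The plan is to deduce the bound immediately from the degeneracy result already established, combined with the elementary edge count for degenerate graphs. Concretely, let $G = (U \cup V, E) \in \gig$ be a $K_{k,k}$-free grid intersection graph with $|U| = m$ and $|V| = n$. First I would invoke \cref{lem:gig_degenerate} to conclude that $G$ is $27(k-1)$-degenerate: every induced subgraph of $G$ has a vertex of degree at most $27(k-1)$. Then I would apply \cref{obs: degenrate counting} (equivalently the form stated in \cref{obs:degeneracy}), which says that any $d$-degenerate graph on a vertex set $W$ has at most $d \cdot |W|$ edges; the standard argument is to repeatedly peel off a vertex of degree $\le d$, charging its incident edges to it, so that $|E| \le d \cdot |V(G)|$. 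Taking $d = 27(k-1)$ and $|V(G)| = m + n$ yields $|E| \le 27(k-1)(m+n)$, which is exactly the claimed inequality. Specializing to $m = n$ would recover the constant $54$ of \cref{thm:intro_GIG}.

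Since the substantive content has been front-loaded into the preceding lemmas, there is no real obstacle left at this final step — it is a two-line corollary. If one were proving the theorem from scratch, the genuine difficulty is entirely contained in \cref{lem:gig_degenerate}: designing the charging scheme (\cref{alg:close_type} and \cref{alg:estranged_type}) so that every horizontal segment disburses only $27(k-1)$ credits while every high-degree vertical segment collects at least its own degree, and verifying via \cref{lem:payments_from_interval} and \cref{lem:vertical_credits} that the books balance, which in turn relies on the geometric fact that a vertical segment cannot be simultaneously "left/right heavy" in too many ways without forcing a $K_{k,k}$. But all of that is available to me here, so the proof of \cref{thm:gig} is just the bookkeeping combination described above.
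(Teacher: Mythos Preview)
Your proposal is correct and matches the paper's own proof essentially verbatim: the paper simply writes that the result follows from \cref{lem:gig_degenerate} and \cref{obs: degenrate counting}. There is nothing to add.
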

\begin{proof}
    The result follows from \cref{lem:gig_degenerate} and \cref{obs: degenrate counting}.
\end{proof}

\introGIG*
\begin{proof}
    The result follows from \cref{thm:gig}, when $n = m$
\end{proof}

\bibliography{ref}

\begin{thebibliography}{10}

\bibitem{bellantoni1993grid}
Stephen~J. Bellantoni, Irith~Ben{-}Arroyo Hartman, Teresa~M. Przytycka, and Sue Whitesides.
\newblock Grid intersection graphs and boxicity.
\newblock {\em Discret. Math.}, 114(1-3):41--49, 1993.
\newblock \href {https://doi.org/10.1016/0012-365X(93)90354-V} {\path{doi:10.1016/0012-365X(93)90354-V}}.

\bibitem{Bohman_2010}
Tom Bohman and Peter Keevash.
\newblock The early evolution of the {H}-free process.
\newblock {\em Inventiones Mathematicae}, 181(2):291–336, May 2010.
\newblock URL: \url{http://dx.doi.org/10.1007/s00222-010-0247-x}, \href {https://doi.org/10.1007/s00222-010-0247-x} {\path{doi:10.1007/s00222-010-0247-x}}.

\bibitem{bourneuf2023polynomial}
Romain Bourneuf, Matija Buci{\'c}, Linda Cook, and James Davies.
\newblock On polynomial degree-boundedness.
\newblock {\em Advances in Combinatorics}, 2024.
\newblock URL: \url{http://dx.doi.org/10.19086/aic.2024.5}, \href {https://doi.org/10.19086/aic.2024.5} {\path{doi:10.19086/aic.2024.5}}.

\bibitem{chan2023number}
Timothy~M. Chan and Sariel Har{-}Peled.
\newblock On the number of incidences when avoiding an induced biclique in geometric settings.
\newblock In Nikhil Bansal and Viswanath Nagarajan, editors, {\em Proceedings of the 2023 {ACM-SIAM} Symposium on Discrete Algorithms, {SODA} 2023, Florence, Italy, January 22-25, 2023}, pages 1398--1413. {SIAM}, 2023.
\newblock URL: \url{https://doi.org/10.1137/1.9781611977554.ch50}, \href {https://doi.org/10.1137/1.9781611977554.CH50} {\path{doi:10.1137/1.9781611977554.CH50}}.

\bibitem{DBLP:conf/soda/ChanH23}
Timothy~M. Chan and Sariel Har{-}Peled.
\newblock On the number of incidences when avoiding an induced biclique in geometric settings.
\newblock In Nikhil Bansal and Viswanath Nagarajan, editors, {\em Proceedings of the 2023 {ACM-SIAM} Symposium on Discrete Algorithms, {SODA} 2023, Florence, Italy, January 22-25, 2023}, pages 1398--1413. {SIAM}, 2023.
\newblock URL: \url{https://doi.org/10.1137/1.9781611977554.ch50}, \href {https://doi.org/10.1137/1.9781611977554.CH50} {\path{doi:10.1137/1.9781611977554.CH50}}.

\bibitem{DBLP:conf/compgeom/ChanKS25}
Timothy~M. Chan, Chaya Keller, and Shakhar Smorodinsky.
\newblock On {Z}arankiewicz's problem for intersection hypergraphs of geometric objects.
\newblock In Oswin Aichholzer and Haitao Wang, editors, {\em 41st International Symposium on Computational Geometry, SoCG 2025, June 23-27, 2025, Kanazawa, Japan}, volume 332 of {\em LIPIcs}, pages 33:1--33:14. Schloss Dagstuhl - Leibniz-Zentrum f{\"{u}}r Informatik, 2025.
\newblock URL: \url{https://doi.org/10.4230/LIPIcs.SoCG.2025.33}, \href {https://doi.org/10.4230/LIPICS.SOCG.2025.33} {\path{doi:10.4230/LIPICS.SOCG.2025.33}}.

\bibitem{chan2024zarankiewicz}
Timothy~M. Chan, Chaya Keller, and Shakhar Smorodinsky.
\newblock On {Zarankiewicz}'s problem for intersection hypergraphs of geometric objects.
\newblock In Oswin Aichholzer and Haitao Wang, editors, {\em 41st International Symposium on Computational Geometry, SoCG 2025, June 23-27, 2025, Kanazawa, Japan}, volume 332 of {\em LIPIcs}, pages 33:1--33:14. Schloss Dagstuhl - Leibniz-Zentrum f{\"{u}}r Informatik, 2025.
\newblock URL: \url{https://doi.org/10.4230/LIPIcs.SoCG.2025.33}, \href {https://doi.org/10.4230/LIPICS.SOCG.2025.33} {\path{doi:10.4230/LIPICS.SOCG.2025.33}}.

\bibitem{chandran2011chordal}
L.~Sunil Chandran, Mathew~C. Francis, and Rogers Mathew.
\newblock Chordal bipartite graphs with high boxicity.
\newblock {\em Graphs Comb.}, 27(3):353--362, 2011.
\newblock URL: \url{https://doi.org/10.1007/s00373-011-1017-2}, \href {https://doi.org/10.1007/S00373-011-1017-2} {\path{doi:10.1007/S00373-011-1017-2}}.

\bibitem{chaplick2014intersection}
Steven Chaplick, Pavol Hell, Yota Otachi, Toshiki Saitoh, and Ryuhei Uehara.
\newblock Intersection dimension of bipartite graphs.
\newblock In T.~V. Gopal, Manindra Agrawal, Angsheng Li, and S.~Barry Cooper, editors, {\em Theory and Applications of Models of Computation - 11th Annual Conference, {TAMC} 2014, Chennai, India, April 11-13, 2014. Proceedings}, volume 8402 of {\em Lecture Notes in Computer Science}, pages 323--340. Springer, 2014.
\newblock \href {https://doi.org/10.1007/978-3-319-06089-7\_23} {\path{doi:10.1007/978-3-319-06089-7\_23}}.

\bibitem{chaplick2017ferrers}
Steven Chaplick, Pavol Hell, Yota Otachi, Toshiki Saitoh, and Ryuhei Uehara.
\newblock {Ferrers} dimension of grid intersection graphs.
\newblock {\em Discret. Appl. Math.}, 216:130--135, 2017.
\newblock URL: \url{https://doi.org/10.1016/j.dam.2015.05.035}, \href {https://doi.org/10.1016/J.DAM.2015.05.035} {\path{doi:10.1016/J.DAM.2015.05.035}}.

\bibitem{chazelle1990lower}
Bernard Chazelle.
\newblock Lower bounds for orthogonal range searching: I. the reporting case.
\newblock {\em J. {ACM}}, 37(2):200--212, 1990.
\newblock \href {https://doi.org/10.1145/77600.77614} {\path{doi:10.1145/77600.77614}}.

\bibitem{DBLP:journals/siamdm/Do19}
Thao Do.
\newblock Representation complexities of semialgebraic graphs.
\newblock {\em {SIAM} J. Discret. Math.}, 33(4):1864--1877, 2019.
\newblock \href {https://doi.org/10.1137/18M1221606} {\path{doi:10.1137/18M1221606}}.

\bibitem{DBLP:journals/dm/Farber83}
Martin Farber.
\newblock Characterizations of strongly chordal graphs.
\newblock {\em Discret. Math.}, 43(2-3):173--189, 1983.
\newblock \href {https://doi.org/10.1016/0012-365X(83)90154-1} {\path{doi:10.1016/0012-365X(83)90154-1}}.

\bibitem{fox2008separator}
Jacob Fox and J{\'a}nos Pach.
\newblock Separator theorems and {Tur{\'a}n}-type results for planar intersection graphs.
\newblock {\em Advances in Mathematics}, 219(3):1070--1080, 2008.
\newblock URL: \url{https://www.sciencedirect.com/science/article/pii/S0001870808001527}, \href {https://doi.org/10.1016/j.aim.2008.06.002} {\path{doi:10.1016/j.aim.2008.06.002}}.

\bibitem{fox2010separator}
Jacob Fox and J{\'{a}}nos Pach.
\newblock A separator theorem for string graphs and its applications.
\newblock {\em Comb. Probab. Comput.}, 19(3):371--390, 2010.
\newblock \href {https://doi.org/10.1017/S0963548309990459} {\path{doi:10.1017/S0963548309990459}}.

\bibitem{fox2017semi}
Jacob Fox, J{\'a}nos Pach, Adam Sheffer, Andrew Suk, and Joshua Zahl.
\newblock A semi-algebraic version of {Zarankiewicz}'s problem.
\newblock {\em Journal of the European Mathematical Society (EMS Publishing)}, 19(6):1785--1810, 2017.
\newblock \href {https://doi.org/10.4171/JEMS/705} {\path{doi:10.4171/JEMS/705}}.

\bibitem{golovach2016enumerating}
Petr~A. Golovach, Pinar Heggernes, Mamadou~Moustapha Kant{\'{e}}, Dieter Kratsch, and Yngve Villanger.
\newblock Enumerating minimal dominating sets in chordal bipartite graphs.
\newblock {\em Discret. Appl. Math.}, 199:30--36, 2016.
\newblock URL: \url{https://doi.org/10.1016/j.dam.2014.12.010}, \href {https://doi.org/10.1016/J.DAM.2014.12.010} {\path{doi:10.1016/J.DAM.2014.12.010}}.

\bibitem{hartman1991grid}
Irith~Ben{-}Arroyo Hartman, Ilan Newman, and Ran Ziv.
\newblock On grid intersection graphs.
\newblock {\em Discret. Math.}, 87(1):41--52, 1991.
\newblock \href {https://doi.org/10.1016/0012-365X(91)90069-E} {\path{doi:10.1016/0012-365X(91)90069-E}}.

\bibitem{janzer2024zarankiewicz}
Oliver Janzer and Cosmin Pohoata.
\newblock On the {Zarankiewicz} problem for graphs with bounded {VC}-dimension.
\newblock {\em Comb.}, 44(4):839--848, 2024.
\newblock URL: \url{https://doi.org/10.1007/s00493-024-00095-2}, \href {https://doi.org/10.1007/S00493-024-00095-2} {\path{doi:10.1007/S00493-024-00095-2}}.

\bibitem{keller2023zarankiewicz}
Chaya Keller and Shakhar Smorodinsky.
\newblock {Zarankiewicz}'s problem via {\(\epsilon\)}-t-nets.
\newblock In Wolfgang Mulzer and Jeff~M. Phillips, editors, {\em 40th International Symposium on Computational Geometry, SoCG 2024, June 11-14, 2024, Athens, Greece}, volume 293 of {\em LIPIcs}, pages 66:1--66:15. Schloss Dagstuhl - Leibniz-Zentrum f{\"{u}}r Informatik, 2024.
\newblock URL: \url{https://doi.org/10.4230/LIPIcs.SoCG.2024.66}, \href {https://doi.org/10.4230/LIPICS.SOCG.2024.66} {\path{doi:10.4230/LIPICS.SOCG.2024.66}}.

\bibitem{klinz1995permuting}
Bettina Klinz, R{\"{u}}diger Rudolf, and Gerhard~J. Woeginger.
\newblock Permuting matrices to avoid forbidden submatrices.
\newblock {\em Discret. Appl. Math.}, 60(1-3):223--248, 1995.
\newblock \href {https://doi.org/10.1016/0166-218X(94)00054-H} {\path{doi:10.1016/0166-218X(94)00054-H}}.

\bibitem{KST}
Tam{\'a}s K{\H{o}}v{\'a}ri, Vera~T. S{\'o}s, and P{\'a}l Tur{\'a}n.
\newblock On a problem of {K}. {Zarankiewicz}.
\newblock {\em Colloquium Mathematicum}, 3:50--57, 1954.
\newblock URL: \url{http://eudml.org/doc/210011}.

\bibitem{kratochvil1994special}
Jan Kratochv{\'{\i}}l.
\newblock A special planar satisfiability problem and a consequence of its {NP}-completeness.
\newblock {\em Discret. Appl. Math.}, 52(3):233--252, 1994.
\newblock \href {https://doi.org/10.1016/0166-218X(94)90143-0} {\path{doi:10.1016/0166-218X(94)90143-0}}.

\bibitem{kratochvil1989np}
Jan Kratochv{\'\i}l and Ji{\v{r}}{\'\i} Matou{\v{s}}ek.
\newblock {NP}-hardness results for intersection graphs.
\newblock {\em Commentationes Mathematicae Universitatis Carolinae}, 30(4):761--773, 1989.
\newblock URL: \url{http://eudml.org/doc/17790}.

\bibitem{lubiw1985doubly}
Anna Lubiw.
\newblock Doubly lexical orderings of matrices.
\newblock In Robert Sedgewick, editor, {\em Proceedings of the 17th Annual {ACM} Symposium on Theory of Computing, May 6-8, 1985, Providence, Rhode Island, {USA}}, pages 396--404. {ACM}, 1985.
\newblock \href {https://doi.org/10.1145/22145.22189} {\path{doi:10.1145/22145.22189}}.

\bibitem{muller1996hamiltonian}
Haiko M{\"{u}}ller.
\newblock Hamiltonian circuits in chordal bipartite graphs.
\newblock {\em Discret. Math.}, 156(1-3):291--298, 1996.
\newblock \href {https://doi.org/10.1016/0012-365X(95)00057-4} {\path{doi:10.1016/0012-365X(95)00057-4}}.

\bibitem{mustafa2015zarankiewicz}
Nabil~H. Mustafa and J{\'{a}}nos Pach.
\newblock On the {Zarankiewicz} problem for intersection hypergraphs.
\newblock In Emilio~Di Giacomo and Anna Lubiw, editors, {\em Graph Drawing and Network Visualization - 23rd International Symposium, {GD} 2015, Los Angeles, CA, USA, September 24-26, 2015, Revised Selected Papers}, volume 9411 of {\em Lecture Notes in Computer Science}, pages 207--216. Springer, 2015.
\newblock \href {https://doi.org/10.1007/978-3-319-27261-0\_18} {\path{doi:10.1007/978-3-319-27261-0\_18}}.

\bibitem{mustata2014subclasses}
Irina Mustata.
\newblock {\em On subclasses of grid intersection graphs}.
\newblock PhD thesis, Technische Universität Berlin, 2014.
\newblock URL: \url{http://dx.doi.org/10.14279/depositonce-4202}.

\bibitem{saha2014permutation}
Pranab~K. Saha, Asim Basu, Malay~K. Sen, and Douglas~B. West.
\newblock Permutation bigraphs and interval containments.
\newblock {\em Discret. Appl. Math.}, 175:71--78, 2014.
\newblock URL: \url{https://doi.org/10.1016/j.dam.2014.05.020}, \href {https://doi.org/10.1016/J.DAM.2014.05.020} {\path{doi:10.1016/J.DAM.2014.05.020}}.

\bibitem{smorodinsky2024survey}
Shakhar Smorodinsky.
\newblock A survey of {Zarankiewicz} problem in geometry.
\newblock {\em CoRR}, abs/2410.03702, 2024.
\newblock URL: \url{https://doi.org/10.48550/arXiv.2410.03702}, \href {https://arxiv.org/abs/2410.03702} {\path{arXiv:2410.03702}}, \href {https://doi.org/10.48550/ARXIV.2410.03702} {\path{doi:10.48550/ARXIV.2410.03702}}.

\bibitem{Tomon_Zakharov_2021}
István Tomon and Dmitriy Zakharov.
\newblock Turán-type results for intersection graphs of boxes.
\newblock {\em Combinatorics, Probability and Computing}, 30(6):982–987, 2021.
\newblock \href {https://doi.org/10.1017/S0963548321000171} {\path{doi:10.1017/S0963548321000171}}.

\bibitem{uehara2005graph}
Ryuhei Uehara, Seinosuke Toda, and Takayuki Nagoya.
\newblock Graph isomorphism completeness for chordal bipartite graphs and strongly chordal graphs.
\newblock {\em Discret. Appl. Math.}, 145(3):479--482, 2005.
\newblock URL: \url{https://doi.org/10.1016/j.dam.2004.06.008}, \href {https://doi.org/10.1016/J.DAM.2004.06.008} {\path{doi:10.1016/J.DAM.2004.06.008}}.

\end{thebibliography}

\appendix

\section{Higher Ferrers Dimension}
\label{sec:high dim}
We prove~\cref{cor: high dim} in this section. Note that the proof closely follows Section 2.2 of Chan and Har-Peled~\cite{chan2023number}. 
We repeat their definitions here for completeness. For integers $a \leq b$, we denote $\{a,a+1,\ldots, b\}$ by $[[a: b]]$. 
A \textbf{dyadic range} is an integer range of the form $[[s 2^i: (s+1)2^{i+1}-1]]$ for some integers $s$ and $i$.
Considering a $\chain$ graph, we can view the points as integers $\pset = \{1,\ldots, n\}$. 
Denote by $\dset(n)$ the set of all dyadic ranges. 
Each integer (point) appears in at most  $\lceil \log n\rceil$ dyadic ranges.

Notice that a ray is simply a range $[[a, n]]$ for some $a$.  

\begin{lemma}[An analogue of Lemma 2.2 in~\cite{chan2023number}]
Let $r$ be a ray. Denote by $\sf{dy}(r)$ the set of minimal disjoint dyadic ranges covering $r$. Then $\sf{dy}(r)$ is unique and contains at most $\lceil \log n \rceil$ ranges.     
\end{lemma}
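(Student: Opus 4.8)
The statement is a standard "dyadic decomposition of a ray" fact, so I would mirror the classical argument for covering an interval by dyadic blocks, specialised to the case of a ray $r = [[a:n]]$. First I would make precise what "minimal disjoint dyadic ranges covering $r$" means: a set $\sf{dy}(r)$ of pairwise disjoint dyadic ranges whose union is exactly $[[a:n]]$, such that no two of them can be merged into a single dyadic range (equivalently, such that the collection is an antichain in the containment order on $\dset(n)$, with minimality understood as "no dyadic range properly containing one of the blocks is itself contained in $r$"). The existence half is a greedy construction: starting from the left endpoint $a$, repeatedly take the \emph{largest} dyadic range whose left endpoint is the current position and which is still contained in $[[a:n]]$, then advance the current position past it. I would argue this terminates and covers $r$ exactly, since the dyadic ranges of size $1$ are always available.

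For uniqueness, the key observation is the laminar (nesting) structure of $\dset(n)$: any two dyadic ranges are either disjoint or one contains the other, and every point lies in a unique dyadic range of each size $2^i$. Given this, I would show any minimal disjoint covering of $[[a:n]]$ must coincide with the greedy one by a leftmost-difference argument: consider the leftmost point $p$ where two candidate coverings could first assign differently-sized blocks; the laminar structure forces the block starting at $p$ in either covering to be one of the nested chain of dyadic ranges with left endpoint $p$, and minimality (not being mergeable with the block to its right, which by induction is forced) pins down exactly which one. Alternatively — and this may be cleaner to write — I would characterise $\sf{dy}(r)$ intrinsically: a dyadic range $D \subseteq [[a:n]]$ belongs to $\sf{dy}(r)$ iff the \emph{parent} dyadic range of $D$ (the unique dyadic range of twice the size containing $D$) is not contained in $[[a:n]]$; then disjointness, covering, and uniqueness all follow directly from the laminar structure, with no greedy process needed.

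For the bound $|\sf{dy}(r)| \le \lceil \log n\rceil$, I would use the "at most two blocks per size" phenomenon, here sharpened by the one-sided nature of a ray: since $r$ extends all the way to $n$, on each scale $2^i$ at most \emph{one} block of that size can appear in $\sf{dy}(r)$ — if two dyadic ranges of equal size both had their parent sticking out of $[[a:n]]$, one of those parents would still be contained in $[[a:n]]$ because the obstruction can only come from the left endpoint $a$ (the right side is unbounded up to $n$, and we may pad $n$ up to a power of two, or handle the boundary with the $\lceil\cdot\rceil$). Hence $|\sf{dy}(r)|$ is at most the number of distinct scales, which is at most $\lceil \log n\rceil$. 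I expect the main obstacle to be bookkeeping at the right boundary $n$: if $n$ is not a power of two, the dyadic grid does not tile $[[1:n]]$ cleanly, so I would either assume w.l.o.g.\ that $n$ is a power of two (padding costs only a factor absorbed by $\lceil\log n\rceil$) or carry the ceiling through the scale-counting argument; getting the constant exactly $\lceil \log n\rceil$ rather than $\lceil \log n\rceil + O(1)$ is the only delicate point.
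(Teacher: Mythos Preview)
The paper does not actually prove this lemma: it is stated as an analogue of Lemma~2.2 in Chan and Har-Peled, with only the one-line remark that the bound improves from $2\lceil \log n\rceil$ to $\lceil \log n\rceil$ ``since we have rays instead of intervals.'' Your sketch supplies exactly the argument the paper leaves implicit, and your key observation---that for a ray $[[a:n]]$ only the left endpoint $a$ can prevent a dyadic block from merging with its sibling, so at most one block per scale appears---is precisely the content of that remark. The existence/uniqueness portions via the laminar structure of dyadic ranges are standard and correct; your noted caveat about padding $n$ to a power of two to keep the right boundary clean is the appropriate way to secure the exact constant $\lceil \log n\rceil$.
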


Remark that we have $\lceil \log n\rceil$ instead of $2 \lceil \log n\rceil$ as in~\cite{chan2023number} since we have rays instead of intervals. The following lemma implies~\cref{cor: high dim}. 

\begin{lemma}
\label{lem: dyadic decomp}
Let $G = (A \cup B, E) \in \chain^d$ for $d \geq 3$ that does not contain $K_{k,k}$. Then the number of edges in $G$ is at most $O(|A|+|B|) \cdot k \cdot \lceil \log n\rceil^{d-3}$ where $n = \max \{|A|,|B|\}$.     
\end{lemma}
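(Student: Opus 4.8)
The plan is to induct on $d$, with \cref{thm:chain3} (equivalently \cref{cor:chain3_main}) as the base case $d = 3$: there $\lceil \log n\rceil^{d-3} = 1$ and the bound $(3|A|+6|B|)(k-1) = O(|A|+|B|)\cdot k$ is exactly what \cref{thm:chain3} gives. So fix $d \ge 4$ and write $G = H \cap G_d$ with $H \in \chain^{d-1}$ and $G_d \in \chain$ on the common vertex set $A \cup B$. Since a $\chain$ graph is an intersection graph of rightward rays and points on a line, and this representation is symmetric in its two parts, I may fix a representation of $G_d$ in which each $u \in A$ is a ray $r_u = [[a_u, n]]$ and each $v \in B$ is a point $p_v \in \{1, \dots, n\}$, with $uv \in E(G_d)$ if and only if $p_v \in r_u$.

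Next comes the dyadic decomposition, following Section 2.2 of Chan and Har-Peled~\cite{chan2023number}. For each dyadic range $\rho \in \dset(n)$ put $A_\rho = \{u \in A : \rho \in \sf{dy}(r_u)\}$ and $B_\rho = \{v \in B : p_v \in \rho\}$. The structural heart of the argument is the claim that the edge sets of the induced subgraphs $\{G[A_\rho \cup B_\rho]\}_{\rho \in \dset(n)}$ partition $E(G)$. Indeed, since $E(G) \subseteq E(G_d)$, every edge $uv \in E(G)$ has $p_v \in r_u$, so $p_v$ lies in exactly one member $\rho$ of the disjoint cover $\sf{dy}(r_u)$, and that $\rho$ is the unique dyadic range with $u \in A_\rho$ and $v \in B_\rho$. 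Conversely, any $u \in A_\rho$, $v \in B_\rho$ has $p_v \in \rho \subseteq r_u$, so $uv \in E(G_d)$; hence $G_d[A_\rho \cup B_\rho]$ is a \emph{complete} bipartite graph. Consequently $G[A_\rho \cup B_\rho] = H[A_\rho \cup B_\rho] \cap G_d[A_\rho \cup B_\rho] = H[A_\rho \cup B_\rho]$ lies in $\chain^{d-1}$ (which is closed under induced subgraphs), and being an induced subgraph of $G$ it is $K_{k,k}$-free.

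I would then apply the induction hypothesis to each piece: $|E(G[A_\rho \cup B_\rho])|$ is $O(|A_\rho|+|B_\rho|)\cdot k\cdot \lceil \log n_\rho\rceil^{d-4}$, hence at most $O(|A_\rho|+|B_\rho|)\cdot k\cdot \lceil\log n\rceil^{d-4}$ since $n_\rho := \max\{|A_\rho|,|B_\rho|\} \le n$. Summing over $\rho$ and invoking the analogue of Lemma 2.2 in~\cite{chan2023number} stated above (each $\sf{dy}(r_u)$ has at most $\lceil\log n\rceil$ ranges, so $\sum_\rho |A_\rho| = \sum_{u \in A}|\sf{dy}(r_u)| \le |A|\lceil\log n\rceil$) together with the fact that each point lies in at most $\lceil\log n\rceil$ dyadic ranges (so $\sum_\rho |B_\rho| \le |B|\lceil\log n\rceil$), I obtain that $|E(G)|$ is at most $O(k)\,\lceil\log n\rceil^{d-4}\cdot(|A|+|B|)\,\lceil\log n\rceil = O(|A|+|B|)\cdot k\cdot\lceil\log n\rceil^{d-3}$. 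Note that the hidden constant need not grow with $d$, as each level of the recursion is paid for by exactly one extra $\lceil\log n\rceil$ factor.

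The routine part here is the final summation. The one step that needs care is the structural claim of the second paragraph: that passing to $A_\rho \cup B_\rho$ simultaneously charges every edge of $G$ to a unique $\rho$ and trivializes the $d$-th coordinate, pushing the graph into $\chain^{d-1}$. The first property is exactly the uniqueness of the minimal disjoint dyadic cover of a ray; the second holds because over $A_\rho \times B_\rho$ the $d$-th chain graph is complete bipartite. Together these are what make the dyadic decomposition cost only a single $\log$ per dimension, and the same two properties are the reason the argument cannot be iterated ``for free'' on the side $G_d$ of the intersection. A minor point to verify is that the choice of which part of $G_d$ carries the rays is immaterial, which follows from the symmetry of the rays-and-points representation (equivalently, transpose-invariance of staircase biadjacency matrices).
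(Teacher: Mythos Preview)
Your proof is correct and follows essentially the same approach as the paper: induction on $d$ with \cref{thm:chain3} as base case, peeling off one $\chain$ factor via a dyadic decomposition of its ray--point representation, observing that on each dyadic piece the peeled factor becomes a biclique so the piece lies in $\chain^{d-1}$, and summing using the $\lceil\log n\rceil$ bound on the number of dyadic ranges containing a point or appearing in $\sf{dy}(r)$. Your write-up is in fact slightly more explicit than the paper's (you argue the partition property rather than just the inequality $|E(G)|\le\sum_\rho |E(G_\rho)|$, and you note the symmetry of the ray--point representation); no substantive difference.
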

\begin{proof}
The proof is obtained by induction on $d$. The base case when $d = 3$ is proved in~\cref{thm:chain3}. 
When $d > 3$, assume the claim is true for all dimensions up to $d-1$. We write $G = G' \cap \widehat{G}$ where $G' \in \chain$ and $\widehat{G} \in \chain^{d-1}$. 
Since $G' \in \chain$, it is an intersection bigraph of rays $\rset$ and points $\pset$ where $a \in A$ is mapped to $r_a \in \rset$ and $b \in B$ to $p_b \in \pset$. Assume w.l.o.g. that $\pset = \{1,\ldots, q\}$ for $q \leq n$.

For each dyadic range $\nu \in \dset(q)$, we have an auxiliary graph $G_{\nu}$ which  is an induced subgraph $G[A_{\nu} \cup B_{\nu}]$ where  $A_{\nu} = \{a \in A: \nu \in \sf{ dy}(r_a)\}$ and $B_{\nu} = \{b: p_b \in \nu \}$. 
Notice that $|E(G)| \leq \sum_{\nu \in \dset(n)} |E(G_{\nu})|$. 

Moreover, $G_{\nu} = G'[A_{\nu} \cup B_{\nu}] \cap \widehat{G}[A_{\nu} \cup B_{\nu}]$ where $G'[A_{\nu} \cup B_{\nu}]$ is a biclique; therefore $G_{\nu} \in \chain^{d-1}$ and is free of $k$-by-$k$ biclique.
This allows us to invoke the induction hypothesis, which implies that 
\[ |E(G)| \leq \sum_{\nu \in \dset(n)} O(|A_{\nu}| + |B_{\nu}|)k \lceil \log n\rceil^{d-4}\]
Finally, since each vertex $a \in A$ appears in at most $\lceil \log q \rceil \leq \lceil \log n\rceil$ sets $A_{\nu}$ (and similarly for each vertex $b \in B$), this implies that the above sum is at most $O(|A|+ |B|)k \lceil \log n\rceil^{d-3}$. 
\end{proof}

\section{Intersection of Two Convex Graphs} \label{sec:conv2}

In this section, we will use a more formal definition of $\prig$ graphs.
We say that $(\Hor, \Ver, \phi)$ is a point-rectangle intersection representation of a graph $G = (U \cup V, E)$, if $\Hor$, $\Ver$ are sets of points and rectangles, respectively, in $\mathbb{R}^2$, $\phi: U \mapsto \Hor, V \mapsto \Ver$, and for all $u\in U$, $v\in V$, we have $\{u, v\} \in E$ if and only if $\phi(u)$ and $\phi(v)$ intersect. A graph $G = (U \cup V, E)$ is a point-rectangle intersection bigraph, if it has a point-rectangle intersection representation. We call the class of such graphs $\prig$.

\begin{figure}[h]
    \centering
    \includegraphics[scale=0.12]{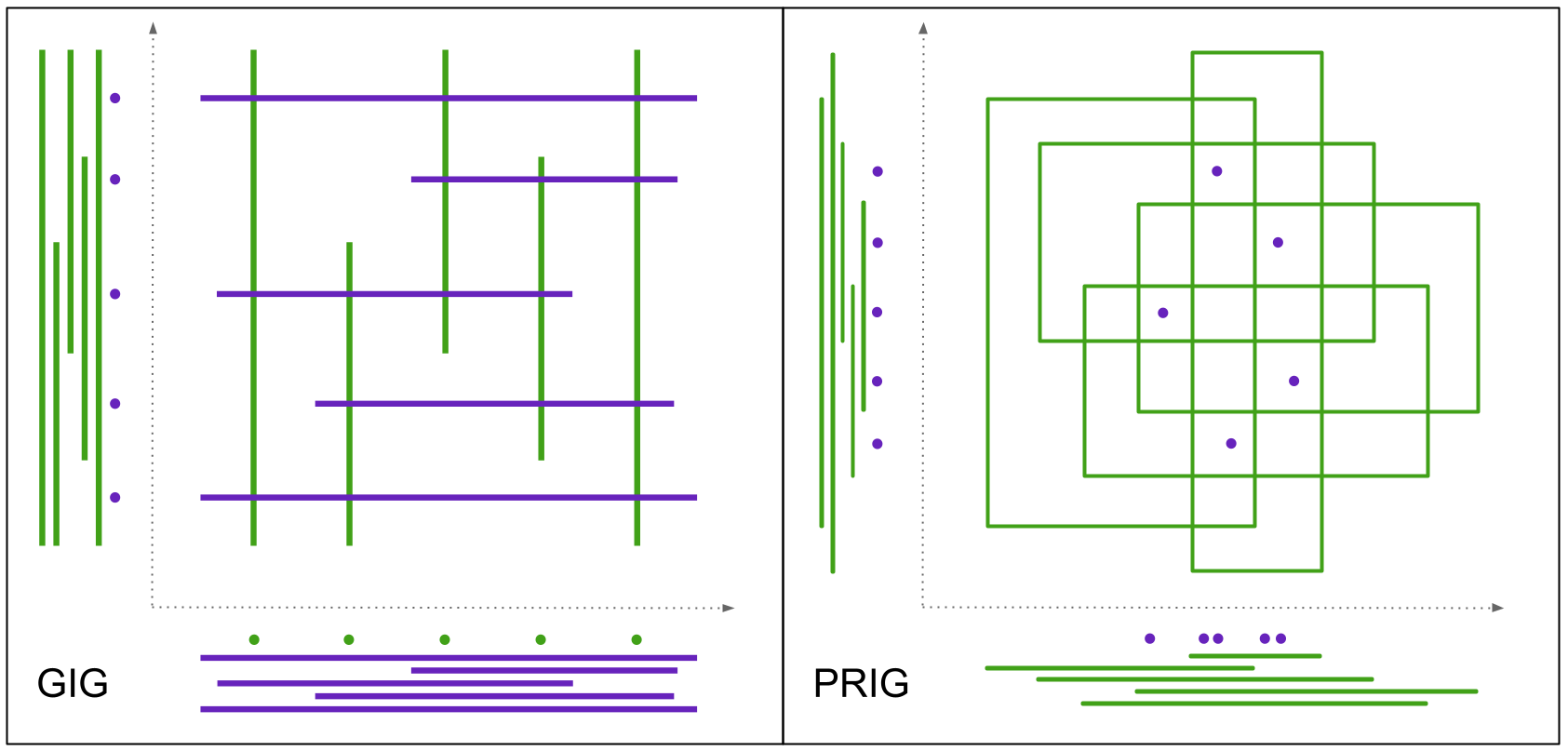}
    \caption{Examples of $\gig$ and $\prig$ graphs with their projections into axes}
    \label{fig:gig_prc}
\end{figure}

\begin{lemma} \label{lem:conv2_is_gig_prig}
    Any graph $G = (U \cup V, E) \in \conv^2$ is a disjoint union of $\gig$ and $\prig$ graphs.
\end{lemma}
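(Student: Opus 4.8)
The plan is to start from a graph $G = (U \cup V, E) \in \conv^2$ and unpack what it means to be the intersection of two convex graphs, then show that each connected component of $G$ is either a $\gig$ graph or a $\prig$ graph. By definition, $G = G_1 \cap G_2$ where $G_1, G_2$ are both convex bipartite graphs on the common vertex set $U \cup V$. A convex graph over $V$ gives an ordering of $V$ so that each $u \in U$ has a contiguous neighborhood; since $\conv$ is the point–interval containment class, we may instead think of $G_1$ as assigning to each $v \in V$ a point $p^1_v \in \R$ and to each $u \in U$ an interval $I^1_u \subseteq \R$, with $u \sim_{G_1} v$ iff $p^1_v \in I^1_u$; similarly $G_2$ gives points $p^2_v$ and intervals $I^2_u$ on a second real line. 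The idea is to use the first line as the $x$-axis and the second as the $y$-axis: place $v \in V$ at the point $(p^1_v, p^2_v)$ and $u \in U$ at the axis-aligned rectangle $I^1_u \times I^2_u$. Then $u$ and $v$ are adjacent in $G$ exactly when $p^1_v \in I^1_u$ and $p^2_v \in I^2_u$, i.e. when the point lies in the rectangle — so $G$ is already a $\prig$ graph (with the roles of $U$ and $V$ matching the $\prig$ families).

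Next I would observe that this realizes $G$ as a point–rectangle bigraph, but to get the finer ``$\gig$ or $\prig$'' dichotomy componentwise we need to understand when a rectangle can be ``flattened'' to a segment. The key step is: within a single connected component $C$ of $G$, look at whether any $u \in U$ in $C$ has a genuinely two-dimensional (full) rectangle. If every $u$-vertex in $C$ is adjacent to at most one $v$-vertex, or more generally if the component is degenerate, it is trivially both; the interesting case is to argue that connectivity forces a consistent ``type'' across the component. Concretely, I would argue that if some rectangle $I^1_u \times I^2_u$ in component $C$ has both sides nondegenerate, then — tracing adjacencies — one can push a global coordinate change through the whole component making all of $C$ into a $\prig$ realization, whereas if degeneracies propagate the other way, the component collapses to horizontal/vertical segments and is a $\gig$. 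The cleanest route is probably to handle each connected component separately and show each is simultaneously a $\prig$ graph via the construction above, then note that a component all of whose point-vertices can be separated coordinatewise admits the $\gig$ realization instead; the disjoint union over components then lands in ``disjoint union of $\gig$ and $\prig$'' as claimed.

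The main obstacle I anticipate is the bookkeeping around \emph{degenerate} intervals and \emph{ties} among the points $p^i_v$: the convex representation only guarantees an ordering, not distinct real coordinates, so I must be careful to perturb points and intervals consistently on both lines without creating or destroying adjacencies, and to make sure the rectangle/segment assignment is well-defined when an interval $I^i_u$ degenerates to a single point. A secondary subtlety is matching the orientation conventions of Table~\ref{table:intersection_classes} ($\gig$ has $U \mapsto$ horizontal segments, $V \mapsto$ vertical segments; $\prig$ has $U \mapsto$ rectangles, $V \mapsto$ points), which forces the $U$/$V$ sides of $\conv^2$ to be assigned in a fixed way — so I should check at the outset that the convexity is on the $V$-side in both factors (or reduce to that case), since $\conv$ as defined is convex ``over $V$''. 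Once these conventions are pinned down, the construction itself is direct and the proof is essentially the verification that point-in-rectangle incidence equals the conjunction of two point-in-interval incidences.
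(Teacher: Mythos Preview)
Your proposal has a real gap at the very first step. You assume both factors $G_1,G_2$ assign intervals to $U$ and points to $V$; under that assumption your product construction does give a $\prig$ representation, and the lemma would then follow immediately (a single $\prig$ graph is trivially a ``disjoint union of $\gig$ and $\prig$ graphs''), rendering your whole second paragraph unnecessary. The problem is that this assumption is not a convention you can normalize away. Membership in $\conv$ only says the graph is convex over \emph{one} side of its bipartition; when $G = G_1 \cap G_2$, the ``point side'' $V_1$ of $G_1$ and the ``point side'' $V_2$ of $G_2$ are chosen independently and need not agree with each other or with $V$. That mismatch is exactly where $\gig$ enters: if $G_1$ puts intervals on $U$ and points on $V$ while $G_2$ does the opposite, the same product construction yields horizontal segments for $U$ (interval $\times$ point) and vertical segments for $V$ (point $\times$ interval), i.e.\ a $\gig$ representation.

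The paper's proof handles this globally: it intersects the two bipartitions into four blocks $A_1=U_1\cap U_2$, $A_2=U_1\cap V_2$, $B_1=V_1\cap V_2$, $B_2=V_1\cap U_2$, observes that bipartiteness of $G_1$ and of $G_2$ forbids any edge between $A_1\cup B_1$ and $A_2\cup B_2$, and then applies precisely your coordinate-product construction to each piece to get $G[A_1\cup B_1]\in\prig$ and $G[A_2\cup B_2]\in\gig$. Your attempt to extract the dichotomy from whether rectangles are ``genuinely two-dimensional'' is on the wrong track: the split is purely combinatorial, driven by how the two bipartitions overlap, not by geometric degeneracy in a single fixed representation.
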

\begin{proof}
Since $G \in \conv^2$, then there exist graphs $G_1$ and $G_2$, such that $G_1 \cap G_2 = G$. Let $V_1$ and $V_2$ be the partitions that the graphs $G_1$ and $G_2$, respectively, are convex over, and $U_1 \subseteq V(G_1)$, $U_2 \subseteq V(G_2)$, the other partitions.

Since $\conv$ graphs are closed under taking induced subgraphs, we can assume that $U_1 \cup V_1 = U_2 \cup V_2$. Let $A_1 = U_1 \cap U_2$, $A_2 = U_1 \cap V_2$, $B_1 = V_1 \cap V_2$, $B_2 = V_1 \cap U_2$, as shown on \cref{fig:partition_intersections}.

\begin{figure}[h]
    \centering
    \includegraphics[scale=0.1]{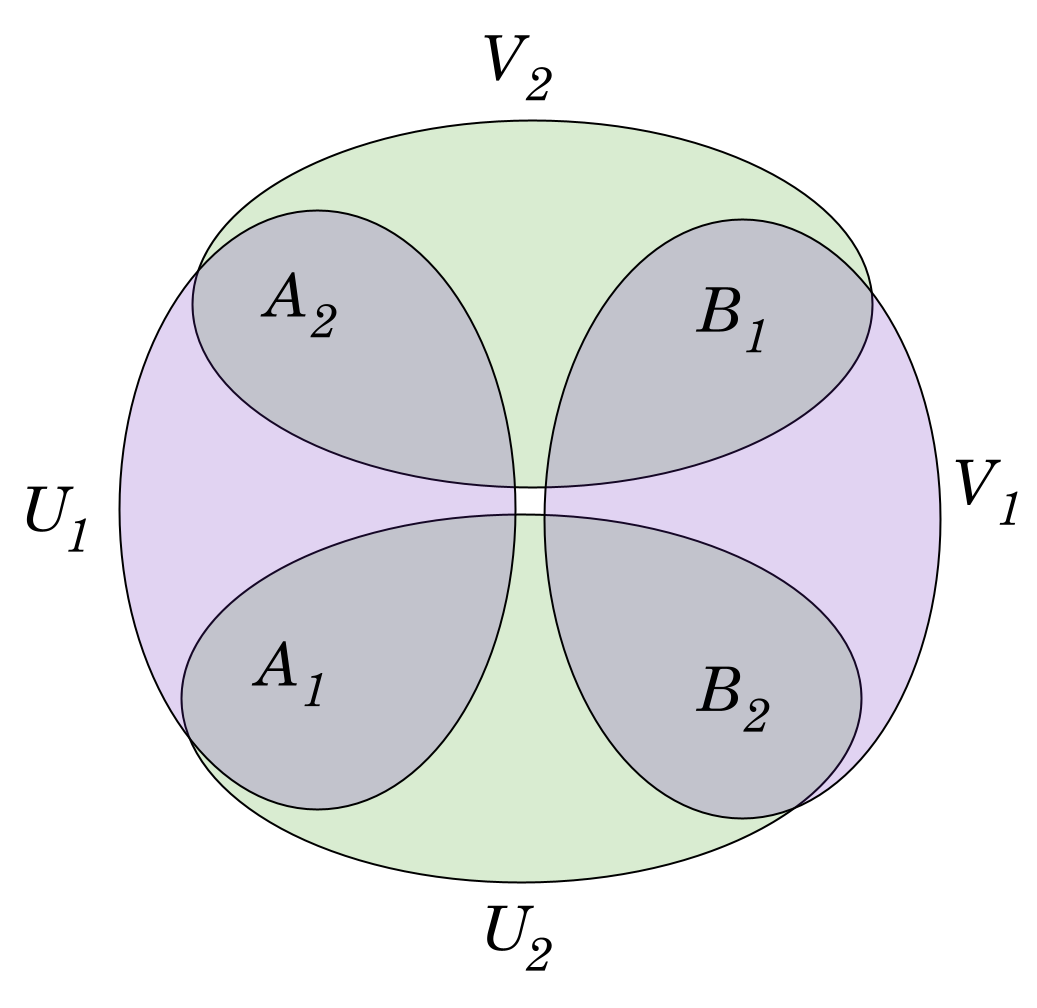}
    \caption{}
    \label{fig:partition_intersections}
\end{figure}

We have $U \cup V = A_1 \cup A_2 \cup B_1 \cup B_2$ and all edges must start and end at these nodes. Since $A_1$ is in the same partition as $A_2$, and $B_2$, in at least one of the graphs, there can be no edges between $A_1$ and $A_2 \cup B_2$. The same is true for $B_1$. Clearly, $G$ can have one component consisting of nodes $A_1 \cup B_1$ and another component consisting of nodes $A_2 \cup B_2$, with no edges between these components.

It remains to see that $G[A_1 \cup B_1] \in \prig$ and $G[A_2 \cup B_2] \in \gig$. To do this, we show that there exist representations of these graphs in $\prig$ and $\gig$ respectively, such that the projections of these representations to $x$ and $y$ axis are the convex representations of graphs $G_1$ and $G_2$ induced on $A_1 \cup B_1$ and $A_2 \cup B_2$ respectively.

Let $H_1 = G_1[A_1 \cup B_1]$ and $H_2 = G_2[A_1 \cup B_1]$. Since $G_1$ and $G_2$ are convex over $V_1$ and $V_2$, and $B_1 =  V_1 \cap V_2$, then $H_1$ and $H_2$ are convex over $B_1$.
Let $(\Hor_1, \Ver_1, \phi_1)$ be the point-segment representation of $H_1$, such that $\Ver_1$ are the points. Then $\phi_1: A_1 \mapsto \Hor_1, B_1 \mapsto \Ver_1$. Let $(\Hor_2, \Ver_2, \phi_2)$ be the point-segment representation of $H_2$, such that $\Ver_2$ are the points. Then $\phi_2: A_1 \mapsto \Hor_2, B_1 \mapsto \Ver_2$. We construct a mapping $\phi: A_1 \mapsto \Hor, B_1 \mapsto \Ver$, where $\Hor$ is a set of rectangles, and $\Ver$ a set of points in $\R^2$.
For each $w \in A_1 \cup B_1$,  $\phi(w)$ is the object whose projection to the $x$-axis is $\phi_1(w)$ and whose projection to the $y$-axis is $\phi_2(w)$. Note that for all $a \in A_1$, $\phi(a)$ is a rectangle, and for all $b \in B_1$, $\phi(b)$ is a point. It is easy to verify that $\phi(a)$ intersects $\phi(b)$, if and only if $\phi_1(a)$ intersects $\phi_1(b)$ and $\phi_2(a)$ intersects $\phi_2(b)$, which implies that there is an edge $\{a, b\}$ in the graph represented by $\phi$, if and only if $\{a, b\} \in E(H_1)\cap E(H_2)$. This means that $(\Hor, \Ver, \phi)$ is a valid representation of $G[A_1 \cup B_1] \in \prig$.

Showing that $G[A_2 \cup B_2] \in \gig$ is similar, but not quite symmetric. Let $H_1 = G_1[A_2 \cup B_2]$ and $H_2 = G_2[A_2 \cup B_2]$. Since $G_1$ and $G_2$ are convex over $V_1$ and $V_2$, and $A_2 \subseteq V_2$, $B_2 \subseteq V_1$, then $H_1$ and $H_2$ are convex over $A_2$ and $B_2$ respectively.
Let us define $(\Hor_1, \Ver_1, \phi_1)$ to be the point-segment representation of $H_1$, such that $\Hor_1$ are the points. Then $\phi_1: A_2 \mapsto \Hor_1, B_2 \mapsto \Ver_1$. Let $(\Hor_2, \Ver_2, \phi_2)$ be the point-segment representation of $H_2$, such that $\Ver_2$ are the points. Then $\phi_2: A_2 \mapsto \Hor_2, B_2 \mapsto \Ver_2$. We construct a mapping $\phi: A_2 \mapsto \Hor, B_2 \mapsto \Ver$, where $\Hor$ is a set of horizontal segments, and $\Ver$ is a set of vertical segments in $\R^2$.
\begin{figure}[h]
    \centering
    \includegraphics[scale=0.12]{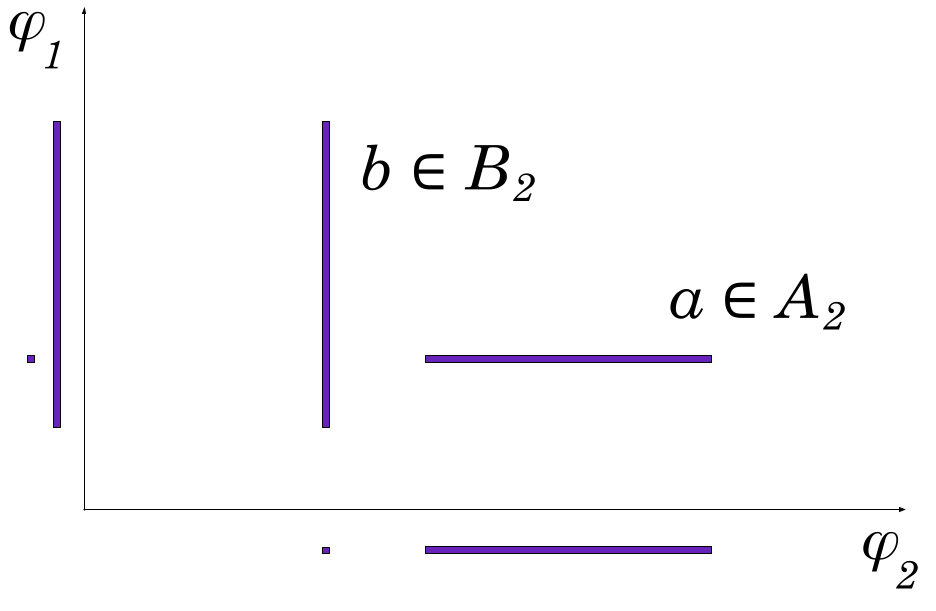}
    \caption{}
    \label{fig:conv2_GIG_proof}
\end{figure}
For each $w \in A_2 \cup B_2$,  $\phi(w)$ is the object whose projection to the $y$-axis is $\phi_1(w)$ and whose projection to the $x$-axis is $\phi_2(w)$. Note that for all $a \in A_2$, $\phi(a)$ is a horizontal segment and for all $b \in B_1$, $\phi(b)$ is a vertical segment. It is easy to verify that $\phi(a)$ intersects $\phi(b)$, if and only if $\phi_1(a)$ intersects $\phi_1(b)$ and $\phi_2(a)$ intersects $\phi_2(b)$, which implies that there is an edge $\{a, b\}$ in the graph represented by $\phi$, if and only if $\{a, b\} \in E(H_1)\cap E(H_2)$. This means that $(\Hor, \Ver, \phi)$ is a valid representation of $G[A_1 \cup B_1] \in \gig$.

\end{proof}

For graphs $G_1$, $G_2$, let $G_1 \dist G_2$ to denote the disjoint union of $G_1$ and $G_2$.

\begin{lemma} \label{lem:conv2_disjoint_union}
    $\conv^2$ is closed under disjoint union.
\end{lemma}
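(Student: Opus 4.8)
The plan is to show that if $G = G_1 \dist G_2$ with each $G_i \in \conv^2$, then $G \in \conv^2$ by constructing explicit convex representations. For each $i \in \{1,2\}$, write $G_i = H_i^{(1)} \cap H_i^{(2)}$ where $H_i^{(1)}, H_i^{(2)} \in \conv$ share the same vertex set $V(G_i)$, and $H_i^{(j)}$ is convex over one side of the bipartition. I would fix, for $j \in \{1,2\}$, orderings of the two partition classes of $H_i^{(j)}$ witnessing convexity, i.e.\ an ordering of the ``interval side'' and an ordering of the ``point side'' so that each vertex on the interval side has a contiguous neighbourhood.

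The key step is to concatenate the representations of $G_1$ and $G_2$ along a common number line, placing all objects of $G_1$ strictly to the left of all objects of $G_2$ (shift the second representation by a large constant). Concretely, for the first convex factor I would build a graph $H^{(1)} \in \conv$ on $V(G) = V(G_1) \dot\cup V(G_2)$: order the ``interval side'' of $H^{(1)}$ as the intervals coming from $G_1$'s first factor (in their witnessing order) followed by those from $G_2$'s first factor, and likewise order the ``point side'' as $G_1$'s points followed by $G_2$'s points, with the $G_2$ points translated so they all lie beyond every $G_1$ interval. Since each original interval's neighbourhood was contiguous within its own block and the two blocks are separated, the neighbourhood of each interval in $H^{(1)}$ is still contiguous, so $H^{(1)} \in \conv$; and there are no edges between the two blocks, matching that $G$ has none. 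Do the same for $H^{(2)}$, and then $H^{(1)} \cap H^{(2)} = G_1 \dist G_2 = G$, so $G \in \conv^2$.

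One subtlety to handle carefully: the bipartition of $G_i$ as $\conv^2$ graph need not align naturally when taking the disjoint union — the side that is ``intervals'' in $H_1^{(1)}$ must be matched with the side that is ``intervals'' in $H_2^{(1)}$ when we merge. Since a disjoint union of bipartite graphs admits a choice of which colour class each component contributes to, I would simply fix, for the union, that the interval side of $H^{(j)}$ receives the interval side of both $H_1^{(j)}$ and $H_2^{(j)}$; this is a free choice and causes no conflict because the components are vertex-disjoint. The verification that the merged neighbourhoods are contiguous and that no spurious edges are introduced is then routine given the translation separating the two blocks.

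The main obstacle is mostly bookkeeping rather than mathematical depth: one must be careful that ``convex over $V$'' is an asymmetric condition (only the $U$-side vertices are required to have contiguous neighbourhoods), so when merging two $\conv$ graphs one has to track which side plays which role in each of the two factors and ensure consistency across the disjoint union. Once the orderings are set up with $G_2$'s coordinates shifted past $G_1$'s, contiguity and edge-preservation follow immediately, so no real calculation is needed.

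Together with \cref{lem:conv2_is_gig_prig}, \cref{lem:conv2_disjoint_union}, and the (analogous, easier) fact that a $\gig$ or $\prig$ graph together with another such graph of the appropriate type can be realized inside $\conv^2$, this yields \cref{thm:conv2}.
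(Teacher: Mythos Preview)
Your proposal is correct and follows essentially the same approach as the paper. The paper's proof is more concise: it simply invokes that $\conv$ is closed under disjoint union (which is exactly your concatenation/shifting argument) and then observes that $(G_1 \dist G'_1) \cap (G_2 \dist G'_2) = (G_1 \cap G_2) \dist (G'_1 \cap G'_2)$, whereas you unfold the closure of $\conv$ under disjoint union explicitly via geometric representations; the bipartition-alignment subtlety you raise is handled implicitly in the paper by the same freedom you identify.
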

\begin{proof}
Let $G$, $G'$ be any disjoint graphs in $\conv^2$. Then we have graphs $G_1, G_2 \in \conv$, such that $G_1 \cap G_2 = G$, and graphs $G'_1, G'_2 \in \conv$, such that $G'_1 \cap G'_2 = G'$. Since $\conv$ is closed under disjoint union, then $G_1 \dist G'_1 \in \conv$ and $G_2 \dist G'_2 \in \conv$. Then $(G_1 \dist G'_1 )\cap (G_2 \dist G'_2) \in \conv^2$. Since $G_1$ only shares vertices with $G_2$, and similarly, $G'_1$ only shares vertices with $G'_2$, then $(G_1 \dist G'_1 )\cap (G_2 \dist G'_2) = (G_1 \cap G_2) \dist (G'_1 \cap G'_2) = G \dist G' \in \conv^2$.

\end{proof}

\begin{lemma} \label{lem:prig_in_conv2}
    $\prig \subseteq \conv^2$.
\end{lemma}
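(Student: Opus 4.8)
The plan is to show directly that any point--rectangle intersection graph can be written as the intersection of two convex graphs, by projecting onto the two coordinate axes. Let $G = (U \cup V, E) \in \prig$ with a point-rectangle intersection representation $(\Hor, \Ver, \phi)$, where $\phi: U \mapsto \Hor$ maps to axis-aligned rectangles and $\phi: V \mapsto \Ver$ maps to points in $\R^2$. For each $w \in U \cup V$, write $\phi(w).x$ and $\phi(w).y$ for the projections of $\phi(w)$ to the $x$- and $y$-axes; for $u \in U$ these are closed intervals and for $v \in V$ these are points.

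First I would define two bipartite graphs $G_1, G_2$ on the common vertex set $U \cup V$: put $\{u, v\} \in E(G_1)$ iff $\phi(u).x$ contains the point $\phi(v).x$, and $\{u,v\} \in E(G_2)$ iff $\phi(u).y$ contains $\phi(v).y$. Each $G_i$ is, by construction, a containment graph of intervals and points on a line, hence lies in $\conv$ (equivalently, it is convex over $V$: order the points of $V$ by their coordinate, and the neighborhood of each $u$ is the contiguous block of points falling in the interval $\phi(u).x$). Second, I would verify $G_1 \cap G_2 = G$: a rectangle $\phi(u)$ contains a point $\phi(v)$ if and only if $\phi(u).x \ni \phi(v).x$ and $\phi(u).y \ni \phi(v).y$, which is exactly the conjunction of the two edge conditions. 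Since $G_1$ and $G_2$ share the same vertex set, $E(G_1 \cap G_2) = E(G_1) \cap E(G_2) = E(G)$, so $G \in \conv^2$.

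There is essentially no obstacle here; the only minor point to be careful about is the degenerate-interval convention (points and intervals may coincide or touch at endpoints), but since containment of a point in a closed interval is exactly the condition defining both $\conv$ and $\prig$, the equivalence is immediate. This also completes the chain $\prig \subseteq \conv^2 \subseteq \chain^4$ used in the proof of \cref{thm:dichotomy}, and together with \cref{lem:conv2_is_gig_prig} and \cref{lem:conv2_disjoint_union} it shows that, over a fixed vertex set, $\prig$ is precisely the $\prig$-component piece of a $\conv^2$ graph.
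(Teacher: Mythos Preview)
Your proposal is correct and follows essentially the same approach as the paper: project the point--rectangle representation onto the two coordinate axes to obtain two interval--point (hence $\conv$) graphs on the same vertex set, and observe that their intersection is $G$. The only cosmetic difference is which side is mapped to points versus rectangles, which is immaterial.
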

\begin{proof}
Let $G = (U \cup V, E)$ be any graph in $\prig$, and $(\Hor, \Ver, \phi)$ the representation of $G$ in $\prig$, such that $\Hor$ is a set of points and $\Ver$ a set of rectangles in $\R^2$. Then we can construct mappings $\phi_1$ and $\phi_2$, such that for all $w \in V(G)$, $\phi_1(w)$ and $\phi_2(w)$ are the projections of $\phi(w)$ to $x$ and $y$ axis respectively. Let $G_1$ and $G_2$ be the graphs represented by $\phi_1(w)$ and $\phi_2(w)$ respectively. Since $\phi_1(w)$ maps the vertices $V(G)$ to points and segments based on their partitions, then $G_1\in \conv$. Similarly for $G_2$. Since for any $u \in U$, $v \in V$, we have that $\phi(u)$ intersects $\phi(v)$ if and only if $\phi_1(u)$ intersects $\phi_1(v)$ and $\phi_2(u)$ intersects $\phi_2(v)$, then $G_1 \cap G_2 = G$. This proves that $G \in \conv^2$.
\end{proof}

\begin{lemma} \label{lem:gig_in_conv2}
    $\gig \subseteq \conv^2$.
\end{lemma}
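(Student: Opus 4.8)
The plan is to realise $G$ as the intersection of its two axis-projections. Fix a grid intersection representation $(\Hor, \Ver, \phi)$ of $G = (U \cup V, E)$: every $u \in U$ is a horizontal segment $\phi(u)$, whose projection $\phi(u).x$ to the $x$-axis is a closed interval and whose projection $\phi(u).y$ to the $y$-axis is a single point; dually, every $v \in V$ is a vertical segment, with $\phi(v).x$ a point and $\phi(v).y$ a closed interval. This is exactly the ``rectangle $=$ product of two intervals'' picture underlying \Cref{lem:prig_in_conv2}, with segments in place of rectangles and points.

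First I would set $\phi_1 \colon w \mapsto \phi(w).x$ and $\phi_2 \colon w \mapsto \phi(w).y$, and let $G_1, G_2$ be the intersection bigraphs they define on the common vertex set $U \cup V$. Since $\phi_1$ sends $U$ to intervals and $V$ to points of $\mathbb{R}$, the graph $G_1$ is a point--interval intersection bigraph; ordering $V$ by these point coordinates makes $N(u)$ contiguous for every $u \in U$, so $G_1$ is convex over $V$ and hence $G_1 \in \conv$. Symmetrically $\phi_2$ sends $U$ to points and $V$ to intervals, so $G_2 \in \conv$ as well (reading the bipartition with the two sides interchanged, exactly as in the proof of \Cref{lem:prig_in_conv2}).

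It remains to check $E(G) = E(G_1) \cap E(G_2)$. For $u \in U$ and $v \in V$, a horizontal and a vertical segment in $\mathbb{R}^2$ meet if and only if both their $x$-projections and their $y$-projections meet, i.e.\ iff the point $\phi(v).x$ lies in the interval $\phi(u).x$ and the point $\phi(u).y$ lies in the interval $\phi(v).y$. The first condition is precisely $\{u,v\} \in E(G_1)$ and the second precisely $\{u,v\} \in E(G_2)$, so $\{u,v\} \in E(G)$ iff $\{u,v\} \in E(G_1) \cap E(G_2)$. As $V(G_1) = V(G_2) = U \cup V$, this yields $G = G_1 \cap G_2$ with $G_1, G_2 \in \conv$, hence $G \in \conv^2$.

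I do not expect a real obstacle: the argument is the direct analogue of \Cref{lem:prig_in_conv2}. The only points needing a word of care are that the two factor graphs are built on the same vertex set (so that the graph intersection is meaningful) and that $\conv$ is symmetric in which side of the bipartition carries the points -- so that $G_2$, in which the points sit on $U$ rather than on $V$, still counts as a member of $\conv$.
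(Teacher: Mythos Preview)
Your proposal is correct and follows essentially the same approach as the paper: project the grid representation to the two axes to obtain two point--interval bigraphs $G_1,G_2\in\conv$ and verify $G=G_1\cap G_2$. The paper's proof is literally just ``symmetric to \Cref{lem:prig_in_conv2}'', which is exactly what you have written out in detail.
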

\begin{proof}
The proof is symmetric to \cref{lem:prig_in_conv2}.
\end{proof}

\textbf{\Cref{thm:conv2}}:
    Any graph $G = (U \cup V, E) \in \conv^2$ if and only if it is a disjoint union of $\gig$ and $\prig$ graphs.

This follows from $\cref{lem:conv2_is_gig_prig}$, $\cref{lem:conv2_disjoint_union}$, $\cref{lem:prig_in_conv2}$, and $\cref{lem:gig_in_conv2}$.

\section{Segment Bottomless Rectangle Containment Bigraph} \label{sec:chain3_is_bot}

We say that $(\Hor, \Ver, \phi)$ is an interval containment representation of a graph $G = (U \cup V, E)$, if $\Hor$, $\Ver$ in $\mathbb{R}$ are intervals, $\phi: U \mapsto \Hor, V \mapsto \Ver$, and for all $u\in U$, $v\in V$, we have $\{u, v\} \in E$ if and only if $\phi(u)$ is contained within $\phi(v)$. A graph $G$ is an interval containment bigraph, if it has an interval containment representation. According to \cite{chaplick2014intersection}, the class of interval containment bigraphs is $\chain^2$.

\begin{proposition} \label{lem:int_cont_symmetric}
    If a graph $G = (U \cup V, E) \in \chain$ has a point ray representation $(\Hor, \Ver, \phi)$, $\phi: U \mapsto \Hor, V \mapsto \Ver$, then it also has a point ray representation $(\Hor', \Ver', \phi')$, $\phi': V \mapsto \Hor', U \mapsto \Ver'$, where $\Hor, \Hor'$ are points and $\Ver, \Ver'$ are rays.
\end{proposition}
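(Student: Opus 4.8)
The plan is to exhibit the required representation explicitly, via a single coordinate reflection that simultaneously swaps the roles of points and rays. First I would fix conventions: a point–ray representation of a $\chain$ graph assigns to one side a set of points in $\R$ and to the other side a set of rightward rays $[a,\infty)$, with a point $p$ and a ray $[a,\infty)$ declared adjacent exactly when $p\ge a$. The behaviour at the endpoint is immaterial, since a generic perturbation of all the (finitely many) coordinates removes ties without changing any adjacency, so I may assume all the relevant inequalities are strict.

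Given the representation $(\Hor,\Ver,\phi)$ with $\phi\colon U\mapsto\Hor$ and $\phi\colon V\mapsto\Ver$, write $\phi(u)=p_u\in\R$ for $u\in U$ and $\phi(v)=[a_v,\infty)$ for $v\in V$, so that $\{u,v\}\in E$ iff $p_u\ge a_v$. I would then define $\phi'$ by setting $\phi'(v):=-a_v$ (a point) for every $v\in V$ and $\phi'(u):=[-p_u,\infty)$ (a rightward ray) for every $u\in U$. Accordingly $\Hor':=\{-a_v:v\in V\}$ is a set of points, $\Ver':=\{[-p_u,\infty):u\in U\}$ is a set of rightward rays, and $\phi'\colon V\mapsto\Hor'$, $\phi'\colon U\mapsto\Ver'$ has exactly the shape demanded by the statement. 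To finish, observe that for $u\in U$, $v\in V$ the point $\phi'(v)=-a_v$ lies in the ray $\phi'(u)=[-p_u,\infty)$ iff $-a_v\ge -p_u$ iff $p_u\ge a_v$, which by construction holds iff $\{u,v\}\in E$; hence $(\Hor',\Ver',\phi')$ represents $G$, with the two sides' roles interchanged.

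I do not foresee a genuine obstacle here. The only point that deserves (minor) care is the treatment of the ray-endpoint convention, i.e.\ whether adjacency uses a strict or a non-strict inequality; this is disposed of once and for all by putting the coordinates in general position. One should also note that the reflection $x\mapsto -x$ carries the starting point of a rightward ray to the starting point of a rightward ray, so the "rightward" orientation required of $\Ver'$ is automatically preserved. (An alternative, non-geometric route would be to invoke the characterisation of $\chain$ graphs as those with a biadjacency matrix permutable to Ferrers staircase form and use that the transpose of a Ferrers matrix is again Ferrers, but the direct construction above is cleaner.)
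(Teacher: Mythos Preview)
Your proof is correct and is essentially the same construction as the paper's: the paper first turns each point $p_u$ into the leftward ray $(-\infty,p_u]$ and each ray into its starting point, then ``flips'' to obtain rightward rays, which is exactly your reflection $x\mapsto -x$ done in one step instead of two. (Your parenthetical remark that the reflection ``carries the starting point of a rightward ray to the starting point of a rightward ray'' is misphrased---a rightward ray reflects to a leftward one---but the explicit definition $\phi'(u)=[-p_u,\infty)$ and the adjacency check are correct regardless.)
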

\begin{proof}
    We are given $G = (U \cup V, E) \in \chain$ with a point ray representation $(\Hor, \Ver, \phi)$, $\phi: U \mapsto \Hor, V \mapsto \Ver$, where $\Hor$ represents points and $\Ver$ represents rays extending to $\infty$ in $\R$.

    For $\ver \in \Ver$, let $\ver.p$ denote the minimum point in ray $\ver$. Then for all $u\in U$, $v \in V$, we say that $\phi(v).p < \phi(u)$, if and only if $\{u, v\} \in E$. 
    
    We construct $(\Hor', \Ver', \phi')$ such that for each $u \in U$, $\phi'(u) = (-\infty, \phi(u)]$, $\phi'(v) = \phi(v).p$. Then for all $u\in U$, $v \in V$, $\phi'(u)$ and $phi'(v)$ intersect if $\phi'(v) < \phi'(u).p$, which happens if and only if $\phi(v).p < \phi(u)$. This implies that $(\Hor', \Ver', \phi')$ is a representation of $G$, where the partition $U$ is mapped to rays and $V$ to points. By flipping the representation, it is easy to see that there is an equivalent representation where rays extend to $\infty$.
\end{proof}

\textbf{\cref{lem:chain3_is_brc}:}
A graph $G$ is of Ferrers dimension three ($G \in \chain^3$) if and only if it is a bottomless rectangle segment containment graph.  

\begin{proof}

Let $(\Se, \Bo, \phi)$ be a segment bottomless rectangle representation of graph $G = (U \cup V, E)$, such that $\phi: U \mapsto \Se, V \mapsto \Bo$. We will see that $G \in \chain^3$.

First, we consider the graph $G_x = (U \cup V, E_x)$ represented by the projections of $\Se, \Bo$ to the $x$ axis. Since $G_x$ is an interval containment bigraph, then $G_x \in \chain^2$ (\cite{saha2014permutation}, \cite{chaplick2014intersection}). Secondly, consider the graph $G_y = (U \cup V, E_y)$ represented by the projections of $\Se, \Bo$ to the $y$ axis. Since $G_y$ is an intersection graph of points and rays, then $G_y \in \chain$. Since each $b \in \Bo$ contains $s \in \Se$ if and only if $s.x \subseteq b.x$ and $ s.y \subseteq b.y$, then $G = G_x \cap G_y \in \chain^2 \cdot \chain = \chain^3$.

Next, we will show that any graph $G = (U \cup V, E) \in \chain^3$ has a segment bottomless rectangle representation. According to \cite{chaplick2014intersection}, there exist graphs $G_x = (U \cup V, E_x)\in \chain^2$ and $G_y = (U \cup V, E_y)\in \chain$, such that $G_x \cap G_y = G$. Let $(\Hor_x, \Ver_x, \phi_x)$ be an interval containment representation of $G_x$, such that $\Hor$ represent the contained segments and $\Ver$ the containing ones. Let us consider the case that $\phi_x: U \mapsto \Hor_x, V \mapsto \Ver_x$, the case that $\phi_x: V \mapsto \Hor_x, U \mapsto \Ver_x$ is symmetric. 
Let $(\Hor_y, \Ver_y, \phi_y)$ be a point ray representation of $G_y$, such that $\Hor_y$ are points and $\Ver_y$ are rays,  $\phi_y: U \mapsto \Hor_y, V \mapsto \Ver_y$ (\cref{lem:int_cont_symmetric}).

We construct a segment bottomless rectangle representation $(\Se, \Bo, \phi)$, $\phi: U \mapsto \Se, V \mapsto \Bo$, such that for all $w \in U \cup V$, $\phi(w).x = \phi_x(w)$, $\phi(w).y = \phi_y(w)$. Let $G'$ be the graph represented by $(\Se, \Bo, \phi)$. For all $u \in U$, $v \in V$, $\phi(u)$ is contained within $\phi(v)$ if and only if $\phi_x(u)$ is contained within $\phi_x(v)$ and $\phi_y(u)$ is contained within $\phi_y(v)$. This implies that $G' = G_x \cap G_y$, which means that $(\Se, \Bo, \phi)$ is a representation of $G$.

\end{proof}

\section{Grid Intersection Graphs} \label{sec:gig_appendix}

We say $\hor \in N(\ver)$ is \textbf{generous to $\ver$} (or simply generous), if $\cref{alg:close_type}$ pays any credits to $\ver$ from $\hor$. We consider the remaining segments in $N(\ver)$ \textbf{stingy} to $\ver$.

First, we will see that if for a section of a vertical node, not enough credits are gained from \cref{alg:close_type}, then there are horizontal segments in that section, which will contribute to \cref{alg:estranged_type} instead.

\begin{lemma} \label{lem:gig_box}
    Let $S \subseteq N(\ver)$, and $S' \subseteq S$ be the stingy subset, such that 
    \begin{itemize}
        \item $|S| \geq 3(k-1)$
        \item $|S - S'| \leq k-1$
        \item $N(\ver) \subseteq \dir_{up} (S) \cup \dir_{down} (S) \cup S $ ($S$ contains consecutive neighbors of $\ver$)
    \end{itemize}
    Then there exists a set $S^* \in \dir_{right}(\ver)$, such that
    \begin{itemize}
        \item $|S^*| \geq k-1$
        \item $S^*.y \subseteq S.y$
        \item $\bigcap_{\hor \in S' \cup S^*} \hor.x  \neq  \emptyset$
    \end{itemize}
\end{lemma}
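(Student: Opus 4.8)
The plan is to read off, from the failure of \cref{alg:close_type} to pay $\ver$ out of the stingy segments, a family of ``witness'' vertical segments sitting just to the right of $\ver$, and then to mine their horizontal neighbourhoods for the set $S^*$.

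\emph{Bookkeeping and the pivot.} From $|S-S'|\le k-1$ and $|S|\ge 3(k-1)$ we get $|S'|\ge 2(k-1)$, and we may assume $k\ge 2$ (the case $k=1$ is vacuous). Every $\hor\in S'$ meets $\ver$, so $I:=\bigcap_{\hor\in S'}\hor.x$ is a closed interval containing $\ver.x$, and after an infinitesimal perturbation of the drawing $\max(I)>\ver.x$. Fix $\hor_0\in S'$ with $\max(\hor_0.x)=\max(I)$, i.e.\ the stingy segment whose right endpoint is leftmost. The point of this choice is the trivial observation that any segment $\horver$ with $\ver.x<\horver.x\le\max(\hor_0.x)$ has $\horver.x\in\hor.x$ for every $\hor\in S'$, hence $\horver.x\in I$.

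\emph{Stinginess produces witnesses.} Using the third hypothesis (that $S$ is a block of consecutive neighbours of $\ver$) together with the ambient bounds $|N(\ver)\cap\dir_{down}(S)|,\,|N(\ver)\cap\dir_{up}(S)|\ge 3(k-1)$ that hold in the setting in which this lemma is applied, one checks --- via $\dir_{down}(S)\subseteq\dir_{down}(\hor_0)$ and its upward analogue --- that $\ver$ is both down-heavy and up-heavy with respect to $\hor_0$ in the sense of \cref{def:down_heavy,def:up_heavy}. Since $\hor_0$ is stingy, \cref{alg:close_type} pays nothing to $\ver$ from $\hor_0$; in particular $\ver$ is neither among the $k-1$ leftmost nor among the $k-1$ rightmost down-heavy vertical segments in $N(\hor_0)$. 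Hence at least $k-1$ down-heavy neighbours of $\hor_0$ lie strictly to the right of $\ver$, and each such witness $\ver'$ satisfies $\ver'.x\in\hor_0.x$, so $\ver.x<\ver'.x\le\max(\hor_0.x)$ and $\ver'.x\in I$.

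\emph{Mining $S^*$, and the main obstacle.} Each witness $\ver'$ is down-heavy with respect to $\hor_0$, so it has at least $3(k-1)$ horizontal neighbours of $y$-coordinate strictly below $\hor_0.y\le\max(S.y)$, all crossing the line $x=\ver'.x\in I$. It then suffices to find, for some witness $\ver'$, a set $S^*$ of $k-1$ of these neighbours that additionally lie entirely to the right of $\ver$ and have $y$-coordinate at least $\min(S.y)$: then $S^*\subseteq\dir_{right}(\ver)$, $S^*.y\subseteq S.y$, and $\ver'.x\in\bigcap_{\hor\in S'\cup S^*}\hor.x$, which is exactly the conclusion. The crux --- and the main obstacle --- is this extraction: one must bound the number of a witness' low neighbours that are ``too wide'' (reaching left of $\ver.x$) or ``too low'' ($y<\min(S.y)$), so that at least $k-1$ survive. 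A too-wide low neighbour of $\ver'$ spans $[\ver.x,\ver'.x]$ and, once its height exceeds $\min(\ver.y)$, is a neighbour of $\ver$ sitting below $\hor_0$; the block structure of $S$ confines such segments, and a judicious choice of $\hor_0$ (or of the witness among the $\ge k-1$ available, together with the symmetric up-heavy witnesses) keeps them under the budget $k-1$. The ``too low'' neighbours must be ruled out using $K_{k,k}$-freeness: if one witness had $k-1$ of them, these $k-1$ horizontal segments together with enough of the witnesses --- which all dip below $\hor_0.y$ and straddle the relevant vertical strip --- would span a $(k-1)\times(k-1)$ biclique, and adjoining $\ver$ and $\hor_0$ would force $K_{k,k}$. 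Making both counting budgets fit simultaneously inside the slack $3(k-1)$ is where the real work lies; the rest is routine geometry and counting.
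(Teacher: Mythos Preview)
Your setup coincides with the paper's: choose $\hor_0\in S'$ with smallest right endpoint, and use stinginess of $\hor_0$ (together with $\ver$ being down-heavy with respect to $\hor_0$, which indeed needs the ambient hypothesis you flag) to produce $k-1$ down-heavy witnesses $Q\subseteq N(\hor_0)\cap\dir_{right}(\ver)$. From there, however, your extraction plan diverges from the paper's and contains a real gap.

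Your sketched $K_{k,k}$-argument for the ``too low'' neighbours does not assemble. You propose that $k-1$ too-low horizontal neighbours of some witness, together with ``enough of the witnesses'', give a $(k-1)\times(k-1)$ biclique to which one can adjoin $\ver$ and $\hor_0$. But a too-low segment has $y$-coordinate below $\min(S.y)$, and there is no reason $\ver$ reaches that far down; moreover, a too-low neighbour of one witness $\ver'$ need not contain $\ver''.x$ for another witness $\ver''$, so the cross edges between too-low horizontals and the other witnesses are not guaranteed either. Thus neither side of the intended $K_{k,k}$ closes up.

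The paper's manoeuvre is different and avoids too-low segments entirely. It singles out the witness $\ver'\in Q$ with the \emph{largest} $\min(\ver'.y)$ and forms the biclique on $Q\cup\{\ver\}$ versus $\{\hor_0\}\cup(S'_{down}\cap N(\ver'))$, where $S'_{down}=S'\cap\dir_{down}(\hor_0)$. These horizontals are in $S'\subseteq N(\ver)$, so they all meet $\ver$; the choice of $\hor_0$ (smallest right endpoint in $S'$) forces every $q\in Q$ to satisfy $\ver.x<q.x\le\max(\hor_0.x)\le\max(\hor''.x)$ for each $\hor''\in S'$; and the choice of $\ver'$ (largest bottom among $Q$) forces every $q\in Q$ to reach any $\hor''\in S'_{down}\cap N(\ver')$ on the $y$-axis. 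Freeness of $K_{k,k}$ then gives $|S'_{down}\cap N(\ver')|\le k-2$, and since $|S'_{down}|\ge k-1$ some $\hor''\in S'_{down}$ lies below $\min(\ver'.y)$; hence $\min(\ver'.y)>\min(S.y)$ and $\ver'$ has \emph{no} too-low neighbours at all. After that, $S^*:=\bigl(N(\ver')\cap\dir_{down}(\hor_0)\bigr)\setminus S$ works by a straight count. So the missing idea in your plan is not a tighter budget split but the specific choice of $\ver'$ and the use of $S'_{down}$ (rather than the too-low segments) as the horizontal side of the biclique.
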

\begin{proof}
    The number of stingy nodes is at least $2(k-1)$. Let the stingy subset of $S$ be $S'$. For any $\hor \in S'$, there are at least $k-1$ segments in either $\dir_{up}(\hor)\cap S'$ or $\dir_{down}(\hor)\cap S'$.
    
    Let $\hor \in S'$ be the segment with the smallest $\max(\hor.x)$. Let $\dir_{down}(\hor)\cap S' = S'_{down}$ and $\dir_{up}(\hor)\cap S' = S'_{up}$. We consider the case that $|S'_{down}| \geq k-1$. The case that $|S'_{up}| \geq k-1$ is symmetric. Let $Q$ be the $(k-1)$ rightmost down-heavy vertical segments in \cref{alg:close_type}, that are each paid $\frac{9}{2}$ credits from $\hor$. Let $\ver' \in Q$ be the vertical segment with the largest $\min(\ver'.y)$. 

    Since $\ver' \in Q$ has the largest $\min(\ver'.y)$ and $\hor \in S'$ has the smallest $\max(\hor.x)$, then $\{\hor\} \cup S'_{down} \cap N(v')$ and $Q \cup \{\ver\}$ induce a biclique (\cref{fig:GIG_degree_RD}). Since $|Q \cup \{\ver\}| = k$ and there cannot exist a $K_{k, k}$, then $|S'_{down} \cap N(v')| \leq k-2$, which implies that $\min(\ver'.y)$ is greater than $\min(S.y)$ ($\ver'$ does not extend far enough down to include $k-1$ stingy nodes below $\hor$).

    \begin{figure}[h]
        \centering
        \includegraphics[scale=0.18]{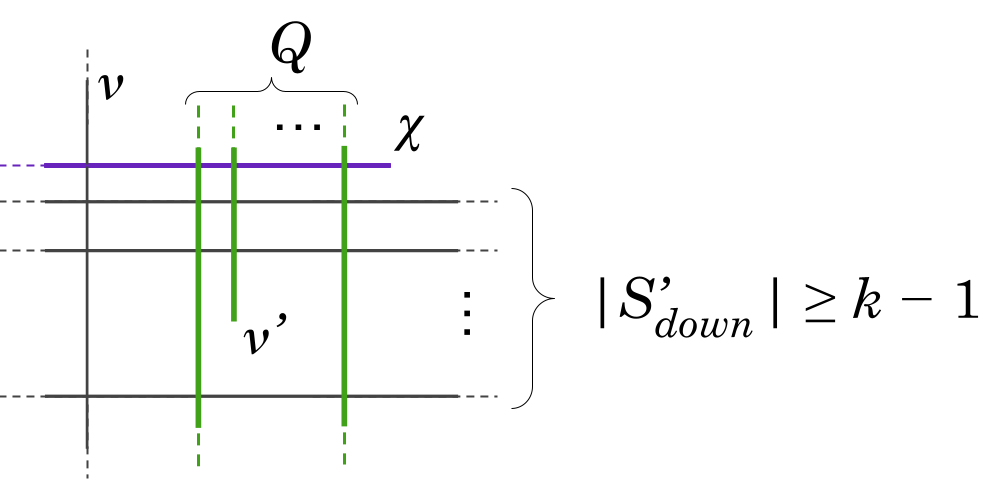}
        \caption{ Depicted are the stingy horizontal segments below $\ver$, the vertical segments $Q$ (green), and $\ver$.}
        \label{fig:GIG_degree_RD}
    \end{figure} 

    Since $\ver'$ is down-heavy w.r.t $\hor$, then $|N(\ver') \cap \dir_{down}(\hor)| \geq 3(k-1)$ (\cref{def:down_heavy}). Let $\Gamma = N(\ver') \cap \dir_{down}(\hor)$. Note that  $\Gamma.y \subseteq S.y$, because $\min(\ver'.y)$ is greater than $\min(S.y)$ and $\max(\Gamma.y) < \hor.y$. Due to how $S$ was chosen (the last condition), this implies that the nodes in $\Gamma$ that intersect $\ver$ are in $S$.

    Since $(k-1)$ nodes of $S'_{down} = \dir_{down}(\hor)\cap S'$ cannot be in $N(\ver')$, then $|\Gamma \cap S'| \leq k-2$. Since $|S - S'| \leq k-2$, then $|\Gamma - S| \geq 3(k-1) - (k-2) - (k-2) = k+1$.

    Let us consider the conditions for $S^* = \Gamma - S$.
    Since $\Gamma \in N(\ver')$, $\ver' \in Q \subseteq \dir_{right}(\ver)$, and none of $\Gamma - S$ intersect $\ver$, then $\Gamma - S \in \dir_{right}(\ver)$. We already saw that $|\Gamma - S| \geq k-1$ and $\Gamma.y \subseteq S.y$. Since $\hor \in S'$ is the segment with the smallest $\max(\hor.x)$, then all intervals in $S'$ projected to the $x$ axis must include $\ver'.x$. We have $\ver'.x \in \bigcap_{\hor \in S' \cup \Gamma} \hor.x$, which fulfills the last condition.

\end{proof}

\begin{lemma} \label{lem:gig_box_left}
    Let $S \subseteq N(\ver)$, and $S' \subseteq S$ be the stingy subset, such that 
    \begin{itemize}
        \item $|S| \geq 3(k-1)$
        \item $|S - S'| \leq k-1$
        \item $N(\ver) \subseteq \dir_{up} (S) \cup \dir_{down} (S) \cup S $ ($S$ contains consecutive neighbors of $\ver$)
    \end{itemize}
    Then there exists a set $S^* \in \dir_{left}(\ver)$, such that
    \begin{itemize}
        \item $|S^*| \geq k-1$
        \item $S^*.y \subseteq S.y$
        \item $\bigcap_{\hor \in S' \cup S^*} \hor.x  \neq  \emptyset$
    \end{itemize}
\end{lemma}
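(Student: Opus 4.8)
The plan is to mirror the proof of \cref{lem:gig_box} essentially verbatim, reflecting all ``right''/``rightmost'' references to ``left''/``leftmost'' and swapping the roles of $\min(\hor.x)$ and $\max(\hor.x)$. Concretely, the hypotheses on $S$ and $S'$ are identical, so the number of stingy nodes is still at least $2(k-1)$, and for any $\hor \in S'$ at least $k-1$ of the stingy segments lie in $\dir_{up}(\hor) \cap S'$ or in $\dir_{down}(\hor)\cap S'$.

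First I would pick $\hor \in S'$ to be the segment with the \emph{largest} $\min(\hor.x)$ (instead of the smallest $\max(\hor.x)$), and again split $S' \setminus \{\hor\}$ into $S'_{down} = \dir_{down}(\hor)\cap S'$ and $S'_{up} = \dir_{up}(\hor)\cap S'$, treating the case $|S'_{down}| \geq k-1$ (the other case being symmetric). Then I would let $Q$ be the $(k-1)$ \emph{leftmost} down-heavy vertical segments that \cref{alg:close_type} pays from $\hor$, and take $\ver' \in Q$ to be the one with the largest $\min(\ver'.y)$. The same biclique argument applies: since $\hor$ has the largest $\min(\hor.x)$ among $S'$ and the segments in $Q$ lie to the left of $\hor$, every segment in $\{\hor\} \cup (S'_{down}\cap N(\ver'))$ meets every segment in $Q \cup \{\ver\}$; forbidding $K_{k,k}$ forces $|S'_{down}\cap N(\ver')| \leq k-2$, hence $\min(\ver'.y) > \min(S.y)$.

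Next, as in \cref{lem:gig_box}, down-heaviness of $\ver'$ gives $|\Gamma| \geq 3(k-1)$ for $\Gamma = N(\ver')\cap \dir_{down}(\hor)$, and $\Gamma.y \subseteq S.y$ because $\min(\ver'.y) > \min(S.y)$ and $\max(\Gamma.y) < \hor.y$; the third hypothesis on $S$ then places the elements of $\Gamma$ meeting $\ver$ inside $S$. Counting as before, $|\Gamma\cap S'|\leq k-2$ and $|S\setminus S'|\leq k-1$ yield $|\Gamma\setminus S| \geq 3(k-1)-(k-2)-(k-1) \geq k-1$. Setting $S^* = \Gamma\setminus S$: it lies in $\dir_{left}(\ver)$ since $\ver'\in Q\subseteq \dir_{left}(\ver)$ and none of $S^*$ meets $\ver$; $S^*.y\subseteq \Gamma.y\subseteq S.y$; and because $\hor$ now has the \emph{largest} $\min(\hor.x)$, every interval in $S'$ projected to the $x$-axis contains $\ver'.x$, so $\ver'.x \in \bigcap_{\hor\in S'\cup S^*}\hor.x$, giving the last condition.

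I do not expect a genuine obstacle here: the only thing to be careful about is the reflection bookkeeping — which extremal segment to pick (largest $\min(\hor.x)$ vs.\ smallest $\max(\hor.x)$), and which side of $\ver$ the set $Q$ and hence $S^*$ ends up on — and making sure the ``leftmost down-heavy/up-heavy'' payments in \cref{alg:close_type} are exactly the ones invoked. One should double-check the off-by-one in the final count ($3(k-1)-(k-2)-(k-1) = k-1$, using $|S\setminus S'|\leq k-1$ rather than the $k-2$ that appears in the statement of \cref{lem:gig_box}), but it still yields the required $|S^*|\geq k-1$.
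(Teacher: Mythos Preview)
Your proposal is correct and matches the paper's own proof, which simply reads ``The proof is symmetric to \cref{lem:gig_box}''; you have faithfully carried out that reflection. One tiny slip: your final count $3(k-1)-(k-2)-(k-1)$ actually equals $k$, not $k-1$, and the $k-2$ you flag appears in the paper's \emph{proof} of \cref{lem:gig_box} rather than its statement (the statement has $|S\setminus S'|\le k-1$, so the paper's proof itself is off by one there) --- but either way the needed bound $|S^*|\ge k-1$ follows.
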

\begin{proof}
    The proof is symmetric to \cref{lem:gig_box}.
\end{proof}

\vspace{0.5cm}
\textbf{\cref{lem:payments_from_interval}}: 
    Let $S \subset N(\ver)$, such that 
    \begin{itemize}
        \item $|S| \geq 3(k-1)$
        \item $|N(\ver) \cap \dir_{up} (S)| \geq 3(k-1)$ ($S$ is not among the highest elements of $N(\ver)$)
        \item $|N(\ver) \cap \dir_{down} (S)| \geq 3(k-1)$ ($S$ is not among the lowest elements of $N(\ver)$)
        \item $N(\ver) \subseteq \dir_{up} (S) \cup \dir_{down} (S) \cup S $ ($S$ contains consecutive neighbors of $\ver$)
    \end{itemize}
    At least $\frac{9}{2} (k-1)$ credits are paid to $\ver$ from horizontal segments $\hor \in \Hor$, such that $\hor.y \in S.y$.

\begin{proof}
    If $S$ contains at least $(k-1)$ generous nodes, which each contribute at least $\frac{9}{2}$ credits ($\cref{alg:close_type}$), the lemma holds. Otherwise, let $S'$ be the set of stingy nodes in $S$. We have that $|S - S'| \leq k-2$.
    
    Let $R \in \dir_{right}(\ver)$ be the set of nodes such that $R.y \subseteq S.y$ and for each $\hor' \in R$, $\hor'.x$ intersects $\bigcap_{\hor \in S'} \hor.x$. According to \cref{lem:gig_box}, $|R| \geq k-1$. Let $\hat{R}$ be the $k-1$ segments $\hor \in R$ with the smallest $\min(\hor.x)$.

    We will see that \cref{alg:estranged_type} pays $\frac{9}{4}$ credits to $\ver$ from every segment $\hor \in \hat{R}$. Let $\hor \in \hat{R}$ be an arbitrary segment.
    Let $\dir_{down}(\hor)\cap S' = S'_{down}$ and $\dir_{up}(\hor)\cap S' = S'_{up}$. Since $|S'| \geq 3(k-1) - (k-2) = 2k - 1$, then either $|S'_{down}| \geq k$ or $|S'_{up}| \geq k$. We consider the case that $|S'_{down}| \geq k$. The case that $|S'_{up}| \geq k$ is symmetric. Let $Q \subseteq \dir_{left}(\hor)$ be the $(k-1)$ rightmost down-heavy vertical segments in \cref{alg:estranged_type}, that are each paid $\frac{9}{4}$ credits from $\hor$. Note that if $\ver \in Q$, then we have shown that \cref{alg:estranged_type} pays $\frac{9}{4}$ credits to $\ver$ from $\hor$. Since, according to our choice of $S$, it is not among the lowest neighbors of $N(\ver)$, then $\ver$ is down-heavy concerning $\hor$. This implies that $\ver\in Q$, unless all nodes of $Q$ are to the right of $\ver$. Since we have reached our goal in the case of $\ver \in Q$, let us now consider only the case that $Q \in \dir_{right}(\ver)$ (\cref{fig:GIG_degree_RD_extended}). Let $\ver' \in Q$ be the vertical segment with the largest $\min(\ver'.y)$. 

    \begin{figure}[h]
        \centering
        \includegraphics[scale=0.18]{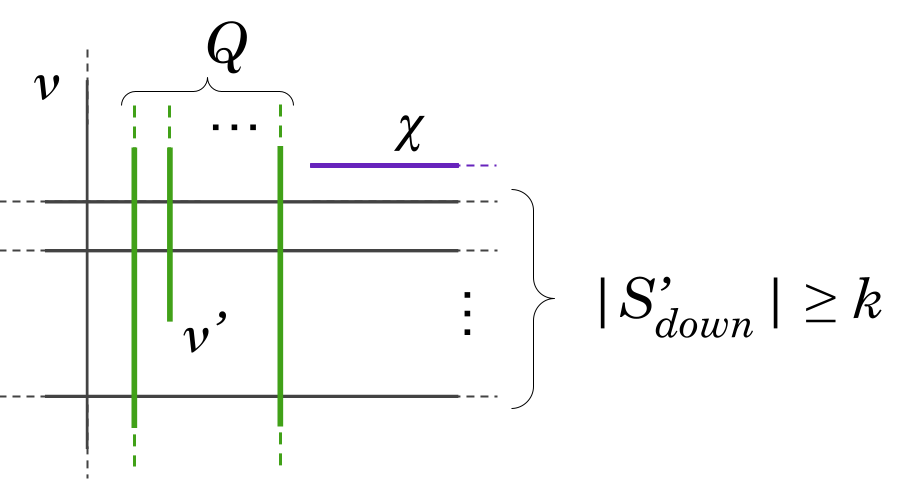}
        \caption{ Depicted are the stingy horizontal segments below $\ver$, the vertical segments $Q$ (green), and $\ver$.}
        \label{fig:GIG_degree_RD_extended}
    \end{figure} 

    Since $\ver$ and some point in $\hor.x$ must intersect $\bigcap_{\hor' \in S'_{down}}\hor'.x$ on the $x$ axis, then all segments in $Q$ (which are between them) also intersect all segments in $S'$ when projected to the $x$-axis. Since $\ver' \in Q$ has the largest $\min(\ver'.y)$, then $S'_{down} \cap N(v')$ and $Q \cup \{\ver\}$ induce a biclique (\cref{fig:GIG_degree_RD_extended}). Since $|Q \cup \{\ver\}| = k$ and there cannot exist a $K_{k, k}$, then $|S'_{down} \cap N(v')| \leq k-1$, which implies that $\min(\ver'.y)$ is greater than $\min(S.y)$ ($\ver'$ does not extend far enough down to include $k$ stingy nodes below $\hor$).
    
    Since $\ver'$ is down-heavy w.r.t $\hor$, then $|N(\ver') \cap \dir_{down}(\hor)| \geq 3(k-1)$ (\cref{def:down_heavy}). Let $\Gamma = N(\ver') \cap \dir_{down}(\hor)$. Note that  $\Gamma.y \subseteq S.y$, because $\min(\ver'.y)$ is greater than $\min(S.y)$ and $\max(\Gamma.y) < \hor.y$. Due to how $S$ was chosen (the last condition), this implies that the nodes in $\Gamma$ that intersect $\ver$ are in $S$. 
    Since $k$ nodes of $S'_{down} = \dir_{down}(\hor)\cap S'$ cannot be in $N(\ver')$, then $|\Gamma \cap S'| \leq k-1$. Since $|S - S'| \leq k-2$, then $|\Gamma - S| \geq 3(k-1) - (k-1) - (k-2) = k$.

    Since all nodes that intersect $\ver$ in $\Gamma$, must be in $S$, then $\Gamma - S = \Gamma \cap \dir_{right}(\ver) \cup \Gamma \cap \dir_{left}(\ver)$. Since, according to our assumption, $\ver' \in Q \subseteq \dir_{right}(\ver)$, then no horizontal segment in $\Gamma \in N(\ver')$ can be to the left of $\ver$ without intersecting $\ver$. Let $\hat{\Gamma} = (\Gamma - S)\cap \dir_{right}(\ver)$. Then we have $|\hat{\Gamma}| \geq k$. 

    As discussed, when projected to the $x$-axis,  all segments in $Q$ intersect all segments in $S'$.
    We have $\ver' \in Q$ and $\Gamma \in N(\ver')$. Therefore, on the $x$-axis, we have a point $\ver'.x$ that intersects $S'$ as well as $\Gamma$. According to how we defined $R$, we have $\hat{\Gamma} \subseteq R$. 
      
    Since for all $\hor^* \in \Gamma$, $\min(\hor^*.x) \leq v'.x < \min(\hor.x)$, then $\hat{R}$ contains $\hat{\Gamma}$, if it contains $\hor$. This implies that $|\hat{R}| \geq k$, which is a contradiction, since we chose $\hat{R}$ such that it would only contain $k-1$ segments. This implies that the case that $v \not \in Q$ is impossible.

    Let $L \in \dir_{left}(\ver)$ be the set of nodes such that $L.y \subseteq S.y$ and for each $\hor' \in L$, $\hor'.x$ intersects $\bigcap_{\hor \in S'} \hor.x$. According to \cref{lem:gig_box_left}, $|L| \geq k-1$. Let $\hat{L}$ be the $k-1$ segments $\hor \in L$ with the largest $\max(\hor.x)$. The proof that \cref{alg:estranged_type} pays $\frac{9}{4}$ credits to $\ver$ from every segment $\hor \in \hat{L}$ is symmetric.

\end{proof}

\section{Lower bounds}
\label{sec: lower bound}

\begin{lemma}\label{lem:lb_duplication}
    If there exists a $K_{2, 2}$-free $\prig$ graph $G$ on $n$ nodes and $m$ edges, then there exists a $K_{k, k}$-free $\prig$ graph $G'$ on $(k-1)\cdot n$ nodes and $(k-1)^2 \cdot m$ edges.
\end{lemma}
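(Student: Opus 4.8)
The plan is to take the given $K_{2,2}$-free $\prig$ graph $G$ and blow up each vertex into $k-1$ copies, while simultaneously modifying its geometric representation so that the new graph remains a $\prig$ graph and contains no $K_{k,k}$. First I would fix a point-rectangle intersection representation $(\Hor, \Ver, \phi)$ of $G = (U \cup V, E)$, where $\Hor$ is a set of points (for $U$) and $\Ver$ a set of rectangles (for $V$), or whichever orientation is convenient. The key idea is a local perturbation: replace each point $\phi(u)$ by a tiny cluster of $k-1$ distinct points packed into a sufficiently small neighborhood of $\phi(u)$, and replace each rectangle $\phi(v)$ by $k-1$ slightly enlarged (or perturbed) copies, all still ``essentially'' equal to $\phi(v)$, chosen so that the incidence pattern between clusters exactly mirrors the original incidence between $u$ and $v$.

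The main steps, in order, are: (1) choose $\varepsilon > 0$ smaller than the minimum gap between any point coordinate and any rectangle boundary coordinate appearing in the representation, so that shrinking/expanding by $\varepsilon$ does not create or destroy any incidence; (2) for each $u \in U$, place $k-1$ points $u^{(1)}, \dots, u^{(k-1)}$ within the $\varepsilon$-ball around $\phi(u)$, all in general position; for each $v \in V$, take $k-1$ rectangles $v^{(1)}, \dots, v^{(k-1)}$ each within Hausdorff distance $\varepsilon$ of $\phi(v)$ and each still containing (resp. avoiding) the $\varepsilon$-ball around $\phi(u)$ exactly when $\phi(v)$ contained (resp. avoided) $\phi(u)$; (3) verify that the resulting graph $G'$ on $(k-1)n$ vertices has an edge between $u^{(i)}$ and $v^{(j)}$ for every $i,j$ iff $\{u,v\} \in E$, so $G'$ has exactly $(k-1)^2 m$ edges and is a $\prig$ graph by construction; (4) argue that $G'$ is $K_{k,k}$-free: any $K_{k,k}$ in $G'$ would use $k$ vertices on the $U$-side, so by pigeonhole at least two of them, say $u^{(i)}$ and $\tilde u^{(i')}$, come from clusters of two \emph{distinct} original vertices $u \neq \tilde u$ (since a single cluster has only $k-1$ members); similarly two distinct original $V$-vertices appear; these four original vertices would form a $K_{2,2}$ in $G$, contradiction.

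For step (2), the cleanest way to realize the enlarged rectangles is simply to scale or translate each copy of $\phi(v)$ by an amount $\delta_j \ll \varepsilon$ depending on $j$, so that the $k-1$ copies are pairwise distinct but all have the same incidence behaviour with every $\varepsilon$-cluster; alternatively one can keep the rectangles genuinely identical and only perturb the points, since the definition of $\prig$ does not forbid coincident objects, but perturbing both sides makes the ``distinct copies'' bookkeeping transparent. The main obstacle I expect is purely a matter of careful bookkeeping rather than a conceptual difficulty: one must argue that the perturbation is small enough to preserve \emph{all} non-incidences (a point just outside a rectangle must stay outside after both are moved by $\varepsilon$) while the clustering is tight enough that no cluster alone can supply $k$ vertices to a biclique. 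Choosing $\varepsilon$ strictly smaller than every relevant coordinate gap handles the first concern, and the ``$k-1$ copies, need $k$'' pigeonhole handles the second, so the proof should go through cleanly once these constants are pinned down.
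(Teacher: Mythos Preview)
Your proposal is correct and follows essentially the same approach as the paper: blow up each vertex into $k-1$ copies, observe that each original edge becomes $(k-1)^2$ edges, and use pigeonhole (a $K_{k,k}$ must hit two distinct original vertices on each side, yielding a $K_{2,2}$ in $G$). The paper is actually less careful than you are---it simply takes identical copies without perturbation---so your $\varepsilon$-perturbation is extra rigor rather than a different idea.
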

\begin{proof}
Let $G = (U \cup V, E)$ be any $K_{2, 2}$-free graph in $\prig$, and $(\Hor, \Ver, \phi)$ the representation of $G$ in $\prig$, such that $\phi: U \mapsto \Hor, V \mapsto \Ver$,
$\Hor$ is a set of points and $\Ver$ a set of rectangles in $\R^2$. 

We construct a new graph $G = (U' \cup V', E')$, that is represented in $\prig$ by $(\Hor', \Ver', \phi')$, $\phi: U' \mapsto \Hor', V' \mapsto \Ver'$ as follows.
 For every $\hor \in \Hor$, we create $k-1$ copies in $\Hor'$, and  for every $\ver \in \Ver$, we create $k-1$ copies in $\Ver'$. It is easy to see that $|U'| = (k-1) |U|$, $|V'| = (k-1) |V|$. Let us see the number of edges.
 For every $u \in U$, $\phi(u)$ intersects with $|N(u)|$ rectangles in $G$. In $G'$, a copy of $\phi(u)$ intersects with all the copies of $\phi(N(u))$, which number in $(k-1) \cdot |N(u)|$. Let $u'$ be a node in $G'$ that is represented by a copy of $\phi(u)$.  The degree of $u'$ is $k-1$ times larger than the degree of $u$. Since the number of nodes in $U'$ is also $k-1$ times larger than in $U$, then $|E'| = |E| \cdot (k-1)^2$.

 Finally, let us see that $G'$ is $K_{k, k}$-free. Suppose by contradiction that $G'$ has $K_{k, k}$ as a subgraph. Let $S_U \subseteq U'$ and $S_V \subseteq V'$ be the nodes that induce $K_{k, k}$ in $G'$. Then there must be at least two nodes $u, u^* \in U$, whose copies form $S_U$, and at least two nodes $v, v^* \in V$ whose copies form $S_V$. Since $G'[S_U \cup S_V]$ is a biclique, then it has edges between all pairs of nodes. There can be an edge between a pair of nodes $\{u', v'\}$, $u'\in U'$, $v' \in V'$, if and only if that edge existed in $G$ between the nodes whose copies $u'$ and $v'$ are. This implies that $u, u^* \in U$ and $v, v^* \in V$ must have induced a $K_{2, 2}$ in $G$. This is a contradiction.
\end{proof}

Similarly, we can construct copies of other geometric intersection graphs. 
\begin{corollary} \label{cor:lb_duplication_GIG}
    If there exists a $K_{2, 2}$-free $\gig$ graph $G$ on $n$ nodes and $m$ edges, then there exists a $K_{k, k}$-free $\gig$ graph $G'$ on $(k-1)\cdot n$ nodes and $(k-1)^2 \cdot m$ edges.
\end{corollary}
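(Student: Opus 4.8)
The plan is to mirror the proof of \cref{lem:lb_duplication} almost verbatim, since the only property of $\prig$ used there is that it is closed under ``duplicating'' a vertex together with its geometric object. First I would fix a $K_{2,2}$-free $\gig$ graph $G = (U \cup V, E)$ with $|U \cup V| = n$, $|E| = m$, together with a grid intersection representation $(\Hor, \Ver, \phi)$, where $\phi: U \mapsto \Hor$, $\phi: V \mapsto \Ver$, and $\Hor$, $\Ver$ are horizontal and vertical segments respectively.

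Next I would build $G' = (U' \cup V', E')$ by replacing each $u \in U$ with $k-1$ copies and each $v \in V$ with $k-1$ copies, so $|U'| = (k-1)|U|$, $|V'| = (k-1)|V|$, and $|U' \cup V'| = (k-1)n$. Geometrically, for each horizontal segment $\phi(u)$ I would place $k-1$ horizontal segments that are slightly perturbed (e.g.\ nudged in the $y$-coordinate) but still pairwise ``equivalent'' in the sense that a copy of $\phi(u)$ intersects a copy of $\phi(v)$ if and only if $\phi(u)$ intersected $\phi(v)$ in $G$; the same for vertical segments in $\Ver$. Concretely, one can take the $k-1$ copies of a horizontal segment $[a,b] \times \{c\}$ to be $[a,b] \times \{c + i\varepsilon\}$ for $i = 0,\dots,k-2$ and a small $\varepsilon$, and dually widen the vertical segments' $y$-extents by $(k-1)\varepsilon$ at top and bottom so the intersection pattern is preserved; since $\varepsilon$ can be chosen arbitrarily small relative to the smallest gap in the original representation, no new incidences are created. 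This gives a valid $\gig$ representation of $G'$, and each copy of $u$ has degree $(k-1)|N_G(u)|$, so $|E'| = (k-1)^2 m$ by summing over $U$.

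Finally I would verify that $G'$ is $K_{k,k}$-free, exactly as in \cref{lem:lb_duplication}: if $S_U \subseteq U'$ and $S_V \subseteq V'$ with $|S_U| = |S_V| = k$ induced a biclique, then by pigeonhole $S_U$ contains copies of at least two distinct original vertices $u, u^* \in U$ and $S_V$ copies of at least two distinct $v, v^* \in V$; since an edge between a copy of a vertex and a copy of another exists in $G'$ only when the corresponding edge existed in $G$, the vertices $u, u^*, v, v^*$ would form a $K_{2,2}$ in $G$, a contradiction.

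I do not expect a real obstacle here; the one point requiring a sentence of care (and the only place the argument differs from the $\prig$ case, where duplicating a point is trivial) is checking that the $k-1$ copies of a segment can be realized as genuine distinct segments while preserving the intersection graph --- this is handled by the small-perturbation argument above, using that a finite representation has a positive minimum separation. Since the statement only asserts existence, an alternative and even cleaner route is to allow degenerate (coincident) copies, which preserves the intersection pattern trivially; the biclique-freeness argument is unaffected.
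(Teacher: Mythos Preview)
Your proposal is correct and matches the paper's approach exactly: the paper's proof of this corollary is literally the one-line remark ``Similarly, we can construct copies of other geometric intersection graphs,'' i.e., the duplication argument of \cref{lem:lb_duplication} carried over to $\gig$. Your extra care about realizing the $k-1$ segment copies geometrically (via small perturbations or, more simply, coincident copies) is more detail than the paper provides, and is fine.
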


\begin{lemma} \label{lem: chain lower bound}
 $Z_{\chain}(m, n;k) \geq (m+ n)(k-1) - O(k^2)$. 
\end{lemma}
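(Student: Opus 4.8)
The goal is to construct a $\chain$ graph (intersection graph of rightward rays and points on a line) on $m$ nodes on one side and $n$ nodes on the other, which is $K_{k,k}$-free and has roughly $(m+n)(k-1)$ edges. The plan is to build a concrete such graph by a "staircase"-type construction and then count its edges and verify the biclique-freeness.

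Concretely, I would place the $n$ points at positions $1, 2, \ldots, n$ on the real line, and choose the $m$ rays to be $[a_i, +\infty)$ with carefully chosen left endpoints $a_i$. A ray $[a,\infty)$ is adjacent to exactly those points $\geq a$, so the neighborhood of a ray is always a \emph{suffix} of the point set; equivalently this is a "staircase" bipartite graph. To make it $K_{k,k}$-free while maximizing edges, I would take the left endpoints to come in groups: use $k-1$ rays with left endpoint just after point $n - (k-1) + 1$ (so they each see roughly $k-1$ points near the right end), then $k-1$ rays seeing the next block of $k-1$ points together with everything to the right, and so on — a "block staircase" with block size $k-1$. With $m$ rays arranged this way, a ray in the $j$-th block from the right sees about $j(k-1)$ points, so the total edge count is dominated by the contribution of the shortest rays plus a telescoping sum; I would tune the construction so that every point has degree about $k-1$ except for $O(k)$ points near the right end (which have high degree), and symmetrically every ray has degree a multiple of $k-1$, giving $|E| \geq n(k-1) + m(k-1) - O(k^2)$ after accounting for the overlap/boundary corrections. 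The $O(k^2)$ slack absorbs the $O(k)$ boundary points times their $O(k)$ degrees.

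For the $K_{k,k}$-freeness: since every ray's neighborhood is a suffix $\{t, t+1, \ldots, n\}$ of the points, any set of $k$ rays has a common neighborhood equal to the suffix determined by the rightmost (largest) left endpoint among them, i.e. the \emph{smallest} common neighborhood is controlled by the "shortest" of the $k$ chosen rays. By the block structure, any $k$ rays must include two rays from different blocks (since each block has only $k-1$ rays), hence the shortest of them lies in some block that sees at most $(k-1) \cdot (\text{its block index})$ points — but more to the point, if we want $k$ common neighbors we would need $k$ rays all whose neighborhoods contain a fixed set of $k$ points, and one checks from the block sizes that at most $k-1$ rays can simultaneously avoid "cutting into" any given block of $k-1$ consecutive points, forcing the common neighborhood of any $k$ rays to have size $\leq k-1$. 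So no $K_{k,k}$.

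I expect the main obstacle to be bookkeeping: getting the additive error term down to $O(k^2)$ rather than something larger, and making the two-sided degree count (both points and rays contributing $\approx k-1$ each) come out cleanly without double-counting the edges. This requires choosing $m$ and the block count compatibly (e.g. $m$ a multiple of $k-1$, or rounding with $O(k)$ loss) and being careful that the "high-degree" region near point $n$ is exactly where the $O(k^2)$ correction is spent. The biclique-freeness argument itself is a short combinatorial observation once the suffix/block structure is set up. One could alternatively phrase the construction directly as a $0/1$ staircase matrix with $k-1$ ones per row and per column away from the corner and cite that such a "banded staircase" is $K_{k,k}$-free, but writing it in the ray-point language keeps it manifestly in $\chain$.
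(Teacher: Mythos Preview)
Your block-staircase construction is \emph{not} $K_{k,k}$-free once you have more than one block of ``short'' rays, so the biclique-freeness argument breaks down. Concretely: take all $k-1$ rays in your second block together with any one ray from the third block. These $k$ rays have nested suffix-neighborhoods, and the smallest of them (a block-2 ray) already sees $2(k-1) \geq k$ points. Hence these $k$ rays have $\geq k$ common neighbors, giving a $K_{k,k}$. Your sentence ``at most $k-1$ rays can simultaneously avoid cutting into any given block of $k-1$ consecutive points'' is exactly where the error lies: the rightmost $k$ points are contained in \emph{every} ray outside block~1, and there are $m-(k-1)$ such rays.

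The paper's construction sidesteps this by using only \emph{two} ray lengths rather than a full staircase: $k-1$ rays start at coordinate~$0$ (hence contain all $m$ points), and the remaining $n-(k-1)$ rays all start at coordinate $m-k+1.5$ (hence each contains exactly the last $k-1$ points). Now any collection of $k$ rays must include at least one short ray, and a short ray's neighborhood has size $k-1$, so no $K_{k,k}$ can appear. The edge count is immediate:
\[
(k-1)\cdot m \;+\; (n-k+1)\cdot(k-1) \;=\; (m+n)(k-1) - (k-1)^2,
\]
which is $(m+n)(k-1) - O(k^2)$ as required. Note that in this construction most rays have degree exactly $k-1$ and most points have degree exactly $k-1$, matching the intuition you stated --- but the way to realize it is with two levels, not a staircase of many levels.
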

\begin{proof}
 Consider a chain graph $G= (U \cup V, E)$ where $V$ corresponds to the rightward rays $\{r_1, r_2, \ldots, r_{n}\}$ and $U$ to the points $\{p_1, p_2, \ldots, p_{m}\}$. 
Let all the points be placed on the real line such that each point $p_i$ is at coordinate $i$. 
The rays $r_1,\ldots, r_{k-1}$ start at the coordinate $0$, which contains all the points. 
The remaining rays $r_k, \ldots, r_{n}$ start at coordinate $(m-k+1.5)$, so they contain $k-1$ points (see~\Cref{fig:CHAIN_LB_NEW}).
Clearly, the graph does not contain a biclique $K_{k,k}$. The number of edges is $n(k-1) + (m-k+1)(k-1) = (n+m)(k-1)-(k-1)^2$.    
\end{proof}

\begin{figure}[h]
    \centering
    \includegraphics[scale=0.18]{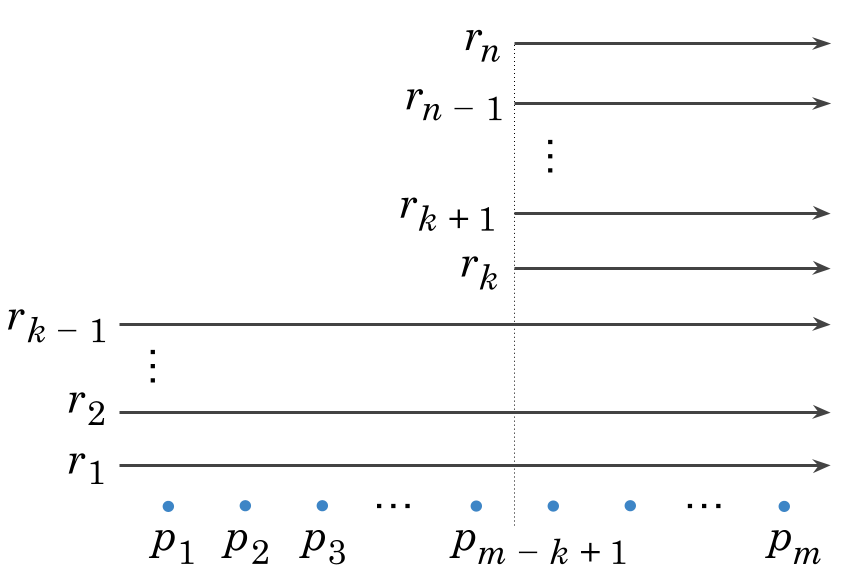}
    \caption{A point-ray representation of the lower bound graph }
    \label{fig:CHAIN_LB_NEW}
\end{figure}

\subsection{Unit Grid Intersection Graphs} \label{sec:gig LB}

Although we do not know the correct bound for the grid intersection graphs, this section presents a lower bound construction that suggests that the leading constant of $Z(n;k)$ is likely at least $3$. This contrasts grid intersection graphs with chordal bigraphs (where the tight bound has a leading constant $2$). 

Our lower bound holds for the special case where all segments have unit length; such an intersection bigraph is called a Unit Grid Intersection Graph ($\ugig$). 

\begin{theorem} 
We have $Z(n;k) \geq (k-1)3n - O(k\sqrt{kn})$ for unit grid intersection graphs.   
\end{theorem}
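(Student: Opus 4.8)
The plan is to construct an explicit $\ugig$ on roughly $n$ horizontal and $n$ vertical unit segments that is $K_{k,k}$-free but has about $3(k-1)n$ edges. The guiding intuition is that in the tight chordal/chain bounds the extremal graphs are ``$2$-degenerate-like'', whereas a grid of unit segments lets each segment genuinely interact with neighbors on \emph{three} sides (say up, down, and one horizontal direction), which is exactly where the extra factor should come from. Concretely, I would lay out the vertical segments in a $\sqrt{n}\times\sqrt{n}$ grid pattern and interleave the horizontal segments so that a typical horizontal segment crosses a controlled ``bundle'' of vertical segments, and vice versa, with the crossing pattern chosen so that no $k$ horizontal segments share $k$ common vertical neighbors.

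First I would fix the coordinate layout: place $\sqrt{n/(k-1)}$ columns and within each column stack vertical unit segments with small vertical offsets, grouped into blocks of size $k-1$; do the symmetric thing for horizontal segments. The key design choice is to make each horizontal segment span exactly (a bit more than) the width of one block of columns, and shift consecutive horizontal segments so that the set of columns a horizontal segment meets ``slides'' by one block as we move up — this is the standard trick (as in the chain lower bound of \cref{lem: chain lower bound}, scaled up in two directions) that caps common neighborhoods. Then I would count: each horizontal segment meets roughly $3(k-1)$ vertical segments (one block on each side plus the block it sits in, using unit length to reach up and down), giving $\approx 3(k-1)n$ edges after accounting for the $O(\sqrt{n})$ boundary segments that meet fewer, which explains the $-O(k\sqrt{kn})$ error term.

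The main obstacle — and the step I expect to require the most care — is verifying $K_{k,k}$-freeness simultaneously in both directions while keeping unit length. With unit segments one cannot make a single segment reach arbitrarily far, so the ``sliding window'' argument that kills bicliques in one axis (distinct horizontal segments have $x$-projections that are distinct unit intervals, hence few can be pairwise close enough to share many columns) must be balanced against the need for each segment to hit three neighboring blocks; choosing the block size $k-1$, the offsets, and the exact overlap so that \emph{any} $k$ horizontal segments already fail to have $k$ common vertical neighbors is the delicate combinatorial core. Once the layout parameters are pinned down, the edge count and the biclique-freeness both reduce to elementary interval-overlap counting, and the $O(k\sqrt{kn})$ loss is just the perimeter of the $\sqrt{n}\times\sqrt{n}$ arrangement times $O(k)$.
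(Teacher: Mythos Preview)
Your high-level picture --- a roughly $\sqrt{n}\times\sqrt{n}$ arrangement of unit segments with typical degree about $3(k-1)$ and an $O(\sqrt{\cdot})$ boundary loss --- is the right shape, and your ``blocks of size $k-1$'' is exactly the right parameter. But you explicitly leave the $K_{k,k}$-freeness as an unresolved obstacle, and the sliding/offset mechanism you propose to handle it is both the source of your difficulty and unnecessary.

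The paper decouples the two concerns you are trying to handle simultaneously. It first builds a \emph{fixed} $K_{2,2}$-free unit grid intersection graph on $4t^{2}+4t^{2}$ segments with $12t^{2}-O(t)$ edges (giving $Z(n;2)\ge 3n-O(\sqrt{n})$), and only afterwards passes to general $k$ by the generic duplication lemma (\cref{cor:lb_duplication_GIG}): replace every segment by $k-1$ \emph{coincident} copies. This multiplies each side by $k-1$ and the edge count by $(k-1)^{2}$, and any $K_{k,k}$ in the blown-up graph must use copies of at least two distinct originals on each side, hence forces a $K_{2,2}$ in the quotient. That is your ``blocks of size $k-1$'' with \emph{exact} copies instead of offset ones --- and with exact copies the $K_{k,k}$-freeness is inherited for free, so no sliding-window bookkeeping is needed at all.

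What your plan genuinely lacks, then, is the base case: a concrete unit-segment arrangement that is $K_{2,2}$-free with average degree close to~$3$. The paper gives an explicit doubly-periodic pattern (two interleaved families of horizontal segments at alternating heights, four staggered families of vertical segments) and verifies $K_{2,2}$-freeness by a geometric observation rather than a neighborhood computation: a $K_{2,2}$ in a grid representation bounds a rectangle, and when all segments have equal length such a rectangle must contain a rectangular \emph{face}; the pattern is designed so that every bounded face is non-rectangular. This replaces the ``delicate combinatorial core'' you anticipate with a picture check. Your sliding approach might be salvageable, but it is fighting the unit-length constraint instead of exploiting it.
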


First, we construct the graph $G'$ for $Z(n;2) \geq 3n - 2\sqrt{n}$, and then we apply $\cref{cor:lb_duplication_GIG}$ to get a graph for $Z(n;k)$. Note that since the method used in the Lemma is duplication, then the new graph created in the proof will also produce a $\ugig$.
For any $t \in \mathbb{N}$ we can construct a $\ugig$ graph as follows (\Cref{fig:GIG_LB}).

\begin{algorithm}[H]
\DontPrintSemicolon
\caption{GIG graph construction}
\label{alg:GIG_LB}
    \textbf{Create} horizontal segments $\Hor$: \ 
    \Indp
    \ForEach {$i\in [0, t-1], j \in [0, 2t-1]$}{
        add $[8i, 8i+7]$ at $y$-coordinate $4j+1$ to $\Hor$\;
        add $[8i-4, 8i+3]$ at $y$-coordinate $4j+3$ to $\Hor$\;
    }
    \Indm
    \textbf{Create} vertical segments $\Ver$: \ 
    \Indp
    \ForEach {$i, j\in [0, t-1]$}{
        add $[8i+2, 8i+8]$ at $x$-coordinate $8j + 1$ to $\Ver$\;
        add $[8i-2, 8i+4]$ at $x$-coordinate $8j + 2$ to $\Ver$\;
        add $[8i, 8i+6]$ at $x$-coordinate $8j + 5$ to $\Ver$\;
        add $[8i+4, 8i+10]$ at $x$-coordinate $8j + 6$ to $\Ver$\;
    }
    \Indm
\end{algorithm}

\begin{figure}[h]
    \centering
    \includegraphics[scale=0.4]{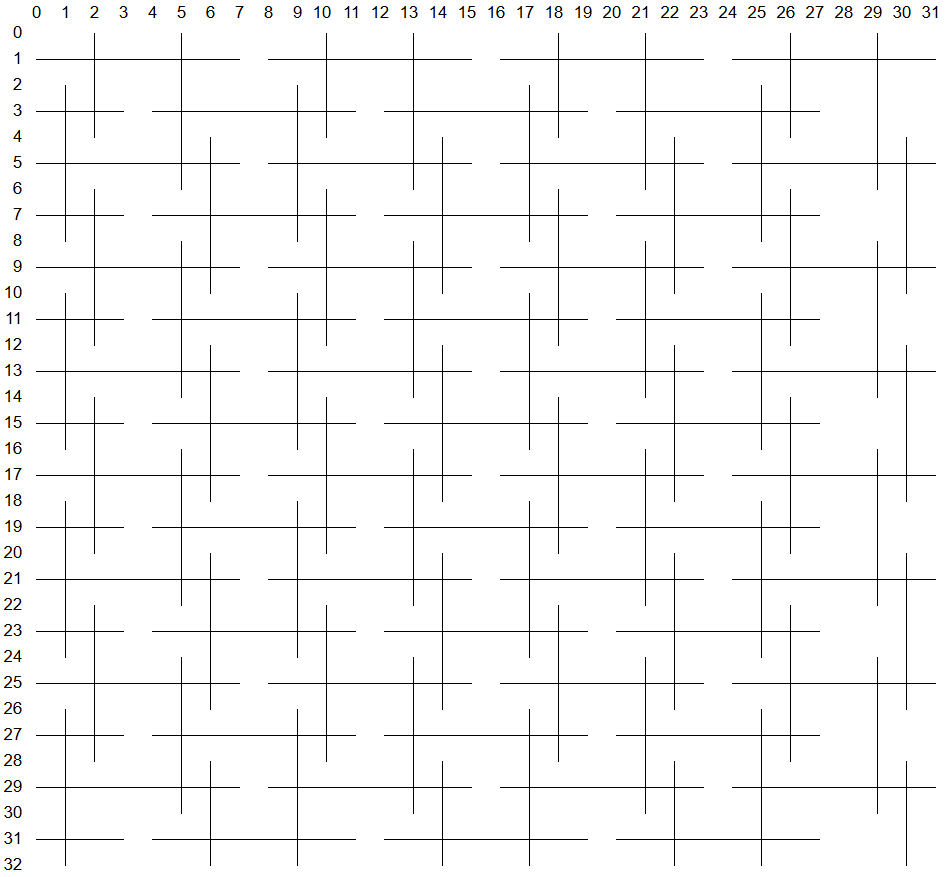}
    \caption{The segments created in \Cref{alg:GIG_LB} for $t = 3$. Only the area of the graph with intersections is depicted. (Some segments also have negative coordinates.) }
    \label{fig:GIG_LB}
\end{figure}

\begin{figure}[h]
    \centering
    \includegraphics[scale=0.16]{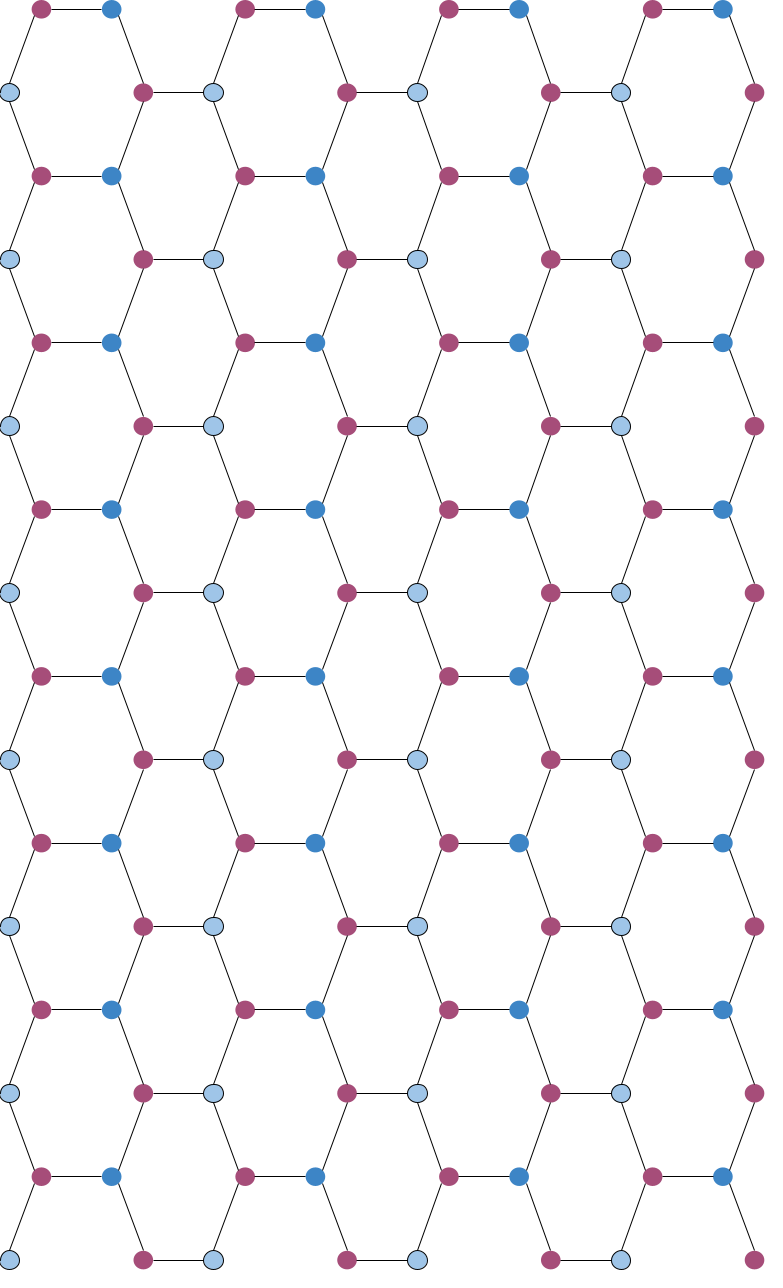}
    \caption{A planar representation of the graph $G'$, where the first and second types of horizontal segments are depicted by blue and light blue nodes, respectively, and red nodes depict vertical segments. }
    \label{fig:GIG_LB_planar}
\end{figure}

Note that $|\Hor| = t\cdot 2t + t\cdot 2t = 4t^2$ and $|\Ver| = t^2 + t^2 + t^2 + t^2 = 4t^2$. To count the number of edges, we count the edges incident to horizontal segments $\Sum_{\hor \in \Hor} \deg_{G'}(\hor)$. Consider the segments that are added by the first line in the algorithm. For these segments, the degree for $j = 0$ (y-coordinate $1$) is $2$, and for all others, the degree is $3$ (\Cref{fig:GIG_LB}). The number of edges incident to the first type of horizontal segments is $3\cdot 2t^2-t$. Now, consider the segments added by the second line in the algorithm. For these segments, all cases where $i = 0$ have one degree less than $3$, as well as the cases where $j = 2t-1$ (the case where both are true has degree $3-2 = 1$, see \Cref{fig:GIG_LB}). In total,  the number of edges incident to these segments is therefore $3\cdot 2t^2 - 2t -t$.
The constructed graph has $|\Hor| = |\Ver| = 4t^2$ and the number of edges is $\Sum_{\hor \in \Hor} \deg_{G'}(\hor) = 12t^2 - 4t$.

\begin{figure}[h]
    \centering
    \includegraphics[scale=0.2]{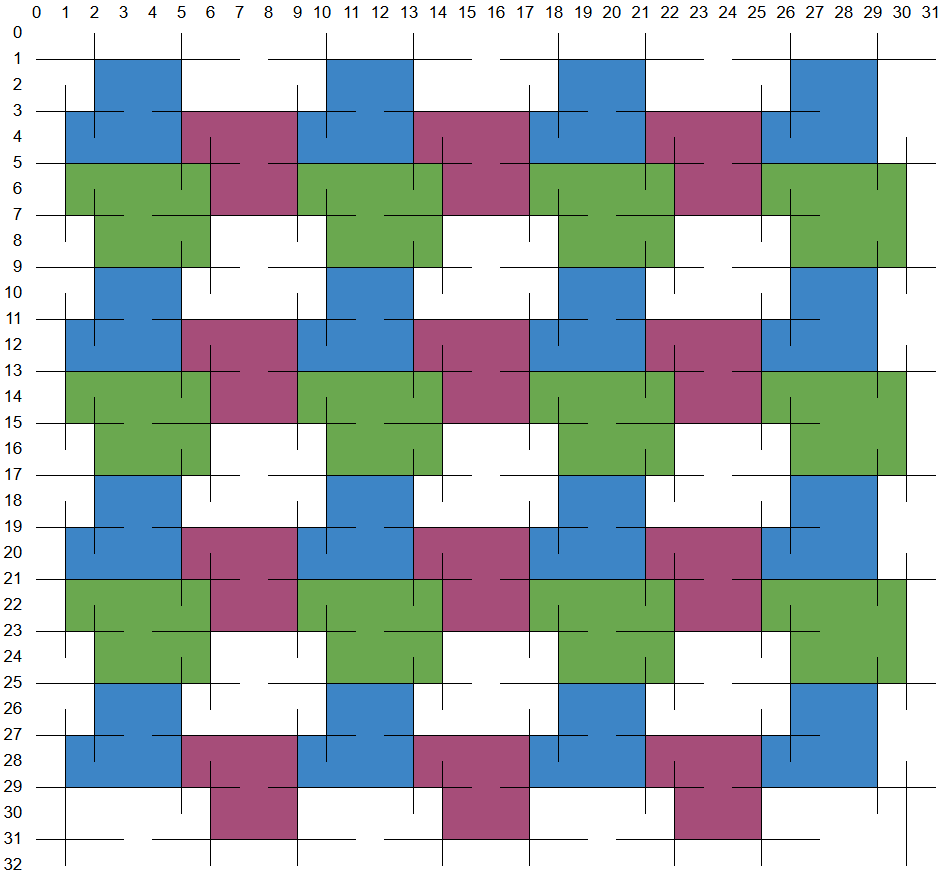}
    \caption{Areas bordered by segments are colored white, blue, green, or red. }
    \label{fig:GIG_LB_colored}
\end{figure}

Let \textbf{faces} be the areas bordered by segments.
To see that the graph is $K_{2, 2}$-free, note that a $\gig$ representation of a $K_{2, 2}$ would form a rectangle. Since segments in each partition have identical lengths, a rectangle that contains only non-rectangle faces cannot exist. Since none of the faces in the graph are rectangles (\Cref{fig:GIG_LB_colored}), the graph is $K_{2, 2}$-free. This finishes the proof for $Z(n;2) \geq 3n - 2\sqrt n$.

Next, we use \cref{cor:lb_duplication_GIG}, to create a $K_{k,k}$-free graph $G = (U \cup V, E)$, where $|U| = |V| = 4(k-1)t^2$ and $|E| = (k-1)^2 (12t^2 - 4t)$.
Using the value $n = 4(k-1)t^2$, we get $|E| >  (k-1)(3n - 2\sqrt{(k-1)n} )$. 

\end{document}